\documentclass[10pt]{article}
\usepackage{amsfonts,amsmath,amsthm,amssymb,color}
\usepackage{fullpage}

\numberwithin{equation}{section}

\newtheorem{thm}{Theorem}[section]
\newtheorem*{thm*}{Theorem}
\newtheorem{prop}[thm]{Proposition}
\newtheorem{question}[thm]{Question}
\newtheorem*{question*}{Question}

\newtheorem{cor}[thm]{Corollary}

\newtheorem{defin}[thm]{Definition}

\newtheorem{lemma}[thm]{Lemma}

\newtheorem{remark}[thm]{Remark}
\newtheorem*{remark*}{Remark}

\newcommand{\ip}[1]{\langle #1 \rangle}


\long\def\symbolfootnote[#1]#2{\begingroup%
\def\thefootnote{\fnsymbol{footnote}}\footnote[#1]{#2}\endgroup}

\begin{document}

\title{Concerning the existence of Einstein and Ricci soliton metrics on solvable Lie groups}
\author{M.Jablonski} \date{}
\maketitle

    \begin{abstract} In this work we investigate solvable and nilpotent Lie groups with special metrics.  The metrics of interest are left-invariant Einstein and algebraic Ricci soliton metrics.  Our main result shows that the existence of a such a metric is intrinsic to the underlying Lie algebra.  More precisely, we show how one may determine the existence of such a metric by analyzing algebraic properties of the Lie algebra in question and infinitesimal deformations of any initial metric.  
    
    Our second main result concerns the isometry groups of such distinguished metrics.  Among the completely solvable unimodular Lie groups (this includes nilpotent groups), if the Lie group admits such a metric, we show that the isometry group of this special metric is maximal among all isometry groups of left-invariant metrics.  We finish with a similar result for locally left-invariant metrics on compact nilmanifolds.\end{abstract}

\section{Introduction}

Our primary interest in this work is (left-invariant) Einstein metrics on non-compact Lie groups.  All known examples of such metrics occur on solvable Lie groups. In fact, all known examples of non-compact homogeneous Einstein metrics are isometric to solvable Lie groups with left-invariant metrics; this is the content of the well-known Alekseevskii conjecture which has been verified in dimensions 4 and 5 \cite{Jensen:HomogEinsteinSpacesofDim4,Nikonorov:NoncompactHomogEinstein5manifolds}.  We restrict ourselves to this class of Lie groups and ask when such a group admits an Einstein metric.

The answer in the compact setting is well-known.  If a compact group $G$ admits an Einstein metric, then either
    \begin{quote}
        (i) $G$ is a torus (zero scalar curvature) or\\
        (ii) $G$ is a compact semi-simple Lie group (positive scalar curvature).
    \end{quote}
The first case of Ricci flat follows from a general result of Alekseevskii-Kimelfeld where it is shown that any homogeneous Ricci flat space is actually flat  \cite{AlekseevskiiKimelfeld:StructureOfHomogRiemSpacesWithZeroRicciCurv}.  For the positive scalar curvature  case, such metrics are characterized as critical points of the total scalar curvature functional \cite{Jensen}.  On compact semi-simple groups, Einstein metrics are not unique and most groups admit at least 2 such metrics (this is in sharp contrast to the solvable setting, where Einstein metrics are unique up to isometry and scaling).

We observe that the existence of an Einstein metric on a compact Lie group is completely determined by the underlying Lie algebra.  The flat case corresponds to abelian Lie algebras.  The positive case is characterized as follows.  Let $\mathfrak g$ denote the Lie algebra of $G$, then $G$ is compact semi-simple if and only if the Killing form $B(X,Y) = tr (ad\ X \circ ad\ Y)$, with $X,Y\in \mathfrak g$,  is negative definite.

Our work is motivated by, and seeks to answer, the following questions.

\begin{question}\label{question: when does Einstein or solsol exist}Given a solvable Lie group, how can one determine if it admits an Einstein or solsoliton metric?
\end{question}

\begin{question}\label{question: how to find Einstein or solsol}If a solvable Lie group is known to admit an Einstein or solsoliton metric, how does one find it?
\end{question}

As the curvature tensors of left-invariant metrics are left-invariant, the above questions reduce to studying inner products on a given Lie algebra.  More precisely, consider a solvable Lie algebra $\mathfrak g$ with corresponding Lie group $G$.  Let $\ip{\cdot, \cdot }$ be an inner product on $\mathfrak g$ with corresponding left-invariant metric on $G$.  The Ricci curvature of $(G,\ip{\cdot,\cdot})$ is completely determined by its values on $\mathfrak g$ (by left-invariance) and is given by the formula
    $$ric(X,X)= -\frac{1}{2}\sum_i | [X,X_i] |^2 -\frac{1}{2}\sum_i \ip{[X,[X,X_i]],X_i} +\frac{1}{4}\sum_{i,j} \ip{[X_i,X_j],X}^2 -\ip{[Z,X],X}   $$
where $\{X_i\}$ is an orthonormal basis of $\mathfrak g$ and $Z\in \mathfrak g$ is the unique vector satisfying $\ip{Z,X}=tr(ad\ X)$.
Observe that the Ricci curvature is completely determined by the Lie bracket and inner product on $\mathfrak g$.  Denoting the $(1,1)$-Ricci tensor by $Ric$, Question \ref{question: when does Einstein or solsol exist} may be rephrased as follows.
\begin{question*}
    When  does there exist an inner product $\ip{\cdot, \cdot}$ on $\mathfrak g$, such that
    \begin{equation}\label{eqn: definition of solsoliton}
        Ric = cId + D
    \end{equation}
    for some $c\in \mathbb R$ and some $D\in Der(\mathfrak g)$?
\end{question*}
Here $Der(\mathfrak g)$ denotes the algebra of derivations of $\mathfrak g$.  Terminology: when $D=0$, the metric is called an Einstein metric;  when $D\not = 0$, the metric is  called a solsoliton.  Solsolitons are algebraic examples of Ricci solitons, see Section \ref{sec: Riem Lie groups}.

Our main result shows that a definite answer to Question \ref{question: when does Einstein or solsol exist} can be obtained by analyzing only local data: algebraic information about the underlying Lie algebra and infinitesimal deformations of any metric; see Section \ref{sec: alg for Einstein metric} for complete details and the procedure referenced by the following theorem.

\bigskip
\noindent
\textbf{Theorem \ref{thm: existence of Einstein of solvable}.} \textit{ Let $G$ be a solvable Lie group with Lie algebra $\mathfrak g$.  The existence of a left-invariant Einstein metric on $G$ can be determined by analyzing the following: 1) adjoint action of $\mathfrak g$ on itself, 2) the commutator subalgebra $\mathfrak n = [\mathfrak g,\mathfrak g]$, 
and 3) infinitesimal deformations of any initial metric on $\mathfrak n$.}

\begin{remark*} The existence of an Einstein metric on a solvable Lie group is now a local question. Similarly, one can formulate the question of existence of a solsoliton in terms of local data.
\end{remark*}

In general, the existence of an Einstein or Ricci soliton metric is not a local question.  It might appear at first glance that the existence of left-invariant Einstein metrics on Lie groups is a local question since the verification of Equation \ref{eqn: definition of solsoliton} uses only the inner product and Lie bracket on $\mathfrak g$.  However, asking if a Lie group admits such a metric amounts to asking if there exists a zero of the function $|| Ric_g - \frac{sc(g)}{n}Id||^2$ on the open set of inner products.  It is not clear if this is a local question for non-solvable Lie groups; e.g., there does not exist a solution when the Lie algebra is $\mathfrak{sl}_2\mathbb R$.

In the setting of compact homogeneous spaces $G/H$, the Einstein question has received a great deal of attention and there are some partial results on the existence of such metrics.  For example, if $G$ is a compact semi-simple group and $H$ is a maximal connected subgroup of $G$, then $G/H$ admits a $G$-invariant Einstein metric \cite{Wang-Ziller:ExistenceAndNonexistenceOfHomogEinstein}.  However, there exist many examples of homogeneous spaces $G/H$ which don't admit $G$-invariant Einstein metrics.  Presently, there are not any general, local conditions that guarantee/exclude the existence of such metrics on compact homogeneous spaces; see \cite{Bohm-Wang-Ziller:AVariationalApproachforCompactHomogEinsteinMflds} for the current state of research.

\bigskip

Our work builds on the strong structural results of Heber \cite{Heber} and Lauret \cite{LauretStandard}.  These works take the  first step in reducing the problem on the solvable group to a smaller solvable group, a one dimensional extension of a nilpotent group.  This smaller solvable Lie group admits an Einstein metric if and only if its nilradical admits a so-called nilsoliton metric and the underlying Lie algebra is the extension of the nilradical by a so-called pre-Einstein derivation.  Reducing the problem to analyzing the nilradical is an algebra problem.

To study the nilradical we build on the work of Nikolayevsky \cite{Nikolayevsky:EinsteinSolvmanifoldsandPreEinsteinDerivation}.  Using a combination of measuring algebraic information and infinitesimal deformations of metrics on the nilradical, we translate the Einstein problem into a local problem.  (While we could skip this analysis on the nilradical and couple our techniques directly with the work of Heber \cite[Section 6]{Heber}, we present our results in the given framework as these methods extend directly to solsoliton and nilsoliton metrics.  See Section \ref{sec: alg for nilsoliton} for more details.)

\bigskip

Our second main result is an algebraic decomposition theorem for solvable groups admitting Einstein metrics.  If one were to classify the solvable groups admitting Einstein or solsoliton metrics,  one would want to construct such groups from basic building blocks.
The question of existence of an Einstein or solsoliton metric can be reduced to the case that the underlying Lie algebra is indecomposable.

\bigskip

\noindent
\textbf{Theorem \ref{thm: solv admit solsoliton reduces to irreducible components}} \textit{Let $G$ be a solvable Lie group whose Lie algebra $\mathfrak g = \mathfrak g_1 + \mathfrak g_2$ is a direct sum of ideals.  Then $G$ admits a non-flat solsoliton, resp. flat, metric if and only if both $G_1$ and $G_2$ admit non-flat solsoliton, resp. flat, metrics.}

\bigskip

\noindent
\textbf{Corollary \ref{cor: solv admit solsoliton reduces to irreducible components}} \textit{Let $G$ be a solvable Lie group whose Lie algebra $\mathfrak g = \mathfrak g_1 + \mathfrak g_2$ is a direct sum of ideals.  Then $G$ admits an Einstein metric if and only if both $G_1$ and $G_2$ admit Einstein metrics of the same sign.}

\begin{remark*} A similar decomposition result has appeared for nilsolitons and nilpotent Lie groups, see \cite{Nikolayevsky:EinsteinSolvmanifoldsandPreEinsteinDerivation} and \cite{Jablo:FinitenessTheorem-compatibleSubgroups}.
To our knowledge, the above algebraic decomposition theorem is the first of its kind for homogeneous Einstein spaces.  It would be interesting to know if there is a similar theorem in the compact setting; there are some partial results of B\"ohm in this direction \cite[Theorem B]{Bohm:HomogEinsteinMetricsAndSimplicialComplexes}.
\end{remark*}

In addition to providing a local formulation of the existence of such a metric on a solvable Lie group, we demonstrate how to recover such metrics by following two natural curves of metrics (see Proposition \ref{prop: finding Einstein via 2 curves}).  Using these curves, we demonstrate that solsolitons (when they exist) are the most symmetric metric on  completely solvable unimodular Lie groups (this class includes nilpotent Lie groups).

\bigskip
\noindent
\textbf{Theorem \ref{thm: comp solv unimod solsoliton have max isom grp}} \textit{Let $S$ be a completely solvable unimodular Lie group that admits a solsoliton metric.  Let $g$ be any left-invariant metric.  Then there exists a left-invariant soliton metric $g'$ such that $Isom(S,g) \subset Isom(S,g')$, as groups.}\bigskip

This result is extended to compact nilmanifolds with \textit{local nilsoliton metrics} in Theorem \ref{thm: isom grp of local nilsoliton is maximal}.

\bigskip

\textit{Table of Contents.} Section \ref{sec: Riem Lie groups} reviews information on Lie groups with left-invariant metrics.  Section \ref{sec: variety of Lie brackets} discusses the space of Lie brackets, moment maps, and distinguished orbits.  Section \ref{sec: solitons vs distinguished orbits} compares the existence of solsoliton metrics and distinguished orbits. Section \ref{sec: bracket flow} studies the bracket flow with applications to finding solsoliton metrics and comparisons of  isometry groups.  Section \ref{sec: compact quotients} states results on compact nilmanifolds. Section \ref{sec: stratifying V} discusses the stratification of the space of Lie brackets. Section \ref{sec: pre-einstein derivations} covers pre-Einstein derivations and the previous work of Nikolayevsky.  Sections \ref{sec: alg for nilsoliton} and \ref{sec: alg for Einstein metric} show the existence of nilsoliton and Einstein metrics are intrinsic to the underlying Lie algebra, respectively.

\section{Riemannian Lie groups}\label{sec: Riem Lie groups}
A Lie group $G$ is called a \textit{Riemannian Lie group} if it is endowed with a left-invariant metric.
The following question motivates much of our work.
    \begin{question} Among left-invariant metrics on a given Lie group, is there a canonical or preferred choice of metric?
    \end{question}
Special metrics are often characterized as those having good curvature properties or as solutions to some extremal problem.  For example, we are interested in metrics satisfying one or several of the following conditions
    \begin{quote} 1. Nice curvature properties or curvature tensor\\
        2.  Critical points of a Riemannian functional on the set of metrics\\
        3.  More generally, fixed points of a dynamical system\\
        4.  Large  group of isometries
    \end{quote}

Classically, spaces with constant sectional and Ricci curvature have been investigated as preferred metrics on a manifold; the later are known as \textit{Einstein metrics}.  We are interested in left-invariant Einstein metrics when they exist; however, many of our Lie groups are not able to admit left-invariant Einstein metrics.  For example, any non-abelian nilpotent Lie group cannot admit an Einstein metric \cite{Jensen:HomogEinsteinSpacesofDim4,Milnor:LeftInvMetricsonLieGroups} and many solvable Lie groups do not admit Einstein metrics (cf. Theorem \ref{thm: classification of solv admitting negative Einstein}).

Given that many of our Lie groups will not be able to admit an Einstein metric, we explore alternate metrics in search of one which is `distinguished' or preferred in some way.  One natural generalization of the Einstein metric is the so-called \textit{Ricci soliton metric}.  A metric $g$ is called a Ricci soliton if there exists a complete vector field $X\in \mathfrak X(G)$ and constant $c\in \mathbb R$ such that
    $$ric_g = cg + \mathcal L_X g$$
where $\mathcal L_X$ is the Lie derivative generated by $X$.  These metrics arise as special solutions to the Ricci flow which are of the form $g(t) = c(t) \varphi^*(t)g$ where $c(t)$ is a real-valued function and $\varphi(t)$ is the 1-parameter group of diffeomorphisms that generates $X\in \mathfrak X(G)$.  Hence, Ricci solitons  can be viewed as generalized fixed points of the Ricci flow on the space of metrics modulo diffeomorphisms; see \cite{ChowKnopf} for an introduction to Ricci flow and Ricci solitons.

On a Riemannian Lie group $G$ there is a very natural kind of Ricci soliton, which we call an algebraic Ricci soliton, or algebraic soliton for short.  A left-invariant metric $g$ is called an \textit{algebraic soliton} if the Ricci tensor (evaluated at the identity $e\in G$) is of the form
    $$Ric = cId +D$$
for some $D\in Der(\mathfrak g)$.  In the above equation, we have written the Ricci $(1,1)$-tensor $Ric$ instead of the $(2,0)$-tensor $ric$ as in the definition of Ricci soliton above; this is done for ease of presentation.  The derivation $D$ generates the 1-parameter family of automorphisms $\varphi(t)=exp(tD)$ which is the corresponding family of diffeomorphisms from the definition of Ricci soliton.

\begin{remark*}  1) By left-invariance of the metric on $G$, it suffices to only prescribed the value of $Ric$ at $e\in G$. 2) Presently, the only known examples of Ricci solitons on Riemannian Lie groups are algebraic solitons and these are only known to exist on solvable Lie groups.
\end{remark*}

When $G$ is a nilpotent group, such a metric is often called a \textit{nilsoliton} in the literature and if $G$ is solvable, such a metric has been called a \textit{solsoliton}.

\subsection*{Nilpotent Lie groups}

As nilpotent Lie groups cannot admit Einstein metrics, we will search for left-invariant Ricci soliton metrics on such spaces.  These metrics satisfy several of the criteria listed above for being a preferred metric.  Before stating the next theorem, we need some notation.  Given a nilpotent Lie group $G$, denote by $\mathcal M^G$ the set of left $G$-invariant metrics on $G$ (i.e. inner products on $\mathfrak g$).  Recall  that $Ric_h$ is left $G$-invariant for $h\in \mathcal M^G$.

\begin{thm}\cite{LauretNilsoliton,Jablo:RiemFunctionalOnNilpotent}
    \begin{enumerate}
    \item  (Algebro-analytic characterization)\\
    Every left-invariant Ricci soliton on a nilpotent Lie group $G$ is an algebraic soliton, that is,
        $$Ric = cId + D$$
        for some $D\in Der(\mathfrak g)$.
    \item (Variational characterization)\\
        A metric $g\in \mathcal M^G$ is a nilsoliton if and only if $g$ is a critical point of the functional
    $$F(h) = \frac{tr\ Ric_h^2}{sc(h)^2}$$
on $\mathcal M^G$, where $sc(h)$ denotes the (constant) scalar curvature of $(G,h)$. (This functional makes sense as a real-valued function by left-invariance of $Ric$.)
    \item (Uniqueness)\\
    If a nilsoliton exists on $G$, then it is unique (up to isometry) after prescribing the scalar curvature.
    \end{enumerate}
\end{thm}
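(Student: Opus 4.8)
The plan is to treat the three characterizations with a single underlying engine: the realization of the left-invariant metrics on a fixed nilpotent $G$ as one orbit of $GL(\mathfrak g)$ acting on the variety of Lie brackets, together with the identification of the Ricci operator with the associated moment map (the framework of Section \ref{sec: variety of Lie brackets}). I would fix an inner product on the underlying vector space and regard a change of metric as a change of the Lie bracket within the orbit $GL(\mathfrak g)\cdot\mu_0$, where $\mu_0$ is the original bracket. For nilpotent $G$ the scalar curvature satisfies $sc(\mu)=-\tfrac14\|\mu\|^2$, and $Ric_\mu$ is a universal quadratic expression in $\mu$ equal, up to a fixed constant, to the (unnormalized) moment map $m(\mu)$ of this action. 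Consequently $F(h)=tr\,Ric_h^2/sc(h)^2$ is, up to a constant, the squared norm $\|m\|^2$ of the moment map along the orbit, and all three statements reduce to assertions about $\|m\|^2$.

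For the variational characterization (part 2), I would compute the first variation of $F$ along orbit directions. The general theory of the moment map gives that $\mu$ is a critical point of $\|m\|^2$ restricted to its $GL(\mathfrak g)$-orbit precisely when $\pi(m(\mu))\mu$ is a scalar multiple of $\mu$, where $\pi$ is the derivative of the $GL(\mathfrak g)$-action on $V$. Since $Ric_\mu$ is a constant multiple of $m(\mu)$ and a direct computation gives $\pi(Id)\mu=-\mu$, this condition rearranges into $Ric=cId+D$ with $\pi(D)\mu=0$; as $\pi(D)\mu=0$ is exactly the derivation condition for a symmetric endomorphism, this yields $D\in Der(\mathfrak g)$. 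The only routine inputs are the first-variation formula for $Ric$ under a change of left-invariant metric and the identification of the tangent space to the orbit with the image of $\pi$.

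Uniqueness (part 3) then follows from the uniqueness theory for critical points of the moment-map norm (Kempf--Ness and Ness): on a single $GL(\mathfrak g)$-orbit the critical points of $\|m\|^2$ are minima and form a single orbit of the maximal compact subgroup $O(\mathfrak g)$. Because all left-invariant metrics on $G$ correspond to the one orbit $GL(\mathfrak g)\cdot\mu_0$, and the $O(\mathfrak g)$-action corresponds exactly to orthogonal changes of frame, i.e. to isometries fixing the identity, any two nilsolitons with the same scalar curvature must be isometric; rescaling absorbs the remaining one-parameter freedom.

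The genuine obstacle is the algebro-analytic characterization (part 1): promoting an a priori arbitrary Ricci soliton, with complete soliton field $X$ satisfying $ric_g=cg+\mathcal L_X g$, to an algebraic soliton with $D\in Der(\mathfrak g)$. Here the moment-map machinery is useless until one knows the soliton is algebraic, so I would instead invoke the rigidity of the isometry group of a nilmanifold, which is the semidirect product of $G$ with a group of orthogonal automorphisms. Since the Ricci flow preserves left-invariance, a self-similar solution $g(t)=c(t)\varphi_t^*g$ relates left-invariant metrics by the diffeomorphisms $\varphi_t$; by this rigidity the $\varphi_t$ may be taken to be automorphisms $\exp(tD)$, so that $X$ is generated by a derivation and $Ric=cId+D$. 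Controlling the soliton vector field and reducing its flow to a one-parameter group of automorphisms is the hard step; once this is done, parts 2 and 3 are purely moment-map theory.
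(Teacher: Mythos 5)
Your plans for parts (ii) and (iii) are sound, and they are essentially the route of the works this theorem is quoted from (the paper itself does not prove the theorem; it cites \cite{LauretNilsoliton,Jablo:RiemFunctionalOnNilpotent} and develops the same moment-map dictionary in Sections 3--4): for nilpotent $\mu$ one has $sc(\mu)=-\frac14||\mu||^2$ and $Ric_\mu$ proportional to the moment map, so $F$ is $||m||^2$ along the orbit; criticality is $\pi(m(\mu))\mu=r\mu$, and since $\pi(Id)\mu=-\mu$ this rearranges to $Ric-cId\in Der(\mathfrak g)$; uniqueness at fixed scalar curvature follows from the Kempf--Ness/Ness fact (Theorem \ref{thm: limit under grad flow} in the paper) that critical points of $||m||^2$ of a fixed norm inside one $GL(n,\mathbb R)$-orbit form a single $O(n)$-orbit, and $O(n)$-equivalence of brackets is exactly isometry of the simply connected groups.

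The genuine gap is in part (i), and it is exactly the gap the paper flags in the remark immediately following the theorem. The Gordon--Wilson/Wilson rigidity you invoke does not allow you to ``take the $\varphi_t$ to be automorphisms $\exp(tD)$.'' What it gives is the following: since the Ricci flow of a left-invariant metric stays left-invariant (by forward uniqueness for complete bounded-curvature solutions), each $\varphi_t$ is an isometry between two left-invariant metrics, hence decomposes as $\varphi_t = L_{a(t)}\circ \psi_t$ with $\psi_t\in Aut(N)$. The translation part generates right-invariant vector fields, which are Killing for left-invariant metrics and so drop out of $\mathcal L_X g$; the curve $\psi_t$ is \emph{not} a one-parameter subgroup, only a curve through the identity of $Aut(N)$, and its derivative is some $D\in Der(\mathfrak g)$. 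Differentiating the soliton equation then yields only
$$Ric = cId + \tfrac{1}{2}\left(D+D^t\right).$$
Because $Ric - cId$ is symmetric, the theorem requires the symmetric part $\frac12(D+D^t)$ to itself lie in $Der(\mathfrak g)$, and this does not follow from anything in your argument: for an arbitrary inner product, $Der(\mathfrak g)$ is not closed under transpose or symmetrization. This is precisely the gap in the original proof that the paper points out; closing it requires showing $D$ can be chosen with $D^t\in Der(\mathfrak g)$, which the paper attributes to forthcoming work of Lauret \cite{Lauret:personalcommunication}. So your plan for (i) stops exactly where the real difficulty begins, while (ii) and (iii) are correct and agree with the cited approach.
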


\begin{remark}The above results are primarily due to Lauret \cite{LauretNilsoliton}.  Part \textit{ii.} was originally proven where the functional considered was on the space of Lie brackets, this result has been extended to the setting of left-invariant Riemannian metrics in \cite{Jablo:RiemFunctionalOnNilpotent} where new convergence results are also obtained.\\
\indent
There is a small gap in the original proof of Part \textit{i.} above.  In that work, it is shown that if the metric is a soliton, then there exists a derivation $D\in Der(\mathfrak g)$ such that $Ric = cId + \frac{1}{2}(D+D^t)$.  The gap is fixable by showing that there exists such $D$ satisfying $D^t\in Der(\mathfrak g)$; this will appear in a future work of that author \cite{Lauret:personalcommunication}.
\end{remark}

Observe that $g\in \mathcal M^G$ is a nilsoliton if and only if $g$ is a critical point of the functional
    $$F(h) = tr\ Ric_h^2$$
along the set $\{ h\in \mathcal M^G\ | \ sc(h)=sc(g) \}$.  A similar functional can be studied on compact nilmanifolds, see Section \ref{sec: compact quotients}.\bigskip

In the sequel, we will see that nilsolitons have maximal isometry groups among all left-invariant metrics.  More precisely we have the following result, see Corollary \ref{cor: nilsoliton have max isom grp}.

\bigskip

\noindent \textbf{Corollary} \textit{Let $G$ be a nilpotent Lie group which admits a nilsoliton metric.  Let $g\in \mathcal M^G$ be any left-invariant metric on $G$.  Then there exists a nilsoliton $g'\in \mathcal M^G$ on $G$ such that $Isom (g) \subset Isom(g')$.}

\begin{remark} In this way, we see that nilsolitons are the most symmetric left-invariant metric on a nilpotent Lie group.  However, there are other non-soliton metrics which can have the same isometry group.  It would be interesting to know which other geometric properties these highly symmetric nilmanifolds share with nilsolitons.
\end{remark}

The property of being an Einstein nilradical is intrinsic to the underlying Lie algebra.  Moreover, one can verify this via an algorithm, see Theorem \ref{thm: existence of nilsoliton} and Section \ref{sec: alg for nilsoliton}.

\bigskip

\noindent
\textbf{Theorem} \textit{ Let $N$ be a nilpotent Lie group with Lie algebra $\mathfrak n$.
 The existence of a nilsoliton metric on a nilpotent Lie group $N$ 
 can be determined by analyzing  the derivation algebra $Der(\mathfrak n)$ and infinitesimal deformations of any initial metric on $\mathfrak n$.}

\subsection*{Solvable Lie groups}

The analysis of Riemannian solvable Lie groups splits into two distinct sets: unimodular and non-unimodular groups.  A Lie group $G$ is called \textit{unimodular} if $| det( Ad(g) )| =1 $ for all $g\in G$.  Notice, in particular, that $tr\ ad(X) = 0$ for $X\in \mathfrak g$ when $G$ is unimodular.  If $G$ is not unimodular it is called \textit{non-unimodular}.

Within both these classes of solvable Lie groups, we are interested in those which are completely solvable.  A solvable group $G$ is called \textit{completely solvable} if $ad(X):\mathfrak g \to \mathfrak g$ has only real eigenvalues, for all $X\in \mathfrak g$.  Observe that nilpotent Lie groups are unimodular completely solvable, with eigenvalues all zero.\bigskip

As (non-abelian) solvable Lie groups are non-compact, any left-invariant Einstein metric on such a group must have scalar curvature less than or equal to zero by Bonnet-Myers theorem.  We recall the following result (cf. \cite{Dotti:RicciCurvUnimodularSolv,AlekseevskiiKimelfeld:StructureOfHomogRiemSpacesWithZeroRicciCurv}).

\begin{thm}[Alekseevskii-Kimel'fel'd, Dotti]Let $G$ be a solvable Lie group with left-invariant Einstein metric.
    \begin{quote} 1.  The scalar curvature is negative if and only if $G$ is non-unimodular, and\\
                2.  The scalar curvature is zero if and only if $G$ is unimodular.
    \end{quote}
In the case of Ricci flat, the metric is actually flat (i.e., constant zero sectional curvature).
\end{thm}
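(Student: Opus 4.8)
The plan is to reduce everything to two implications and then read off both equivalences from the unimodular/non-unimodular dichotomy. First I would record that a connected solvable group carrying an Einstein metric has $sc \le 0$: if the Einstein constant were positive the Ricci tensor would be positive definite, so $G$ would be compact by Bonnet-Myers, but the only compact connected solvable groups are tori, which are flat. Thus it suffices to prove (a) if $G$ is non-unimodular then $sc < 0$, and (b) if $G$ is unimodular then $sc = 0$ and the metric is flat; the two stated equivalences then follow formally, since the classes are complementary (e.g. $sc = 0 \Rightarrow \neg(sc<0) \Rightarrow \neg(\text{non-unimodular}) \Rightarrow$ unimodular). Throughout I use the mean curvature vector $Z$ defined by $\ip{Z,X} = tr(ad\ X)$. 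Since $\mathfrak g$ is solvable, every element of $[\mathfrak g,\mathfrak g]$ is $ad$-nilpotent by Lie's theorem, so $tr(ad\ X) = 0$ on $[\mathfrak g,\mathfrak g]$ and hence $Z \perp [\mathfrak g,\mathfrak g]$; moreover $G$ is unimodular exactly when $Z = 0$.

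For the non-unimodular case I would evaluate the given Ricci formula on $Z$ itself. Because every bracket $[X_i,X_j]$ lies in $[\mathfrak g,\mathfrak g]$ and $Z \perp [\mathfrak g,\mathfrak g]$, the term $\frac14\sum_{i,j}\ip{[X_i,X_j],Z}^2$ vanishes, and $\ip{[Z,Z],Z}=0$ trivially; the two surviving terms $-\frac12\|ad\ Z\|^2 - \frac12 tr((ad\ Z)^2)$ combine, via the identity $\|A\|^2 + tr(A^2) = 2\|\frac12(A+A^t)\|^2$ with $A = ad\ Z$, into
$$ ric(Z,Z) = -\left\| \tfrac12\big( ad\ Z + (ad\ Z)^t \big) \right\|^2 \le 0. $$
The punchline is that this is \emph{strictly} negative when $Z \neq 0$: if the symmetric part vanished then $ad\ Z$ would be skew-symmetric, forcing $|Z|^2 = \ip{Z,Z} = tr(ad\ Z) = 0$, a contradiction. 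Hence $ric(Z,Z) < 0$, and since the metric is Einstein, $\frac{sc}{n}|Z|^2 = ric(Z,Z) < 0$ with $|Z|^2 > 0$ yields $sc < 0$. This part is clean and self-contained.

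The unimodular case is the main obstacle, and here the mean-curvature trick is vacuous since $Z = 0$. The difficulty is genuine: the universal cover of $E(2)$ is unimodular solvable and carries both flat and non-flat left-invariant metrics, so no uniform local positivity argument can succeed. The plan is to invoke Dotti's theorem, that a non-flat left-invariant metric on a unimodular solvable group has Ricci curvature which is neither positive- nor negative-semidefinite. An Einstein metric has $Ric = \frac{sc}{n}Id$, definite of a single sign (or zero), so Dotti's dichotomy forces the metric to be flat; in particular $Ric = 0$ and $sc = 0$. The easy half of Dotti's statement is visible from the specialized formula $ric(X,X) = -\|\frac12(ad\ X + (ad\ X)^t)\|^2 + \frac14\sum_{i,j}\ip{[X_i,X_j],X}^2$ valid when $Z=0$: any nonzero $X \in \mathfrak z(\mathfrak g)\cap[\mathfrak g,\mathfrak g]$ has $ad\ X = 0$ and nonzero bracket-projection, hence $ric(X,X) > 0$; the subtle part, precisely the content of Dotti's argument, is the case where that intersection is trivial, as for $E(2)$. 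Finally, the statement that a Ricci-flat homogeneous metric is actually flat is the Alekseevskii-Kimel'fel'd rigidity theorem, which I would cite to close both the unimodular case and the last sentence.
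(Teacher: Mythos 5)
Your proposal is correct, but it is worth knowing that the paper never proves this theorem: it is recalled as background, with the preliminary inequality $sc \le 0$ attributed to Bonnet--Myers (non-abelian solvable groups being non-compact) and the theorem itself cited to Dotti and to Alekseevskii--Kimel'fel'd. Measured against that, your write-up does strictly more. The non-unimodular half is proved honestly and cleanly: evaluating the paper's Ricci formula at the mean curvature vector $Z$, using $Z \perp [\mathfrak g,\mathfrak g]$ to kill the positive term, and the identity $\|A\|^2 + tr(A^2) = 2\|\tfrac12(A+A^t)\|^2$ to get $ric(Z,Z) = -\|S(ad\ Z)\|^2$, with strict negativity because $S(ad\ Z)=0$ would force $|Z|^2 = tr(ad\ Z) = 0$ --- this computation is sound, and your formal reduction of both stated equivalences to the two implications (non-unimodular $\Rightarrow sc<0$, unimodular $\Rightarrow$ flat) is airtight since the two classes are complementary. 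For the unimodular half and the closing rigidity assertion you cite exactly the two results the paper cites (Dotti's theorem that a semidefinite-Ricci left-invariant metric on a unimodular solvable group is flat, and the Alekseevskii--Kimel'fel'd theorem that homogeneous Ricci-flat spaces are flat); note that even if one only has the one-sided form of Dotti's theorem (Ricci negative semidefinite implies flat), your Bonnet--Myers step already supplies $sc \le 0$, so the application goes through. Your observation that the unimodular case cannot be settled by any pointwise curvature inequality --- the universal cover of $E(2)$ is unimodular solvable and carries both flat and non-flat left-invariant metrics --- correctly locates where the genuine depth lies. In short: where the paper cites, you prove what can be proved cheaply and defer to the literature precisely at the two points where that is unavoidable.
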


In \cite{AlekseevskiiKimelfeld:StructureOfHomogRiemSpacesWithZeroRicciCurv} it is actually shown that any homogeneous Ricci flat space must be flat.  Lie groups with flat left-invariant metrics are necessarily unimodular solvable and have been classified \cite{Milnor:LeftInvMetricsonLieGroups}.

\begin{thm}[Milnor]\label{thm: Milnor flat}  A Riemannian Lie group $G$ is flat if and only
if its Lie algebra $\mathfrak g$ (with inner product) splits as an orthogonal direct sum $\mathfrak g = \mathfrak a \oplus \mathfrak n$ where $\mathfrak n$ is an abelian ideal (the nilradical) and $\mathfrak a$ is an abelian Lie algebra such that $ad\ X$ is skew-symmetric for $X\in \mathfrak a$.  Such $G$ is necessarily solvable.
\end{thm}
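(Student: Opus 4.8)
The plan is to reduce the entire statement to linear algebra on $\mathfrak g$ equipped with the inner product, using the Levi--Civita connection viewed as a family of operators. Write $L_X\colon \mathfrak g\to\mathfrak g$ for $Y\mapsto \nabla_X Y$; by the Koszul formula these are determined by the bracket and inner product via $\langle L_X Y,Z\rangle=\tfrac12(\langle [X,Y],Z\rangle-\langle [Y,Z],X\rangle+\langle [Z,X],Y\rangle)$. Two facts drive everything: since the metric is left-invariant, $\langle Y,Z\rangle$ is constant for left-invariant $Y,Z$, so metric compatibility forces $L_X\in\mathfrak{so}(\mathfrak g)$ (skew-symmetric); and torsion-freeness gives $\mathrm{ad}_X=L_X-R_X$, where $R_X Y:=\nabla_Y X=L_Y X$. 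The curvature operator is $R(X,Y)=[L_X,L_Y]-L_{[X,Y]}$, so flatness is precisely the statement that $X\mapsto L_X$ is a Lie algebra homomorphism into $\mathfrak{so}(\mathfrak g)$.

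For the sufficiency direction, I would assume the orthogonal splitting $\mathfrak g=\mathfrak a\oplus\mathfrak n$ as stated. Then $\mathrm{ad}_X^*=-\mathrm{ad}_X$ for $X\in\mathfrak a$, and the relations $[\mathfrak a,\mathfrak a]=[\mathfrak n,\mathfrak n]=0$, $[\mathfrak a,\mathfrak n]\subseteq\mathfrak n$ let one evaluate $L_X$ directly on the three types of inputs: one finds $L_X=\mathrm{ad}_X$ for $X\in\mathfrak a$ and $L_X=0$ for $X\in\mathfrak n$, where the latter uses the skew-symmetry of $\mathrm{ad}_Z|_{\mathfrak n}$ to cancel the two adjoint terms in the Koszul formula. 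Feeding these into $R(X,Y)=[L_X,L_Y]-L_{[X,Y]}$ and using that $\mathrm{ad}\colon\mathfrak a\to\mathfrak{so}(\mathfrak n)$ is a homomorphism of the abelian algebra $\mathfrak a$ then yields $R\equiv 0$. This direction is a routine computation.

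The substance is the necessity direction, and the main obstacle is to show that the image $L(\mathfrak g)\subseteq\mathfrak{so}(\mathfrak g)$ is abelian. I would attack this geometrically: a complete simply connected flat Riemannian manifold is $\mathbb R^n$, so the universal cover of $G$ with its left-invariant metric is Euclidean space, and left translations realize $G$ as a connected group of isometries of $\mathbb R^n$ acting simply transitively. The linear parts of these motions are exactly the homomorphism $L(\mathfrak g)$ into $\mathfrak{so}(n)$, and the classical structure theory of simply transitive groups of Euclidean motions forces this rotational part to be abelian; equivalently, one shows directly that $L(\mathfrak g)$ is reductive of compact type and that a nontrivial semisimple summand is incompatible with simple transitivity. (Alternatively one can run Milnor's intrinsic sectional-curvature identity and force the offending structure constants to vanish term by term.) I expect ruling out a nonabelian rotational part to be the crux of the whole argument.

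Granting that $L(\mathfrak g)$ is abelian, the argument closes cleanly. Abelianness gives $L_{[X,Y]}=[L_X,L_Y]=0$, so $\nabla$ vanishes in all directions from $[\mathfrak g,\mathfrak g]$; for $W,W'\in[\mathfrak g,\mathfrak g]$ this yields $[W,W']=\nabla_W W'-\nabla_{W'}W=0$, so $[\mathfrak g,\mathfrak g]$ is an abelian ideal. I would then set $\mathfrak n$ to be the nilradical (an abelian ideal containing $[\mathfrak g,\mathfrak g]$, the extra directions being central) and $\mathfrak a=\mathfrak n^\perp$. For $X\in\mathfrak a$ one has $X\perp[\mathfrak g,\mathfrak g]$, whence $R_X$ is symmetric, because $\langle R_X Y,Z\rangle-\langle R_X Z,Y\rangle=-\langle X,[Y,Z]\rangle$; combined with $L_X$ skew and $\mathrm{ad}_X=L_X-R_X$ together with the vanishing of $\nabla$ on $[\mathfrak g,\mathfrak g]$, this forces $R_X=0$ and hence $\mathrm{ad}_X$ skew-symmetric on $\mathfrak a$. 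Finally, solvability is automatic: $[\mathfrak g,\mathfrak g]\subseteq\mathfrak n$ is abelian, so $\mathfrak g^{(2)}\subseteq[\mathfrak n,\mathfrak n]=0$, and since the elements of $\mathfrak a$ act by skew (hence semisimple) operators while $\mathfrak n$ is an abelian nilpotent ideal, $\mathfrak n$ is indeed the nilradical.
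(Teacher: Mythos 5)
A point of comparison first: the paper does not prove this theorem at all — it is quoted as background from Milnor's paper \cite{Milnor:LeftInvMetricsonLieGroups} — so your proposal has to stand on its own. Most of it does. The reduction of flatness to ``$X\mapsto L_X$ is a Lie algebra homomorphism into $\mathfrak{so}(\mathfrak g)$'', the sufficiency computation ($L_X=\mathrm{ad}_X$ on $\mathfrak a$, $L_X=0$ on $\mathfrak n$), and everything downstream of abelianness of $L(\mathfrak g)$ are correct. In particular the identity $\langle R_XY,Z\rangle-\langle R_XZ,Y\rangle=-\langle X,[Y,Z]\rangle$ is right, and your compressed claim that $R_X=0$ does go through: since $\mathrm{ad}_X$ takes values in $[\mathfrak g,\mathfrak g]$, the block of $R_X=L_X-\mathrm{ad}_X$ on $[\mathfrak g,\mathfrak g]^\perp$ equals that of the skew operator $L_X$, while $R_X$ is symmetric and annihilates $[\mathfrak g,\mathfrak g]$ (because $L_W=0$ for $W\in[\mathfrak g,\mathfrak g]$); a block that is both symmetric and skew vanishes, so $R_X=0$, hence $\mathrm{ad}_X=L_X$ is skew and $[X,X']=\nabla_XX'-\nabla_{X'}X=0$ for all $X,X'\perp[\mathfrak g,\mathfrak g]$. (One ordering quibble: you declare the nilradical abelian ``with the extra directions central'' before this is available; the clean order is to prove the above for all $X\in[\mathfrak g,\mathfrak g]^\perp$ first, then note that a skew and nilpotent $\mathrm{ad}_X$ is zero, so the nilradical equals $[\mathfrak g,\mathfrak g]\oplus(\mathfrak z(\mathfrak g)\cap[\mathfrak g,\mathfrak g]^\perp)$, which is visibly abelian.)

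The genuine gap is precisely the step you yourself flag as the crux and then outsource: abelianness of $L(\mathfrak g)$. Appealing to ``the classical structure theory of simply transitive groups of Euclidean motions'' is circular here, because the assertion that a connected group acting simply transitively by isometries of $\mathbb R^n$ has abelian rotational part is, via your own dictionary (linear parts of left translations $=$ image of $L$), exactly the hard direction of the theorem being proved; no independent theorem is cited, and the alternative phrasing ``a nontrivial semisimple summand is incompatible with simple transitivity'' is asserted, not argued. The argument can be completed along the lines you hint at, but it needs three real ingredients: (i) $L(\mathfrak g)$, being a subalgebra of $\mathfrak{so}(n)$, carries a negative-definite invariant form, so it is reductive of compact type, $L(\mathfrak g)=\mathfrak z\oplus\mathfrak s$ with $\mathfrak s$ compact semisimple; (ii) $L^{-1}(\mathfrak s)$ is an ideal of $\mathfrak g$ with abelian radical $\ker L$, so a Levi subalgebra $\mathfrak s'\cong\mathfrak s$ is a compact-type semisimple subalgebra of $\mathfrak g$, and by Weyl's theorem the corresponding connected subgroup $S$ of the universal cover $\tilde G$ is compact; (iii) $\tilde G$ with its flat, complete, left-invariant metric is isometric to $\mathbb R^n$, and by Cartan's fixed-point theorem a compact group of isometries of a Hadamard manifold fixes a point, contradicting the freeness of left translation unless $S$ is trivial (equivalently: a Lie group diffeomorphic to $\mathbb R^n$ has no nontrivial compact subgroup). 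Hence $\mathfrak s=0$ and $L(\mathfrak g)=\mathfrak z$ is abelian. Until something like (i)--(iii) is written down, the necessity direction has not been proved.
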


Given the simple algebraic structure of these solvable Lie groups, one may classify the solvable Lie groups admitting flat metrics.  These are the groups whose Lie algebras are constructed as follows  (cf. Theorem \ref{thm: classification of solv admitting negative Einstein} where the negative Einstein case is considered).

\begin{prop}\label{prop: classification of solv admitting flat} Let $\mathfrak n$ be an abelian Lie algebra and $\mathfrak a \subset Der(\mathfrak n)$ an abelian, reductive subalgebra of derivations, all of whose elements have purely imaginary eigenvalues.  If $G$ is  a (solvable) Lie group whose  Lie algebra is the semi-direct product $\mathfrak a \oplus \mathfrak n$, then $G$  admits a flat metric. Conversely, every such solvable group arises this way.
\end{prop}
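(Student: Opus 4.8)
The plan is to read both implications off Milnor's theorem (Theorem~\ref{thm: Milnor flat}), so that the entire proposition collapses to a single linear-algebra statement: for an abelian subalgebra $\mathfrak{a}\subset\mathfrak{gl}(\mathfrak{n})$ there is an inner product on $\mathfrak{n}$ making every element of $\mathfrak{a}$ skew-symmetric \emph{if and only if} $\mathfrak{a}$ is reductive (equivalently, its elements are semisimple) and every element of $\mathfrak{a}$ has purely imaginary eigenvalues. Note that since $\mathfrak{n}$ is abelian one has $Der(\mathfrak{n})=\mathfrak{gl}(\mathfrak{n})$, so "subalgebra of derivations" is simply "subalgebra of endomorphisms" in this setting.

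For the forward direction I would first manufacture the desired inner product on $\mathfrak{n}$. Because $\mathfrak{a}$ is abelian, reductive, and has purely imaginary spectrum, each $X\in\mathfrak{a}$ is semisimple with eigenvalues in $i\mathbb{R}$, so $\exp(tX)$ has unit-modulus eigenvalues and the abelian subgroup $\exp(\mathfrak{a})\subset GL(\mathfrak{n})$ has relatively compact, hence toral, closure $T$. Averaging an arbitrary background inner product over $T$ against Haar measure produces a $T$-invariant inner product on $\mathfrak{n}$, and invariance under the one-parameter groups $\exp(tX)$ forces every $X\in\mathfrak{a}$ to act skew-symmetrically. I then extend to $\mathfrak{g}=\mathfrak{a}\oplus\mathfrak{n}$ by choosing any inner product on $\mathfrak{a}$ and decreeing $\mathfrak{a}\perp\mathfrak{n}$. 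For $X\in\mathfrak{a}$ the operator $ad\ X$ kills $\mathfrak{a}$ (since $\mathfrak{a}$ is abelian) and acts as $X$ on $\mathfrak{n}$, so $ad\ X$ is skew-symmetric on all of $\mathfrak{g}$; as $\mathfrak{n}$ is an abelian ideal and $\mathfrak{a}$ is abelian, Milnor's theorem applies and $G$ is flat.

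For the converse, suppose $G$ is solvable and carries a flat left-invariant metric. Milnor's theorem supplies an orthogonal splitting $\mathfrak{g}=\mathfrak{a}\oplus\mathfrak{n}$ with $\mathfrak{n}$ an abelian ideal, $\mathfrak{a}$ abelian, and $ad\ X$ skew-symmetric for all $X\in\mathfrak{a}$. The homomorphism $X\mapsto ad\ X|_{\mathfrak{n}}$ then realizes $\mathfrak{g}$ as the semidirect product $\mathfrak{a}\oplus\mathfrak{n}$ with $\mathfrak{a}\subset Der(\mathfrak{n})$. Each $ad\ X|_{\mathfrak{n}}$ is the restriction of a skew-symmetric operator to the invariant subspace $\mathfrak{n}$, hence is itself skew-symmetric, and therefore semisimple with purely imaginary eigenvalues. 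Consequently the image is an abelian subalgebra of $\mathfrak{gl}(\mathfrak{n})$ stable under transpose, which forces it to be reductive; this establishes all the required properties. (Should the action map have a nonzero, necessarily central, kernel $\mathfrak{a}_0$, I would absorb $\mathfrak{a}_0$ into the abelian ideal, replacing $\mathfrak{n}$ by $\mathfrak{n}\oplus\mathfrak{a}_0$; this keeps the ideal abelian and leaves the skew-symmetry and eigenvalue conditions intact.)

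The only real substance beyond Theorem~\ref{thm: Milnor flat} is the skew-symmetrization lemma above, and the step I expect to require the most care is pinning down the meaning of "reductive" and matching it with semisimplicity of the operators. I would fix "reductive subalgebra of $\mathfrak{gl}(\mathfrak{n})$" to mean algebraically reductive (equivalently, self-adjoint for some inner product) and verify its equivalence with simultaneous diagonalizability over $\mathbb{C}$ of the commuting family; the compactness-of-closure argument then yields the invariant inner product uniformly, avoiding any explicit Jordan-form bookkeeping.
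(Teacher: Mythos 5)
Your proof is correct, and it shares the paper's overall skeleton---reduce everything to Milnor's characterization (Theorem \ref{thm: Milnor flat}) by producing an inner product on $\mathfrak n$ making every element of $\mathfrak a$ skew-symmetric---but you establish that key lemma by a genuinely different mechanism. The paper invokes Mostow's theorem on self-adjoint groups \cite{Mostow:SelfAdjointGroups} to obtain an inner product making $\mathfrak a$ stable under transpose, and then notes that in a transpose-stable abelian algebra every element is normal, and a normal real operator with purely imaginary eigenvalues is skew-symmetric. You instead exploit the hypotheses directly: commuting semisimple operators with purely imaginary spectrum exponentiate to a bounded, hence relatively compact, abelian subgroup, and Haar averaging over its toral closure yields the invariant (equivalently, skew-symmetrizing) inner product. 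Your route is more elementary and self-contained---it replaces the citation of a general structure theorem by the unitarian trick, and it makes transparent exactly where the purely-imaginary hypothesis enters (it is what forces precompactness)---at the cost of being special to the abelian case, whereas Mostow's theorem is the tool the paper reuses elsewhere (e.g., in the proof of Theorem \ref{thm: classification of solv admitting negative Einstein}) for nonabelian reductive algebras. You are also more thorough on the converse: the paper's proof addresses only the forward implication, leaving the converse implicit in Milnor's theorem, while you spell it out, including the observation that a possible central kernel $\mathfrak a_0$ of the map $X \mapsto ad\ X|_{\mathfrak n}$ must be absorbed into the abelian ideal; that is exactly the right fix.
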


\begin{proof} Picking a basis of $\mathfrak n$, we may identify it with $\mathbb R^n$.  Via this identification, the  abelian, reductive algebra $\mathfrak a \subset \mathfrak {gl}(n,\mathbb R)$.

It is well-known that there exists an inner product on $\mathbb R^n$ such that $\mathfrak a$ is stable under the transpose operation, see \cite{Mostow:SelfAdjointGroups}.  
As the eigenvalues of every element in $\mathfrak a$ are purely imaginary, we see that $\mathfrak a$ consists of skew-symmetric derivations.  Now Milnor's theorem above applies.
\end{proof}

As the unimodular solvable Lie groups admitting Einstein metrics are understood, our attention is dedicated to analyzing the non-unimodular solvable groups admitting Einstein metrics  and both kinds of solvable groups which admit solsoliton metrics.
As in the case of nilpotent Lie groups, solsolitons (including Einstein metrics) have several rigid properties.  The following results may be found in \cite{Lauret:SolSolitons}.

\begin{thm}[Lauret]\label{thm: lauret solsolitons criteria} Let $(G,g)$ be a solvable Riemannian Lie group with metric Lie algebra $(\mathfrak g, g)$. Let $\mathfrak n$ be the nilradical of $\mathfrak g$ (with induced metric) and $\mathfrak a = \mathfrak n ^\perp$, so that $\mathfrak g=\mathfrak a\oplus \mathfrak n$.
    \begin{enumerate}
        \item (Structural results and the standard property)\\
            The Riemannian Lie group $G$ is a solsoliton (i.e. $Ric=cId + D$) if and only if\\
            a) $(\mathfrak n, g)$ (with the induced metric) is a nilsoliton\\
            b) $\mathfrak a = \mathfrak n^\perp$ is abelian\\
            c) $(ad\ A)^t \in Der(\mathfrak g)$ (or equivalently, $[ad\ A, (ad\ A)^t] = 0$) for all $A \in \mathfrak a$.\\
            d) $g(A,A) = \frac{-1}{c} tr\ S(ad\ A)^2$ for all $A\in \mathfrak a$, where $S(ad\ A) = \frac{1}{2}(ad\ A+(ad\ A)^t)$
        \item (Solsolitons are not shrinkers)\\
            The constant $c$ satisfying $Ric=cId +D$ satisfies $c\leq 0$.  Moreover, if $c=0$ then $D=0$ and the metric is flat (cf. Theorem \ref{thm: Milnor flat}).  This says solsolitons are either so-called steady or expanding Ricci solitons (as opposed to shrinking solitons).
        \item (Uniqueness)\\
            If a solvable Lie group admits a solsoliton, then it is unique (up to isometry) after prescribing the scalar curvature.
    \end{enumerate}
\end{thm}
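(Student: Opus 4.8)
The plan is to analyze the soliton equation $Ric = cId + D$ block by block with respect to the orthogonal splitting $\mathfrak g = \mathfrak a \oplus \mathfrak n$, using the explicit Ricci formula recorded in Section~\ref{sec: Riem Lie groups} together with the nilsoliton theorem quoted above. One purely formal observation organizes the whole argument: since $Ric$ and $cId$ are self-adjoint, the derivation $D = Ric - cId$ is automatically symmetric. So throughout we may assume $D = D^t$, which is exactly what makes conditions (c) and (d)---statements about $S(ad\ A) = \frac12(ad\ A + (ad\ A)^t)$---the natural ones to verify. We also record the standing facts that $[\mathfrak g,\mathfrak g]\subseteq \mathfrak n$, that any derivation preserves the nilradical (so $D(\mathfrak n)\subseteq \mathfrak n$), and that the mean curvature vector $H$, defined by $\langle H, \cdot\rangle = tr(ad\ \cdot)$, lies in $\mathfrak a$.

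For the reverse implication I would assume (a)--(d) and compute the three pieces $Ric|_{\mathfrak a}$, $Ric|_{\mathfrak n}$, and the cross term $\langle Ric\,\mathfrak a,\mathfrak n\rangle$ directly from the Ricci formula. Because $\mathfrak a$ is abelian by (b) and $[\mathfrak g,\mathfrak g]\subseteq\mathfrak n$, the bracket is determined by the nilpotent bracket on $\mathfrak n$ and the operators $ad\ A|_{\mathfrak n}$, $A\in\mathfrak a$. Condition (a) identifies the $\mathfrak n$-block with the nilsoliton operator $Ric_{\mathfrak n} = c\,Id + D_{\mathfrak n}$, corrected by the symmetric parts of the $ad\ A$; condition (c), i.e. $[ad\ A,(ad\ A)^t]=0$, is precisely what forces these correction terms to assemble into a genuine derivation and what kills the cross term; condition (d) pins the $\mathfrak a$-block to $c\,Id$. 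Reassembling the blocks yields $Ric = c\,Id + D$ with $D$ the symmetric derivation restricting to $D_{\mathfrak n}$ on $\mathfrak n$. This direction is a lengthy but conceptually routine bookkeeping computation.

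The forward implication is the crux. Extracting (a) is comparatively easy: restricting the soliton equation to $\mathfrak n$ and subtracting the $\mathfrak a$-correction terms exhibits $Ric_{\mathfrak n}$ in the form $c'\,Id + (\text{symmetric derivation of }\mathfrak n)$, so $(\mathfrak n, g)$ is an algebraic soliton and hence a nilsoliton by the nilsoliton theorem. The genuine difficulty is (b) and (c), which are not visible from the pointwise equation. Here I would play the derivation property of $D$ against trace identities: pairing $Ric = cId + D$ with the symmetric and skew parts of the $ad\ A$ and with $ad\ H$, and using that $D$ is a symmetric derivation preserving $\mathfrak n$, should produce inequalities of the form $tr\,S(ad\ A)^2 \ge \|(ad\ A)_{\mathrm{skew}}\|^2$ together with nonnegative $[\mathfrak a,\mathfrak a]$-contributions, all of which are forced to be equalities. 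This simultaneously yields $[\mathfrak a,\mathfrak a]=0$ and $[ad\ A,(ad\ A)^t]=0$, and once (b) and (c) hold, (d) is read off from the now block-diagonal $\mathfrak a$-block. I expect this standardness-type step to be the main obstacle: a bare pointwise computation seems insufficient, and one most likely must import the variational characterization of nilsolitons (or, in the framework developed later in the paper, the moment-map geometry of distinguished orbits) to turn the trace inequalities into equalities.

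Finally, for Parts 2 and 3. For the sign of $c$, I would combine the scalar-curvature dichotomy of Alekseevskii--Kimel'fel'd and Dotti with the nilsoliton structure on $\mathfrak n$: tracing the soliton equation and using that a nonabelian nilsoliton has strictly negative Einstein constant gives $c \le 0$, while $c=0$ forces the $\mathfrak n$-block to be Ricci flat, hence $\mathfrak n$ abelian and $D=0$, at which point Milnor's theorem (Theorem~\ref{thm: Milnor flat}) identifies the metric as flat. For uniqueness I would reduce to the uniqueness clause of the nilsoliton theorem: two solsolitons with equal scalar curvature have isometric, suitably scaled nilradical nilsolitons, and conditions (c)--(d) then determine the remaining data (the derivation governing the $\mathfrak a$-action and the induced inner product on $\mathfrak a$) up to the automorphisms realizing that isometry. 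The one residual point---that these choices can be conjugated simultaneously---is again cleanest via a convexity/variational argument paralleling the nilpotent case.
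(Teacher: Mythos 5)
First, a point of orientation: the paper does not prove Theorem \ref{thm: lauret solsolitons criteria} at all. It is quoted verbatim from Lauret \cite{Lauret:SolSolitons} (``The following results may be found in \cite{Lauret:SolSolitons}''), so there is no in-paper argument to compare against; the relevant comparison is with Lauret's original proof, and that comparison exposes a genuine gap in your proposal.

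Your reverse direction (conditions (a)--(d) imply $Ric = cId + D$) is indeed a block-by-block computation, and your outline of it is sound; this matches the constructive half of the proof in \cite{Lauret:SolSolitons}. The problem is the forward direction. Conditions (b) and (c) are precisely the \emph{standard} property, and while you correctly flag this as the crux, the mechanism you propose --- pairing the soliton equation with $S(ad\ A)$, the skew parts, and $ad\ H$ to produce trace inequalities, then forcing equality via the variational characterization of nilsolitons --- is not a proof, and no argument of this elementary kind is known. Standardness of Einstein solvmanifolds was a long-standing open problem, resolved by Lauret \cite{LauretStandard} using the GIT stratification of the variety of Lie brackets (Theorem \ref{thm: stratification of V}); the solsoliton case is then deduced in \cite{Lauret:SolSolitons} by a device entirely different from yours: one extends a solsoliton with $Ric = cId + D$ by the derivation $D$ to an Einstein solvmanifold of one dimension higher, applies the standardness theorem and the Einstein structure theory (Heber \cite{Heber}, Lauret \cite{LauretStandard}) there, and descends back. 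Without that input your inequalities have no reason to close up. Two secondary issues: extracting (a) is not ``comparatively easy'' independently of (b) and (c), since the correction terms you would subtract from $Ric|_{\mathfrak n}$ involve the operators $ad\ A|_{\mathfrak n}$ and assemble into a derivation of $\mathfrak n$ only \emph{after} (c) is known, so the extraction as you describe it is circular; and the uniqueness in Part 3 is not a formal consequence of nilsoliton uniqueness plus (c)--(d), because matching the two $\mathfrak a$-blocks requires conjugating the corresponding tori inside $Der(\mathfrak n)$ compatibly with the nilsoliton isometry (a Mostow-type conjugacy and moment-map argument), which is again substantive content in \cite{Lauret:SolSolitons} rather than a ``residual point.''
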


Observe that Part \textit{i.d)} implies that the eigenvalues of $ad\ A$ are not all purely imaginary, for any $A\in \mathfrak a$.

\begin{remark} 1) There does exist a variational characterization for Einstein solvmanifolds with codimension 1 nilradical which realizes these spaces as critical points of a `modified scalar curvature function' (see \cite{Lauret:StandardEinsteinSolvAsCriticalPoints}).
2) As in the case of flat Einstein metrics,  we have a characterization of solvable Lie groups which admit Einstein and solsoliton metrics,  see Theorem \ref{thm: classification of solv admitting negative Einstein} and Corollary \ref{cor: classification of solv admitting negative Einstein}.
\end{remark}

Let $G$ be a solvable unimodular Lie group.  If $G$ admits a non-trivial solsoliton, it cannot admit a (flat) Einstein metric, and conversely, if $G$ admits a (flat) Einstein metric, it cannot admit a non-trivial solsoliton.  In this way, solsolitons are a preferred metric on unimodular solvable groups that cannot admit (flat) Einstein metrics.  This preference is defended by the following, see Theorem \ref{thm: comp solv unimod solsoliton have max isom grp}.

\begin{thm*}
Let $G$ be unimodular completely solvable Lie group which admits a solsoliton metric.  Given any left-invariant metric $g$, there exists a solsoliton  metric $g'$ such that $Isom(g) \subset Isom(g')$.
\end{thm*}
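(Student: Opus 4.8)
The plan is to reduce the comparison of isometry groups to a comparison of their isotropy subgroups at the identity, and then to control how that isotropy grows as one deforms $g$ to the solsoliton $g'$. First I would invoke the known structure theory of isometry groups of completely solvable unimodular Riemannian Lie groups: for such $(G,g)$ the isotropy subgroup of $Isom(g)$ at $e\in G$ is precisely the group of orthogonal automorphisms $O(\mathfrak g,g)\cap Aut(\mathfrak g)$, and the full isometry group is determined by the left translations $G$ together with this isotropy. This is the step where unimodularity and complete solvability are indispensable --- without them one picks up extra isometries not arising from automorphisms (e.g.\ real hyperbolic space realized as a non-unimodular solvable group). Consequently, to prove $Isom(g)\subset Isom(g')$ it suffices to produce a solsoliton $g'$ with $O(\mathfrak g,g)\cap Aut(\mathfrak g)\subseteq O(\mathfrak g,g')\cap Aut(\mathfrak g)$, the translational factor $G$ being common to both.

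Next I would pass to the moving-bracket picture of Section \ref{sec: variety of Lie brackets}. Fixing the underlying inner product space $(\mathbb R^n,\ip{\cdot,\cdot})$ and letting $\mu_0$ denote the Lie bracket of $(\mathfrak g,g)$, every left-invariant metric on $G$ corresponds to a point in the $GL_n(\mathbb R)$-orbit of $\mu_0$, and the orthogonal-automorphism group of that metric is exactly the stabilizer $O(n)_{h\cdot\mu_0}$. Under the dictionary of Section \ref{sec: solitons vs distinguished orbits}, solsolitons correspond to the distinguished (critical / minimal-vector) points of the moment map on this orbit; since $G$ is assumed to admit a solsoliton, the orbit of $\mu_0$ is distinguished and contains such a point. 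I would then deform $\mu_0$ to a critical point along the natural normalized bracket flow of Section \ref{sec: bracket flow}, realized concretely by the two curves of Proposition \ref{prop: finding Einstein via 2 curves}. Because this flow stays inside the orbit, its limit $\mu_\infty$ is again a bracket on $\mathfrak g$, and the associated metric $g'$ is a solsoliton on the \emph{same} group $G$.

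The heart of the argument is a monotonicity of stabilizers along this flow. The flow is driven by the symmetric Ricci endomorphism, whose essential property is $O(n)$-equivariance, $Ric_{k\cdot\mu}=k\,Ric_\mu\,k^{-1}$ for every $k\in O(n)$; hence the flow's vector field is $O(n)$-equivariant. Therefore any $k\in O(n)_{\mu_0}$ fixing the initial bracket commutes with the flow, so $k\cdot\mu_t$ and $\mu_t$ solve the same ODE with the same initial value; by uniqueness $k\cdot\mu_t=\mu_t$ for all $t$, and passing to the limit gives $k\cdot\mu_\infty=\mu_\infty$. This yields $O(n)_{\mu_0}\subseteq O(n)_{\mu_\infty}$, i.e.\ the desired containment of orthogonal-automorphism groups, which by the first paragraph upgrades to $Isom(g)\subset Isom(g')$.

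The main obstacle I anticipate is analytic rather than algebraic: guaranteeing that the bracket flow genuinely converges, and converges to a point in the same orbit, so that $g'$ is a metric on $G$ rather than on some degeneration of $\mathfrak g$. This is exactly where the hypothesis that $G$ admits a solsoliton enters --- it forces the orbit to be distinguished, so the real-GIT picture of Sections \ref{sec: solitons vs distinguished orbits}--\ref{sec: bracket flow} supplies a critical point inside the orbit toward which the normalized flow converges. A secondary point requiring care is that the containment of orthogonal-automorphism groups must be read at the level of the full (possibly disconnected) groups rather than identity components; fortunately the equivariance argument applies verbatim to every element of $O(n)_{\mu_0}$, so no additional work is needed there.
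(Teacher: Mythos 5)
Your overall strategy (Gordon--Wilson reduction to the isotropy $Aut(\mathfrak g)\cap O(g)$, passage to the bracket variety, gradient flow to a critical point, $O(n)$-equivariance giving monotonicity of stabilizers) is the same skeleton as the paper's, but there is a genuine gap at the final step, precisely where you say the containment $O(n)_{\mu_0}\subseteq O(n)_{\mu_\infty}$ ``upgrades'' to $Isom(g)\subset Isom(g')$. The stabilizer $O(n)_{\mu_\infty}$ is \emph{not} the orthogonal-automorphism group of the metric $g'$ on the original group $G$. If $h\in GL(n,\mathbb R)$ is the element with $h\cdot\mu_0=\mu_\infty$, then $g'=(h^{-1})^*\langle\cdot,\cdot\rangle$, and the isotropy of $Isom(G,g')$ at $e$, viewed inside $Aut(\mu_0)$, is the conjugate $h^{-1}\bigl(Aut(\mu_\infty)\cap O(n)\bigr)h$. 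So what you actually need is $O(n)_{\mu_0}\subset h^{-1}O(n)_{\mu_\infty}h$, equivalently $h\,O(n)_{\mu_0}\,h^{-1}\subset O(n)_{\mu_\infty}$; your equivariance argument gives $O(n)_{\mu_0}\subset O(n)_{\mu_\infty}$, and conjugating an orthogonal stabilizer by a general $h\in GL(n,\mathbb R)$ destroys orthogonality, so the two statements do not coincide. The paper closes exactly this hole by invoking Theorem \ref{thm: G dist iff ZGH dist}: setting $H=Aut(\mu_0)\cap O(\langle\cdot,\cdot\rangle)$ (which is $\theta$-stable), the moment maps of $GL(n,\mathbb R)$ and of the centralizer $Z_{GL(n,\mathbb R)}(H)$ agree along $Z_{GL(n,\mathbb R)}(H)\cdot\mu_0$, so the gradient flow stays in that smaller orbit and the limit satisfies $\mu_\infty=h\cdot\mu_0$ with $h$ \emph{centralizing} $H$. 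Only then does $O(n)_{\mu_0}=h^{-1}O(n)_{\mu_0}h\subset h^{-1}O(n)_{\mu_\infty}h$ follow, which is the containment of isotropy groups of the two metrics on the common group $G$. Without an argument that the flow limit can be reached by an element commuting with the initial isotropy, your proof does not yield nested isometry groups, only abstractly isomorphic copies sitting in incompatible positions.

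A secondary, smaller issue: for a completely solvable unimodular group that is not nilpotent, the critical point $\mu_\infty$ of $\|m\|^2$ is a \emph{distinguished} metric, not yet a solsoliton; the two differ on $\mathfrak a\times\mathfrak a$, where $\mathfrak a=\mathfrak n^\perp$ (cf. the proof of Proposition \ref{prop: admit solsoliton implies distinguished orbit}). You cite Proposition \ref{prop: finding Einstein via 2 curves} but then treat the flow limit itself as the solsoliton. The paper's Step 2 performs the additional modification of the metric on $\mathfrak a$ and appeals to Remark \ref{rmk: comp solv unimod - dist metric vs solsoliton and isometry groups} to see that, in the completely solvable case, this rescaling leaves $Aut(\mu)\cap O(\cdot)$ unchanged; your equivariance argument covers only the gradient-flow portion of the deformation, so this last step needs to be stated and justified separately.
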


\begin{remark}Presently, we do not have such a theorem when the group is not completely solvable or for nonunimodular solvable Lie groups.  Our techniques do allow one to embed a large portion of the isometry group of any metric into the Einstein or solsoliton metric, however they do not allow one to embed the entire isometry group.  This question will be addressed in future work.
\end{remark}

\subsection*{Isometry groups}
The group of isometries of a Riemannian solvable Lie group is particularly simple when the group in question is a completely solvable unimodular group.  The following is Theorem 4.3 of \cite{GordonWilson:IsomGrpsOfRiemSolv}.

\begin{thm}[Gordon-Wilson]\label{thm: isom of compl solv unimod} Let $G$ be a completely solvable unimodular Lie group with left-invariant metric $g$.  The full isometry group is a semi-direct product
    $$Isom(G,g) = K \ltimes G$$
where $K \subset Aut(G)$ is the isotropy subgroup of $Isom(G,g)$ preserving the identity $e\in G$.  Under the natural identification $Aut(G) \simeq Aut(\mathfrak g)$ we have $K \simeq Aut(\mathfrak g)\cap O(g)$, where $O(g)$ is the orthogonal group of the inner product $g$ on $\mathfrak g$.
\end{thm}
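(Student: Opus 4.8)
The plan is to collapse the whole statement to the single assertion that every isometry fixing $e$ is a Lie group automorphism, since everything else is formal. First I would record that the left translations $L_h$ are isometries of $(G,g)$ acting simply transitively, so with $I:=Isom(G,g)$ and $K:=I_e$ the isotropy at $e$ one has $I=K\cdot G$ and $K\cap G=\{e\}$ (a left translation fixing $e$ is the identity). The differential $dk_e$ of $k\in K$ lies in $O(g)$, and since a connected Lie group is determined near $e$ by its Riemannian exponential, $k$ is determined by $dk_e$; thus $K\hookrightarrow O(g)$. Conversely, any $\phi\in Aut(\mathfrak g)\cap O(g)$ integrates to $\Phi\in Aut(G)$ with $\Phi^*g=g$ (because $\Phi\circ L_h=L_{\Phi(h)}\circ\Phi$ forces $\Phi^*g$ to be left-invariant while $\Phi^*g|_e=\phi^*(g|_e)=g|_e$), so $\Phi\in K$; hence $Aut(\mathfrak g)\cap O(g)\subseteq K$. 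The content is therefore the reverse inclusion.

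Next I would note that the semidirect-product structure follows once $K\subseteq Aut(G)$: for $k\in K$ the conjugate $kL_hk^{-1}$ sends $e\mapsto k(h)$, and the identity $kL_hk^{-1}=L_{k(h)}$ for all $h$ is exactly the statement that $k$ is a homomorphism. Thus $K\subseteq Aut(G)$ makes $G$ normal in $I$, and together with $I=KG$ and $K\cap G=\{e\}$ this yields $I=K\ltimes G$; it simultaneously gives $K\subseteq Aut(\mathfrak g)\cap O(g)$, completing the identification of $K$. So the entire theorem reduces to showing that \emph{every} $k\in K$ is an automorphism of $G$.

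To attack this I would pass to the Lie algebra $\mathfrak i$ of $I$. Writing $\hat{\mathfrak g}\subset\mathfrak i$ for the Killing fields generating the left translations (the right-invariant fields) and $\mathfrak k=Lie(K)$, transitivity and simple transitivity give the vector-space splitting $\mathfrak i=\mathfrak k\oplus\hat{\mathfrak g}$ with $\hat{\mathfrak g}\cong\mathfrak g$. The claim $K\subseteq Aut(G)$ is equivalent to $\hat{\mathfrak g}$ being an ideal of $\mathfrak i$, i.e. to $[\mathfrak k,\hat{\mathfrak g}]\subseteq\hat{\mathfrak g}$ (after also checking $\pi_0(K)$, which I would handle by applying the same infinitesimal analysis to each component representative acting on $\mathfrak g$). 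For $Z\in\mathfrak k$ the endomorphism $D_Z:=(\nabla Z)_e\in\mathfrak{so}(\mathfrak g)$ is skew-symmetric since $Z$ is a Killing field with $Z_e=0$, and a bracket computation in $\mathfrak i$ using $[Z,\hat X]_e=-D_Z X$ reduces the ideal condition to the requirement that each $D_Z$ be a \emph{derivation} of $\mathfrak g$. By the Ambrose-Singer/Singer description, $\mathfrak k$ is precisely the stabilizer in $\mathfrak{so}(\mathfrak g)$ of the curvature tensor and all its covariant derivatives at $e$, so the problem becomes the purely algebraic assertion that this skew-symmetric curvature-jet stabilizer consists of derivations.

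The hard part is exactly this last step, and it is where complete solvability and unimodularity are indispensable. Without complete solvability the assertion is false: the universal cover of the Euclidean motion group $\widetilde E(2)$ with its flat metric is solvable and unimodular, yet admits isometries fixing $e$ (the planar rotations) that are not automorphisms, so $G$ is not normal in $I$. Complete solvability excludes precisely this: by Milnor's theorem (Theorem \ref{thm: Milnor flat}) a completely solvable flat group is abelian, so there are no nonzero skew-symmetric $ad$-operators available to manufacture such spurious rotational isometries. My plan for the general completely solvable unimodular case is to exploit this rigidity systematically: analyze a skew-symmetric $D\in\mathfrak k$ relative to the orthogonal splitting $\mathfrak g=\mathfrak a\oplus\mathfrak n$ with nilradical $\mathfrak n=[\mathfrak g,\mathfrak g]$, show $D$ preserves $\mathfrak n$ (it stabilizes the curvature, which detects $\mathfrak n$), and then use the real-eigenvalue hypothesis to show the curvature jet pins down the bracket finely enough to force $D[X,Y]=[DX,Y]+[X,DY]$. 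Establishing this curvature-to-bracket rigidity—that stabilizing the full curvature jet of a completely solvable unimodular metric Lie algebra compels one to respect the bracket—is the technical crux; the reductions above are routine once it is in hand.
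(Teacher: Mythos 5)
Your formal reductions are correct but they are not where the theorem lives. Left translations give $I=KG$ with $K\cap G=\{e\}$; an isometry of a connected Riemannian manifold is determined by its $1$-jet at a point, so $K\hookrightarrow O(g)$; elements of $Aut(\mathfrak g)\cap O(g)$ integrate to isometric automorphisms (using the simple connectivity implicit in the identification $Aut(G)\simeq Aut(\mathfrak g)$); and the semidirect structure plus the identification of $K$ follow once every $k\in K$ is an automorphism. All of this is routine, as you say. The problem is that the step you defer as the ``technical crux'' --- that a skew-symmetric endomorphism stabilizing the curvature tensor and all its covariant derivatives at $e$ must be a derivation of $\mathfrak g$, for $\mathfrak g$ completely solvable and unimodular --- is not a simplification of the theorem; via the Nomizu/Singer correspondence it \emph{is} the theorem, restated at the Lie-algebra level. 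You offer only a plan for it (``show $D$ preserves $\mathfrak n$\dots use the real-eigenvalue hypothesis to pin down the bracket''), with no argument for why the curvature jet detects $\mathfrak n$, and --- more seriously --- the plan never exhibits where unimodularity would enter, even though you correctly note it is indispensable (for the non-unimodular completely solvable group underlying $\mathbb{R}H^n$ the isotropy is $O(n)$, vastly larger than $Aut(\mathfrak g)\cap O(g)$). So the proposal, as written, proves nothing beyond the formal shell.

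Two further points. First, your infinitesimal argument $[\mathfrak k,\hat{\mathfrak g}]\subseteq\hat{\mathfrak g}$ can at best show that the \emph{identity component} of $K$ normalizes $G$; the parenthetical ``apply the same infinitesimal analysis to each component representative'' does not parse, since a component representative is a single isometry, not a Killing field, and controlling $\pi_0(K)$ genuinely requires a group-level argument (e.g.\ exhibiting $G$ as a characteristic subgroup of $I_0$). Second, for calibration: the paper does not prove this statement at all --- it quotes it as Theorem 4.3 of \cite{GordonWilson:IsomGrpsOfRiemSolv} --- and Gordon--Wilson's actual proof proceeds by a different route entirely, through the structure theory of transitive solvable subgroups of isometry groups (``modifications'' and standard position), not through curvature-jet rigidity. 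Completing your route would amount to constructing a new proof of their theorem, and the hard part of that construction is exactly what is missing here.
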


Observe that this theorem covers the case of any Riemannian nilpotent Lie group.

\begin{defin}\label{def: alg isom group} Let $(G,g)$ be a Riemannian Lie group.  The group $G\rtimes (Isom (g) \cap Aut(G))$ is a subgroup of isometries which we call the algebraic isometry group.
\end{defin}

The above theorem says that the algebraic isometry group of a completely unimodular solvable group is the whole isometry group.  For non-unimodular solvable groups, it is well-known that the full isometry group is significantly larger than its algebraic isometry group \cite{GordonWilson:IsomGrpsOfRiemSolv}.

\section{The Variety of Lie Brackets}\label{sec: variety of Lie brackets}
A Lie group $G$ with a left-invariant metric $\ip{\ , \ }$ gives rise to a \textit{metric Lie algebra} $\{ \mathfrak g, \ip{\ , \ } \}$, where $\mathfrak g$ is the Lie algebra of $G$ and the inner product on $\mathfrak g$ is the restriction of the left-invariant metric to $T_eG \simeq \mathfrak g$.  Conversely, a metric Lie algebra gives a left-invariant metric on any Lie group with said Lie algebra.  We are primarily interested in simply-connected Lie groups.

We say that two metric Lie algebras $\{\mathfrak g_1, \ip{\ , \ }_1\}$ and $\{\mathfrak g_2, \ip{\ , \ }_2\}$ are \textit{isomorphic} if there exists a Lie algebra isomorphism $\phi : \mathfrak g_1 \to \mathfrak g_2$ such that $\ip{\ ,\ }_1 = \phi^*\ip{\ ,\ }_2$.  Such an isomorphism lifts to give an isometry between the simply-connected Riemannian Lie groups $\{G_1, \ip{\ , \ }\} \to \{G_2,\ip{\ , \ } \}$.  In general, most isometries do not arise this way, however, for nilpotent and some solvable groups, this is how all isometries arise (see Theorem \ref{thm: isom of compl solv unimod}).

To obtain good information on Riemannian Lie groups, we study metric Lie algebras by considering a metric Lie algebra as a collection of three objects: a vector space $\mathbb R^n$, a Lie bracket $[\cdot , \cdot ]$ and an inner product $\ip{\cdot , \cdot }$.  We use what is becoming a standard technique and convert our questions into a frame work that can exploit deep theorems from Geometric Invariant Theory:  Instead of fixing a Lie algebra and varying an inner product on it, we  choose to fix an inner product and vary the underlying Lie algebra structure (within the same isomorphism class).

For $g\in GL(n,\mathbb R)$, we may consider a different (and isomorphic) Lie bracket $g^*[\cdot , \cdot ] = g[g^{-1}\cdot , g^{-1}\cdot]$ and the inner product $g^*\ip{\cdot, \cdot} = \ip{g^{-1}\cdot, g^{-1}\cdot}$.  The following are isomorphic metric Lie algebras
    $$\{\mathbb R^n, g^*[\cdot,\cdot ],\ip{\cdot,\cdot}\} \simeq  \{\mathbb R^n, [\cdot,\cdot],(g^{-1})^*\ip{\cdot,\cdot}\}   $$
via the isomorphism $g^{-1} :\mathbb R^n\to \mathbb R^n$.

We now fix an inner product (the usual one) on $\mathbb R^n$ and study the collection $g^*[\cdot,\cdot ]$, $g\in GL(n,\mathbb R)$.  It is helpful to study not just this collection of isomorphic Lie algebras on $\mathbb R^n$, but instead to study all Lie algebra structures on $\mathbb R^n$.  Consider the vector space $V = \wedge^2(\mathbb R^n)^*\otimes \mathbb R^n$, the space of anti-symmetric, bilinear maps from $\mathbb R^n \times \mathbb R^n \to \mathbb R^n$.  This vector space is endowed with a natural  $GL(n,\mathbb R)$ action:
    $$(g^* [\cdot,\cdot])(v,w) = g[g^{-1}v,g^{-1}w]$$
for $g\in GL(n,\mathbb R)$, $v,w\in \mathbb R^n$.  Via differentiation, we also have an action of $\mathfrak{gl}(n,\mathbb R)$ on $V$.  Given $X\in \mathfrak{gl}(n,\mathbb R)$ and $v,w\in \mathbb R^n$, we have $(X\cdot[\cdot,\cdot])(v,w) = X[v,w]-[Xv,w]-[v,Xw]$.\bigskip

The points of $V$ can be thought of as anti-symmetric algebra structures on $\mathbb R^n$, and two algebra structures are isomorphic if and only if they lie in the same $GL(n,\mathbb R)$-orbit.  Any Lie bracket $[\cdot,\cdot]$ on $\mathbb R^n$ can be realized as a point in $V$ and the subset
    $$\mathcal V = \{ \mu \in V \ | \ \mu \mbox{ satisfies the Jacobi identity } \}$$
is a variety in $V$ whose points are the Lie brackets on $\mathbb R^n$.  Additionally, we will restrict our attention to some interesting subsets of $\mathcal V$: let $\mathcal N$ denote the Lie brackets which are nilpotent, $\mathcal S$ denote the Lie brackets which are solvable, and $\mathcal{CS}$ denote the Lie brackets which are completely solvable (cf. Section \ref{sec: Riem Lie groups}).  We have the following containments
    $$\mathcal N \subset \mathcal{CS} \subset \mathcal S$$
These subsets are all closed in $V$.  We will often abuse language and refer to $\mu \in \mathcal V$ as a Lie algebra, when we really mean the pair $\{\mathbb R^n,\mu\}$.

Given a Lie bracket $\mu \in \mathcal V$, we will denote by $\mathfrak s_\mu$ the metric Lie algebra $\{\mathbb R^n, [\cdot,\cdot ] ,\ip{\cdot,\cdot}\}$; the corresponding simply connected Lie group with left-invariant metric will be denoted by $S_\mu$.

\begin{remark} For $k\in O(n,\mathbb R)$, the groups $S_\mu$ and $S_{k\cdot \mu}$ are isometric.  However, in general, one often has $S_\mu$ and $S_\lambda$ which are isometric but $\lambda \not \in O(n,\mathbb R)\cdot \mu$.
\end{remark}

As it will be of interest later, we point out that the stabilizers of the actions of $GL_n\mathbb R$ and $\mathfrak{gl}_n\mathbb R$ have relevant meaning: $(GL_n\mathbb R)_\mu =  Aut(\mu)$ and $(\mathfrak{gl}_n\mathbb R)_\mu = Der(\mu)$.

\subsection*{The moment map and geometry of orbits}
The geometry of $GL(n,\mathbb R)$-orbits in $\mathcal V$ is intimately connected to algebraic properties of the Lie group $S_\mu$ associated to $\mu$ and geometric structures that the group can admit.  For example, consider the induced action of $SL(n,\mathbb R)$ on $V$. For $\mu \in \mathcal V$, $SL(n,\mathbb R)\cdot \mu$ is closed in $V$ if and only if $\mu$ is a semi-simple Lie algebra, see  \cite{Lauret:MomentMapVarietyLieAlgebras}.

When $\mu$ is nilpotent, it is known that so-called \textit{distinguished orbits}  (which are generalizations of closed orbits, see Definition \ref{def: disting orbits})  correspond precisely to nilpotent Lie groups which admit left-invariant Ricci soliton metrics, see Theorem \ref{thm: nilsoliton vs disting point}.  In the sequel, we show that distinguished orbits also play an role in the study of solvable Lie groups and solsolitons.\bigskip

Before defining distinguished orbits, we must define the moment map of a representation of a reductive group.  The moment map defined here works for non-compact groups and is a natural extension of the usual one defined for compact groups, cf. \cite{EberleinJablo}.

The group $GL(n,\mathbb R)$ is endowed with the Cartan involution $\theta (g) = (g^t)^{-1}$, where $* ^t$ denotes the transpose operation.  By differentiating, we have an involution on $\mathfrak{gl}(n,\mathbb R)$ as well, which we denote by the same symbol: $\theta (X) = -X^t$.

These involutions gives rise to so-called Cartan decompositions
    $$GL(n,\mathbb R) = KP    \quad  \quad  \mathfrak{gl}(n,\mathbb R) = \mathfrak k \oplus \mathfrak p$$
where $K = O(n) = \{g\in GL(n,\mathbb R) \ | \ \theta(g)=g\}$, $P = \{ g\in GL(n,\mathbb R) \ | \ \theta(g) = g^{-1} \}$, $\mathfrak k = Lie K = \mathfrak{so}(n) = \{X\in \mathfrak{gl}(n,\mathbb R) \ | \ \theta(X) = X \}$, and $\mathfrak p = symm(n)= \{X\in \mathfrak{gl}(n,\mathbb R) \ | \ \theta(X)=-X\}$.  Here $symm(n)$ denotes the symmetric $n\times n$ matrices.  Additionally, $P= exp(\mathfrak p)$, where $exp$ is the Lie group exponential.

Let $G$ be a real algebraic reductive subgroup of $GL(n,\mathbb R)$ which is $\theta$-stable.    For such groups, we obtain a Cartan decomposition $G=K_GP_G$ where $K_G = K\cap G =G^\theta = \{g\in G \ | \ \theta(g)=g \}$ is a maximal compact subgroup of $G$ and $P_G=P\cap G = \{g\in G \ | \ \theta(g) = g^{-1} \}$.  Similarly, the Lie algebra $\mathfrak g = Lie \ G$ has a Cartan decomposition $\mathfrak g = \mathfrak k_G \oplus \mathfrak p_G$.  Often we will drop the subscript $G$ when the group is understood.\bigskip

The (usual) inner product on $\mathbb R^n$ extends of an $O(n)$-invariant inner product $\ip{\ , \ }$ on the vector space $V= \wedge ^2(\mathbb R^n)^* \otimes \mathbb R^n$ as follows
    $$\ip{\lambda , \mu } = \sum_{i < j} \ip{\lambda(e_i,e_j) , \mu(e_i,e_j)} = \sum_{i<j} \ip{\lambda(e_i,e_j) , e_k} \ip{\mu(e_i,e_j),e_k}$$
where $\{e_i\}$ denotes the standard orthonormal basis of $\mathbb R^n$.  If we denote by $\pi$ the representations of $GL(n,\mathbb R)$ and $\mathfrak{gl}(n,\mathbb R)$ on $V$, then it is immediate to see that $\pi(g^t)=\pi(g)^t$ and $\pi(X^t)=\pi(X)^t$, for $g\in GL(n,\mathbb R)$ and $X\in \mathfrak{gl}(n,\mathbb R)$, where the right-hand side represents the metric adjoint with respect to the inner product $\ip{\ ,\ }$ on $V$.

On $\mathfrak{gl}_n\mathbb R$, we consider the standard inner product
    $$\ip{\alpha,\beta} = tr \ \alpha \beta^t = \sum \ip{\alpha e_i,\beta e_i}$$
where $t$ denotes transpose,  $tr$ is the trace, and $\{e_i\}$ is the standard orthonormal  basis of $\mathbb R^n$.  This inner product is $Ad(K)$-invariant and  $ad(\alpha^t) = (ad(\alpha))^t$ for $\alpha \in \mathfrak{gl}_n\mathbb R$. Observe that this inner product restricts to any $\theta$-stable subalgebra $\mathfrak g$.

Given the above choices of a Cartan involution on $\mathfrak g$, and inner products on $\mathfrak g$ and $V$, we may define the \textit{moment map} $m_G:V\backslash \{0\} \to \mathfrak p$ for the action of $G$ on $V$.  This function is defined implicitly by
    $$\ip{m_G(v),\alpha} = \frac{1}{||v||^2} \ip{\pi(\alpha)v,v}, \quad \forall \ \alpha \in \mathfrak p, v\in V$$
Observe that $m_G$ is fixed under rescaling in $V$ and is $K$-equivariant; that is, for $c\in \mathbb R$, $m(cv)=m(v)$ and for $k\in K_G$, $m_G(k\cdot v) = Ad(k) m_G(v)$.  When the group $G$ is understood, we will simply write $m = m_G$.

Using the inner product on $\mathfrak g$, we consider the norm squared of the moment map
    $$F(v) = ||m||^2 : V\backslash \{0\} \to \mathbb R$$
Notice that this function is invariant under rescaling in $V$ and so it may be viewed as a function on spheres in $V$ or on projective space $\mathbb PV$.  The critical points of this function have been extensively studied so as to develop a good understanding of the so-called `nullcone' of complex representations, see \cite{Kirwan,Ness,Marian}.  Moment maps have also been used to study general representations of non-compact real reductive groups to study the geometry of orbits, see e.g. \cite{RichSlow,EberleinJablo,JabloDistinguishedOrbits}.

\begin{defin}\label{def: disting orbits} An orbit $G\cdot v \subset V$ is called distinguished if it contains a critical point of the function $F=||m||^2$.
\end{defin}

Observe that $v\in V$ is a critical point of $F$ if and only if $\pi (m(v)) (v) = rv$ for some $r\in \mathbb R$.  It is a fact that any closed orbit is distinguished with critical value $0$ and so these orbits are a natural generalization of closed orbits, see \cite{JabloDistinguishedOrbits}.  The following theorem motivates a deeper study of $||m||^2$ on $\mathcal N \subset V$.

\begin{thm}[Lauret]\label{thm: nilsoliton vs disting point} Let $N_\mu$ denote the simply connected nilpotent Lie group with left-invariant metric whose Lie algebra $\mathfrak n_\mu$ (with inner product) corresponds to the point $\mu \in \mathcal N$.  Then $N_\mu$ is a nilsoliton if and only if $\mu$ is a critical point of $F(v)=||m||^2(v)$.  Equivalently, $N_\mu$ is an Einstein nilradical if and only if the orbit $GL_n\mathbb R \cdot \mu$ is distinguished.
\end{thm}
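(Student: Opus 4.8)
The plan is to reduce both equivalences to a single algebraic identity relating the moment map to the Ricci operator, after which everything becomes a translation between the critical-point equation and the defining equation of a nilsoliton. The central step is to show that, for $\mu\in\mathcal N$ with the fixed standard inner product, the moment map value is a positive multiple of the Ricci operator:
$$ m(\mu) = \frac{2}{||\mu||^2}\, Ric_\mu. $$
First I would verify that both sides lie in $\mathfrak p = symm(n)$: the Ricci operator is always self-adjoint with respect to the metric, and $m_G$ takes values in $\mathfrak p$ by definition. To prove the identity it suffices, by the implicit definition of $m_G$, to check that $\langle \pi(\alpha)\mu,\mu\rangle = 2\langle Ric_\mu,\alpha\rangle$ for every symmetric $\alpha$. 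I would expand
$$ \langle\pi(\alpha)\mu,\mu\rangle = \sum_{i<j}\langle \alpha\mu(e_i,e_j)-\mu(\alpha e_i,e_j)-\mu(e_i,\alpha e_j),\,\mu(e_i,e_j)\rangle $$
and compare it term-by-term with the Ricci formula of Section \ref{sec: Riem Lie groups}, using that nilpotency forces $Z=0$ (unimodularity) and makes the $tr((ad\ X)^2)$ contribution vanish. A bookkeeping check of traces provides a sanity test: pairing against $\alpha=Id$ gives $tr\ m(\mu)=\langle\pi(Id)\mu,\mu\rangle/||\mu||^2=-1$, matching $sc_\mu = -\tfrac12||\mu||^2$.

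Granting this identity, I would invoke the two elementary facts about the $\mathfrak{gl}_n\mathbb R$-action noted in the text: $\pi(Id)\mu=-\mu$, and $D\in Der(\mu)$ if and only if $\pi(D)\mu=0$ (since $Der(\mu)$ is the stabilizer of $\mu$). Recall from the remark after Definition \ref{def: disting orbits} that $\mu$ is a critical point of $F=||m||^2$ exactly when $\pi(m(\mu))\mu = r\mu$ for some $r\in\mathbb R$. Substituting the identity, this reads $\pi(Ric_\mu)\mu = s\mu$ with $s = \tfrac12||\mu||^2\,r$. Now if $Ric_\mu = cId + D$ with $D\in Der(\mu)$, then $\pi(Ric_\mu)\mu = c\,\pi(Id)\mu + \pi(D)\mu = -c\mu$, so $\mu$ is critical. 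Conversely, from $\pi(Ric_\mu)\mu = s\mu$ I would set $c=-s$ and $D = Ric_\mu - cId$; then $\pi(D)\mu = s\mu - c(-\mu) = (s+c)\mu = 0$, so $D$ is a derivation and $Ric_\mu = cId + D$. This yields the first equivalence: $N_\mu$ is a nilsoliton if and only if $\mu$ is a critical point of $F$.

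For the second (``equivalently'') statement, I would recall that $N_\mu$ is an Einstein nilradical precisely when the Lie algebra $\mathfrak n_\mu$ admits \emph{some} nilsoliton inner product. Using the framework of Section \ref{sec: variety of Lie brackets}, varying the inner product on $\{\mathbb R^n,\mu\}$ corresponds to moving $\mu$ within its orbit: the metric Lie algebra $\{\mathbb R^n, g\cdot\mu, \langle\cdot,\cdot\rangle\}$ is isomorphic to $\mathfrak n_\mu$ with the transported inner product. Hence $\mathfrak n_\mu$ carries a nilsoliton metric if and only if some $g\cdot\mu$ is a nilsoliton for the standard inner product, i.e.\ (by the first part) some point of $GL_n\mathbb R\cdot\mu$ is a critical point of $F$, which is exactly the statement that the orbit is distinguished (Definition \ref{def: disting orbits}).

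I expect the main obstacle to be the moment-map/Ricci identity of the first paragraph. The translation steps are clean linear algebra once $\pi(Id)\mu=-\mu$ and the derivation characterization are in hand, but matching $\langle\pi(\alpha)\mu,\mu\rangle$ against the full Ricci formula for all symmetric $\alpha$ requires careful index bookkeeping, correct handling of the antisymmetry of $\mu$ (so that $\sum_{i<j}=\tfrac12\sum_{i,j}$), and the nilpotency input that makes the second Ricci term vanish. Pinning down the universal constant (here $2$, which depends on the chosen normalization of the inner product on $V$) is the one place where an arithmetic slip is easy.
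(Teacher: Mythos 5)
Your proof is correct, but there is nothing in the paper to compare it against: Theorem \ref{thm: nilsoliton vs disting point} is stated as a quoted result, with \cite{Lauret:EinsteinSolvandNilsolitonsCordobaConf2007} cited in place of a proof. What you have produced is a self-contained reconstruction of the standard argument from that literature, and it holds together. In particular, your key identity is the scale-correct form of the paper's own Remark \ref{rem: geometric interp of moment map}: since $m$ is invariant under rescaling of $\mu$ while $Ric_\mu$ is quadratic in $\mu$, the remark's statement $m(\mu)=4\,Ric(N_\mu)$ can only hold on a fixed sphere, whereas with the paper's convention $\ip{\lambda,\mu}=\sum_{i<j}\ip{\lambda(e_i,e_j),\mu(e_i,e_j)}$ one gets $sc_\mu=-\tfrac{1}{2}||\mu||^2$ and your identity $m(\mu)=\frac{2}{||\mu||^2}Ric_\mu$ is the correct general statement, reducing to the remark's constant $4$ exactly at the normalization $sc_\mu=-\tfrac{1}{4}$ used elsewhere in the paper (e.g.\ in the proof of Proposition \ref{prop: admit solsoliton implies distinguished orbit}); your trace sanity check against $\alpha=Id$ confirms this. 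The verification of the identity goes through as you describe, with nilpotency killing both the $Z$-term and the $tr((ad\ X)^2)$-term in the Ricci formula, and the remaining two terms of $\ip{\pi(\alpha)\mu,\mu}$ matching twice the traces of $\alpha$ against the two surviving Ricci summands. Your translation between the soliton equation and the critical-point equation (via $\pi(Id)\mu=-\mu$, $Der(\mu)=(\mathfrak{gl}_n\mathbb R)_\mu$, and the observation following Definition \ref{def: disting orbits}) is clean, and your reading of the ``equivalently'' clause is the intended one: ``Einstein nilradical'' is used here as synonymous with ``$\mathfrak n_\mu$ admits some nilsoliton inner product,'' and the dictionary of Section \ref{sec: variety of Lie brackets}, identifying a change of inner product on a fixed bracket with a motion of $\mu$ inside its $GL_n\mathbb R$-orbit, converts that condition verbatim into the existence of a critical point of $F$ in the orbit, i.e.\ into the orbit being distinguished.
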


In this way, we convert our questions of left-invariant metrics on Lie groups into questions about the geometry of orbits in the space $V$.  By analyzing the geometry of orbits, we obtain our algorithm that determines when a given nilpotent Lie group is an Einstein nilradical, see Section \ref{sec: alg for nilsoliton}.

The above theorem can be found in \cite{Lauret:EinsteinSolvandNilsolitonsCordobaConf2007}.  The last equivalence is not stated using the label of distinguished orbit but is stated using the idea.  In Section 10 of \cite{Lauret:EinsteinSolvandNilsolitonsCordobaConf2007} there are several open questions of interest which are presented.  We state Question \# 5 from this list.

\begin{question}\label{question: gradient flow of einstein nilradical}  Consider the function $F: V \to \mathbb R$ defined by $F(v) = ||m(v)||^2$ where  $m$ is the moment map, as above.  Define $\mu_t$ to be the integral curve of $-grad\ F$ starting at $\mu_0$ on the sphere of radius 2.  Is $\mu_\infty$ (the limit point along the integral curve) contained in the orbit $GL_n\mathbb R \cdot \mu_0$ if $N_{\mu_0}$ is an Einstein nilradical?
\end{question}

In Theorem \ref{thm: einstein nilradical isomorphic to limit under gradient flow} we obtain an affirmative answer to this question.  This result first appeared more generally in \cite{JabloDistinguishedOrbits}, where this was shown to be true for distinguished orbits in any real reductive  algebraic representation.

\begin{remark}\label{rem: geometric interp of moment map}Geometrically, the moment map can be understood as follows.  When $\mu$ is a nilpotent Lie algebra, $m(\mu) = 4 Ric (N_\mu)$.  Moregenerally, if $\mu$ is any Lie algebra with corresponding Lie group $S_\mu$, then $m(\mu) = 4 R$ where $R$ is the tensor appearing in the formula
	$$Ric = R-\frac{1}{2}B-S(ad\ H)$$
here $Ric$ is the Ricci tensor of $S_\mu$, $B$ is the Killing form of the Lie algebra $\mu$ and $S(ad\ H) = \frac{1}{2}(ad\ H + ad\ H^t)$, where $ad\ H$ is a mean curvature vector.  See \cite[Corollary 7.38]{Besse:EinsteinMflds} and \cite[Section 4]{Lauret:SolSolitons} for more details.
\end{remark}

\section{Soliton metrics and distinguished orbits}\label{sec: solitons vs distinguished orbits}

\subsection*{Soliton metrics on nilpotent Lie groups}

In the above section, we stated the relationship between nilsoliton metrics and critical points of the function $F=||m||^2$: they are precisely the same thing, see Theorem \ref{thm: nilsoliton vs disting point}.  As such, we are motivated to study the negative gradient flow of $F$.

\begin{defin}[The bracket flow]\label{def: bracket flow} Let $\mu_t \subset V$ denote the negative gradient flow of $F$ starting at $\mu_0\in V$.
\end{defin}

In Proposition \ref{prop: unique limit point}, it will be seen that the limit of this flow is unique.  We denote this limit point by  $\mu_\infty$.

\begin{thm}\label{thm: einstein nilradical isomorphic to limit under gradient flow} Let $N_{\mu_0}$ be an Einstein nilradical.  Let $\mu_\infty$ denote the limit point of the negative gradient flow of the function $F$ starting at $\mu_0$.  Then $\mu_\infty$ is contained in the orbit $GL_n\mathbb R \cdot \mu_0$; i.e. $N_{\mu_0}$ and $N_{\mu_\infty}$ are isomorphic Lie groups.
\end{thm}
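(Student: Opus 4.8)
The plan is to combine three ingredients: that the gradient of $F$ is tangent to the $GL_n\mathbb{R}$-orbits, that the negative gradient flow converges to a critical point, and that for a \emph{distinguished} orbit the infimum of $F$ over the orbit closure is already attained inside the orbit. First I would compute $\mathrm{grad}\,F$ for $F=\|m\|^2$. Using the defining relation of the moment map and the identity $\pi(\alpha^t)=\pi(\alpha)^t$, a direct computation shows that $\mathrm{grad}_v F$ lies in the tangent space $\pi(\mathfrak{gl}_n\mathbb{R})v$ to the orbit: it is built from $\pi(m(v))v$ with $m(v)\in\mathfrak p\subset\mathfrak{gl}_n\mathbb{R}$, together with a possible scaling correction in $\mathbb{R}v$, which is also orbit-tangent since scalings lie in $GL_n\mathbb{R}$. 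Hence the entire bracket-flow trajectory remains inside $\mathcal{O}:=GL_n\mathbb{R}\cdot\mu_0$ for all finite $t$, so $\mu_\infty\in\overline{\mathcal{O}}$.

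Next, by Proposition \ref{prop: unique limit point} the flow has a unique limit $\mu_\infty$, and as the limit of a negative gradient trajectory of the real-analytic function $F$ it is a critical point, i.e. $\pi(m(\mu_\infty))\mu_\infty=r\mu_\infty$ for some $r\in\mathbb{R}$. Since $N_{\mu_0}$ is an Einstein nilradical, Theorem \ref{thm: nilsoliton vs disting point} shows $\mathcal{O}$ is distinguished, so by Definition \ref{def: disting orbits} it contains a critical point $\nu$. The remaining task, and the heart of the matter, is to upgrade $\mu_\infty\in\overline{\mathcal{O}}$ to $\mu_\infty\in\mathcal{O}$.

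For this I would invoke the real Kempf--Ness/Ness theory for reductive representations \cite{JabloDistinguishedOrbits,EberleinJablo,Ness,Marian}. Using $K$-equivariance of the flow and $K$-invariance of $F$, together with the Cartan decomposition $GL_n\mathbb{R}=KP$, I reduce to the case $\mu_0=\pi(\exp X)\nu$ with $X\in\mathfrak p$. Convexity of the Kempf--Ness function $p\mapsto\|\pi(p)\nu\|^2$ along geodesics of the symmetric space $P\cong GL_n\mathbb{R}/K$ shows that the critical points of $F$ lying in a single orbit are exactly its minima and form one $K$-orbit, and that the minimal critical value equals $\beta^2:=\inf_{\overline{\mathcal{O}}}F$, attained on a unique orbit inside $\overline{\mathcal{O}}$. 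The distinguished hypothesis places the critical point $\nu$, hence this minimizing orbit, inside $\mathcal{O}$ itself. Since the flow decreases $F$ within $\mathcal{O}$ and converges to a critical point, one obtains $F(\mu_\infty)=\inf_{\mathcal{O}}F=\beta^2$; thus $\mu_\infty$ attains the closure-infimum and must lie on that unique minimizing orbit, namely $\mathcal{O}$. This yields $\mu_\infty\in\mathcal{O}$, i.e. $N_{\mu_0}\cong N_{\mu_\infty}$.

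The main obstacle is precisely this last step: ruling out that the trajectory degenerates to a strictly smaller boundary orbit of $\overline{\mathcal{O}}$ carrying a smaller value of $F$. Such degeneration is exactly what occurs for non-Einstein nilradicals, so the argument must use the distinguished hypothesis in an essential way, through the characterization that $\mathcal{O}$ is distinguished if and only if $\inf_{\overline{\mathcal{O}}}F$ is attained on $\mathcal{O}$, rather than through any soft closure or compactness reasoning.
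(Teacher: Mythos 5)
Your Steps 1 and 2 are fine: the gradient of $F$ is tangent to the $GL_n\mathbb{R}$-orbits, so the trajectory stays in $\mathcal{O}=GL_n\mathbb{R}\cdot\mu_0$ for all finite time, and $\mu_\infty\in\overline{\mathcal{O}}$ is a critical point by Proposition \ref{prop: unique limit point}. The genuine gap is in Step 3, at the sentence ``Since the flow decreases $F$ within $\mathcal{O}$ and converges to a critical point, one obtains $F(\mu_\infty)=\inf_{\mathcal{O}}F$.'' This is a non sequitur: a negative gradient trajectory need not converge to a minimum; it can converge to a critical point at a strictly higher critical level, and here such critical points are available in the boundary of the orbit. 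Concretely, write $\mu_0\in S_\beta$; the critical points of $F$ on the distinguished orbit $\mathcal{O}$ lie in $C_\beta$ with value $\|\beta\|^2$, while $\overline{\mathcal{O}}\setminus S_\beta$ is contained in strata $S_{\beta'}$ with $\|\beta'\|>\|\beta\|$, whose critical sets $C_{\beta'}$ carry the strictly larger value $\|\beta'\|^2$ (Theorem \ref{thm: stratification of V}). Nothing in your argument excludes $\mu_\infty\in C_{\beta'}$: monotonicity of $F$ along the flow is consistent with this whenever $F(\mu_0)\geq\|\beta'\|^2$. Ruling out exactly this leakage into a deeper stratum is the entire content of the theorem. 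Moreover, convexity of the Kempf--Ness function $g\mapsto\|\pi(g)\nu\|^2$ is a statement about a single orbit --- it yields parts (i) and (ii) of Theorem \ref{thm: limit under grad flow}, which you use correctly --- but it cannot by itself justify your claim that $\inf_{\overline{\mathcal{O}}}F$ is attained on a unique orbit of the closure: distinct orbits inside $\overline{\mathcal{O}}$ are not joined by geodesics of $GL_n\mathbb{R}/O(n)$ applied to a fixed vector, so any statement about the closure requires the degeneration/stratification analysis, not geodesic convexity.

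For comparison, the paper closes this gap with the stratification machinery rather than with minimization. Its proof (the corollary to Theorem \ref{thm: orbit closure in statum}) runs: the flow stays in $\mathcal{O}\subset S_\beta$, and by the coincidence of the Morse-theoretic and algebraic stratifications (Heinzner--Schwarz--St\"otzel, packaged here as Theorem \ref{thm: orbit closure in statum}) the limit $\mu_\infty$ still lies in $S_\beta$; since $\mathcal{O}$ is distinguished, that same theorem says $\mathcal{O}$ is closed in $S_\beta$, whence $\mu_\infty\in\overline{\mathcal{O}}\cap S_\beta=\mathcal{O}$. Alternatively one may simply quote Theorem \ref{thm: limit under grad flow}(iii), $\omega(G\cdot p)\subset K_G\cdot p$ for critical $p$, which is precisely the statement you are trying to re-derive; citing it (or the general result of \cite{JabloDistinguishedOrbits}) would be legitimate, but then the theorem is immediate and the rest of your argument is superfluous. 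As written, your proposal re-asserts this hard ingredient at the crucial moment --- disguised as a consequence of monotonicity plus convexity --- rather than proving it.
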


A more general result of this type is true for distinguished orbits and is useful for studying the geometry of solvable Lie groups, see Section \ref{sec: bracket flow}. 
In the setting of nilpotent Lie algebras, the proof can be shortened dramatically by employing the stratification results of Lauret (see Theorem \ref{thm: stratification of V}).  In addition, we obtain some new and interesting geometric results on isometry and automorphism groups of nilpotent Lie groups using the techniques from this proof, see Corollary \ref{cor: nilsoliton have max isom grp} and Proposition \ref{prop: Aut at soliton}.\bigskip

The above relationship between Einstein nilradicals and distinguished orbits has been studied extensively in the literature, see e.g. \cite{LauretNilsoliton,Will:Rank1EinsteinSolvOfDim7,
Payne:ExistenceofSolitononNil,LauretWill:EinsteinSolvExistandNonexist,
Eberlein:prescribedRicciTensor,JabloDistinguishedOrbits,
Nikolayevsky:EinsteinSolvmanifoldsandPreEinsteinDerivation,Nikolayevsky:EinsteinSolvmanifoldsattchedtoTwostepNilradicals,
Nikolayevsky:EinsteinSolvwithSimpleEinsteinDerivation,Nikolayevsky:EinsteinSolvwithFreeNilradical,
Will:CurveOfNonEinsteinNilradicals,Jablo:ModuliOfEinsteinAndNoneinstein}.  Motivated by this, we explore the relationship between distinguished orbits and soliton metrics on solvable groups.

\begin{defin}A Riemannian Lie group $S_\mu$ is said to have a distinguished metric if $\mu$ is a critical point of $F=||m||^2$ for the action of $GL(n,\mathbb R)$ on $V$ (defined above).
\end{defin}

For a geometric interpretation of the moment map, see Remark \ref{rem: geometric interp of moment map}.

\subsection*{Einstein and soliton metrics on solvable Lie groups}

\begin{prop}\label{prop: admit solsoliton implies distinguished orbit} If $S_\mu$ is a solvable group admitting an Einstein or solsoliton metric, then $GL(n,\mathbb R)\cdot \mu$ is a distinguished orbit.
\end{prop}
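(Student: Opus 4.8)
The plan is to exhibit a critical point of $F=||m||^2$ inside the orbit $GL(n,\mathbb{R})\cdot\mu$. The first step is to reformulate criticality algebraically. Since $\pi(cId)\mu=-c\mu$ and $\pi(D)\mu=0$ precisely when $D\in Der(\mu)$, the condition $\pi(m(\nu))\nu=r\nu$ defining a critical point $\nu$ is equivalent to $m(\nu)\in\mathbb{R}\,Id+Der(\nu)$. Thus it suffices to find a representative $\nu\in GL(n,\mathbb{R})\cdot\mu$ whose (symmetric) moment-map image $m(\nu)$ differs from a scalar operator by a derivation.

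Next I would feed in the structure of the soliton. By Theorem \ref{thm: lauret solsolitons criteria} the metric Lie algebra splits as $\mathfrak{g}=\mathfrak{a}\oplus\mathfrak{n}$ with $\mathfrak{n}$ the nilradical carrying a nilsoliton metric, $\mathfrak{a}=\mathfrak{n}^\perp$ abelian, and $(ad\ A)^t\in Der(\mathfrak{g})$ for all $A\in\mathfrak{a}$; moreover $Ric=cId+D_0$ with $D_0\in Der(\mathfrak{g})$ (and $D_0=Ric-cId$ symmetric). Using the geometric description of the moment map in Remark \ref{rem: geometric interp of moment map}, namely $m(\mu)=4R$ where $Ric=R-\tfrac12 B-S(ad\ H)$, I would rewrite $m(\mu)=4cId+(4D_0+4S(ad\ H))+2B$. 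Here the mean-curvature term is a derivation: the mean-curvature vector $H$ lies in $\mathfrak{a}$ (since $tr\,ad\ X=0$ for $X\in\mathfrak{n}$, forcing $H\perp\mathfrak{n}$), whence $ad\ H$ is an inner derivation and $(ad\ H)^t\in Der(\mathfrak{g})$ by the normality condition, so $S(ad\ H)\in Der(\mathfrak{g})$. Consequently $m(\mu)$ already lies in $\mathbb{R}\,Id+Der(\mathfrak{g})$ modulo the single term $2B$.

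The hard part will be the Killing-form term $B$, which is not a derivation: for solvable $\mathfrak{g}$ the operator $B$ annihilates $\mathfrak{n}$ and maps $\mathfrak{g}$ into $\mathfrak{a}$, and a direct check (already for a one-dimensional extension of a non-abelian nilradical) shows $\pi(B)\mu$ is not a multiple of $\mu$. Hence the soliton metric itself is generally \emph{not} the critical point, and the critical point must be sought at another representative of the orbit. To locate it I would reduce to the nilradical: since $(\mathfrak{n},g)$ is a nilsoliton, Theorem \ref{thm: nilsoliton vs disting point} gives that $\mu|_{\mathfrak{n}\times\mathfrak{n}}$ is a critical point for the $GL(\mathfrak{n})$-action, so that orbit is distinguished. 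I would then lift this to $\mathfrak{g}$ by deforming the metric along $\mathfrak{a}$ (equivalently, acting by a diagonal element of $GL(n,\mathbb{R})$ rescaling $\mathfrak{a}$ relative to $\mathfrak{n}$), tuning the scale so that the block contributions of $m$ on $\mathfrak{a}$ and on $\mathfrak{n}$ align to a common eigenvalue on $\mu$. The normality condition $[ad\ A,(ad\ A)^t]=0$ and the normalization $g(A,A)=-\tfrac1c\,tr\,S(ad\ A)^2$ from Theorem \ref{thm: lauret solsolitons criteria} are exactly what should make this balancing possible; proving that the resulting representative is a genuine critical point of $||m||^2$, so that the Killing-form contribution of the rescaled $\mathfrak{a}$ is absorbed, is the technical core, and is where I would either carry out the explicit computation or invoke the general distinguished-orbit machinery of \cite{JabloDistinguishedOrbits}.
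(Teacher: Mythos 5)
Your setup and diagnosis are correct and, in outline, match the paper's strategy: criticality of $\nu$ is equivalent to $m(\nu)\in\mathbb R\,Id+Der(\nu)$, the mean-curvature term $S(ad\ H)$ is a derivation (since $H\in\mathfrak a$ and $(ad\ H)^t\in Der(\mathfrak g)$), the Killing-form term is exactly what prevents the solsoliton itself from being critical, and the critical point must therefore be sought at a representative of the orbit obtained by changing the metric only on $\mathfrak a=\mathfrak n^\perp$. This is indeed how the paper proceeds. However, your construction of that representative has a genuine gap, in two respects. First, the deformation you propose --- a one-parameter rescaling of $\mathfrak a$ against $\mathfrak n$ --- cannot reach the critical point in general. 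The critical (``distinguished'') inner product on $\mathfrak a$ is $\ip{\ip{A,A}}=\frac{-1}{2c}\,tr\bigl(ad\ A\,(ad\ A)^t\bigr)$, whereas the solsoliton one is $\ip{A,A}=\frac{-1}{c}\,tr\,S(ad\ A)^2$; writing $ad\ A=S_A+K_A$ (symmetric plus skew, commuting by normality), these equal $\frac{-1}{2c}\bigl(tr\,S_A^2-tr\,K_A^2\bigr)$ and $\frac{-1}{c}\,tr\,S_A^2$ respectively, so they are not proportional as quadratic forms on $\mathfrak a$ unless the skew parts vanish, i.e.\ precisely in the completely solvable case (Remark \ref{rmk: comp solv unimod - dist metric vs solsoliton and isometry groups}). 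For a general solvable group no choice of a single scale aligns them: one must replace the entire inner product on $\mathfrak a$ by a different quadratic form, not rescale it.

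Second, and more importantly, the step you explicitly defer --- verifying that the modified representative is a genuine critical point of $||m||^2$ --- is where all the content lies, and neither of your fallbacks closes it: \cite{JabloDistinguishedOrbits} is invoked by the paper only to transfer Lauret's complex-case results to the real setting, not as machinery that produces critical points. The paper closes this step with no computation at all, by quoting Lauret's explicit characterization of the critical points of $||m||^2$ lying over solvable Lie algebras, \cite[Theorem 4.7]{Lauret:MomentMapVarietyLieAlgebras}: these are exactly the metric algebras $\mathfrak a\oplus\mathfrak n$ with $\mathfrak n$ a nilsoliton, $\mathfrak a$ abelian acting by normal derivations, and the inner product on $\mathfrak a$ given by the formula above. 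Comparing this list with Lauret's structure theorem for solsolitons (\cite[Theorem 4.8]{Lauret:SolSolitons}, i.e.\ Theorem \ref{thm: lauret solsolitons criteria}) shows the two sets of data agree except for the metric on $\mathfrak a$, and changing that metric stays inside $GL(n,\mathbb R)\cdot\mu$; that comparison \emph{is} the proof. Without this input, or an equivalent direct verification of criticality, your argument does not go through. Finally, note that your normalization $-1/c$ presupposes $c<0$, so the flat case ($c=0$, abelian nilradical, skew-symmetric $\mathfrak a$) is outside your scheme and needs separate treatment; the paper handles it by the same citation, which shows the flat metric is itself a critical point.
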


\begin{proof} To prove this, one consults the work \cite{Lauret:MomentMapVarietyLieAlgebras} where complex Lie algebras are studied. All the results of that paper remain true for real Lie algebras with the Hermitian transpose replaced with the usual transpose.  For a detailed proof of this fact, see \cite{JabloDistinguishedOrbits}.  We warn the reader that the moment map defined there is a multiple of the moment map defined here.  If $n$ denotes the moment map from \cite{Lauret:MomentMapVarietyLieAlgebras} and $m$ denotes the moment map used in this work, then $n=2m$.  Our choice of moment map $m$ is consistent with \cite{Lauret:EinsteinSolvandNilsolitonsCordobaConf2007,Lauret:SolSolitons}

\bigskip

Case 1: $S_\mu$ admits a flat metric.  If $\mu$ corresponds to the flat metric, then $\mu$ is also a critical point of $F =||m||^2$, see \cite[Theorem 4.7]{Lauret:MomentMapVarietyLieAlgebras}.

\bigskip

Case 2: $S_\mu$ admits a non-flat solsoliton.  We only prove this in the case that the nilradical of $\mathfrak s_\mu$ is non-abelian.  The abelian case is similar and we leave it to the diligent reader.

The proof of this case is just a careful comparison of \cite[Theorem 4.8]{Lauret:SolSolitons} with \cite[Theorem 4.7]{Lauret:MomentMapVarietyLieAlgebras}.  The soliton metric and the distinguished metric differ only in their values on $\mathfrak a \times \mathfrak a$, where $\mathfrak a = \mathfrak n^\perp$.  If the nilradical (which is a nilsoliton in either case) satisfies $Ric_\mathfrak n = cId +D$, for some $D\in Der(\mathfrak n)$, and has $sc=-1/4$, then the solsoliton metric on $\mathfrak a$ is
    $$\ip{A,A} = \frac{-1}{c} tr\ S(ad\ A)^2$$
where $S(ad\ A)$ is the symmetric part of $ad\ A$, while the distinguished metric on $\mathfrak a$ is
    $$\ip{\ip{A,A}}= \frac{1}{2}\cdot \frac{-1}{c}tr(\ ad\ A\ (ad\ A)^t)$$
In \cite{Lauret:MomentMapVarietyLieAlgebras}, $\mathfrak a$ is viewed as a subset of $Der(\mathfrak n)$ with $A \simeq ad\ A$.
\end{proof}

\begin{remark}\label{rmk: comp solv unimod - dist metric vs solsoliton and isometry groups}
Observe that when $S_\mu$ is completely solvable,  a stronger statement is true.  In this case,
    $$\ip{\ip{\ , \ }} = \frac{1}{2} \ip{\ , \ } \quad \quad \mbox{on } \mathfrak a \times \mathfrak a$$
and the algebraic isometry groups (cf. Definition \ref{def: alg isom group}) are equal: $Aut(\mu) \cap O(\ip{\ip{\ , \ }}) = Aut(\mu) \cap O(\ip{\ , \ })$.
\end{remark}

\begin{thm}\label{thm: solsolitons and distinguished orbits} Let $S_\mu$ be a completely solvable group.  Then $S_\mu$ admits a solsoliton if and only if $GL(n,\mathbb R)\cdot \mu$ is a distinguished orbit.  Moreover, there is a curve of metrics between the distinguished metric and the solsoliton metric which preserves their algebraic isometry groups.

In particular, when  $S_\mu$ is unimodular, we see that these two Riemannian Lie groups have the same isometry groups.
\end{thm}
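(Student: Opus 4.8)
The plan is to prove the two implications separately, then build the connecting curve, and finally deduce the isometry statement from Gordon--Wilson. The forward implication---if $S_\mu$ admits a solsoliton then $GL(n,\mathbb R)\cdot\mu$ is distinguished---is exactly Proposition~\ref{prop: admit solsoliton implies distinguished orbit}, so nothing new is needed there. For the converse I would pick a critical point $\mu'\in GL(n,\mathbb R)\cdot\mu$ of $F=||m||^2$ (which exists by Definition~\ref{def: disting orbits}). Since $\mu'\cong\mu$ it is again completely solvable, and the structural description of critical points among solvable brackets (Theorem~4.7 of \cite{Lauret:MomentMapVarietyLieAlgebras}) gives that $\mathfrak s_{\mu'}=\mathfrak a\oplus\mathfrak n$ has $\mathfrak n$ the nilradical carrying an induced nilsoliton metric, $\mathfrak a=\mathfrak n^\perp$ abelian, and $(ad\ A)^t\in Der(\mu')$ for all $A\in\mathfrak a$. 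In the completely solvable case the normality $[ad\ A,(ad\ A)^t]=0$ together with $ad\ A$ having only real eigenvalues forces $ad\ A$ to be symmetric, so $S(ad\ A)=ad\ A$ and $tr\ S(ad\ A)^2=tr(ad\ A\,(ad\ A)^t)$. Comparing Theorem~4.7 of \cite{Lauret:MomentMapVarietyLieAlgebras} with Lauret's solsoliton criterion (Theorem~\ref{thm: lauret solsolitons criteria}), the distinguished metric $\ip{\ip{\cdot,\cdot}}$ and a genuine solsoliton metric $\ip{\cdot,\cdot}$ differ only on $\mathfrak a\times\mathfrak a$, precisely by the factor recorded in Remark~\ref{rmk: comp solv unimod - dist metric vs solsoliton and isometry groups}, namely $\ip{\ip{\cdot,\cdot}}=\tfrac12\ip{\cdot,\cdot}$. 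Rescaling the inner product on $\mathfrak a$ by $2$, while leaving $\mathfrak n$ and the orthogonality $\mathfrak a\perp\mathfrak n$ fixed, produces a left-invariant metric on $S_{\mu'}\cong S_\mu$; I would then verify conditions (a)--(d) of Theorem~\ref{thm: lauret solsolitons criteria}, noting (a),(b) are untouched and (c) is unaffected because $ad\ A$ annihilates $\mathfrak a$ and preserves $\mathfrak n$, so $(ad\ A)^t$ depends only on the unchanged metric on $\mathfrak n$, while (d) holds by construction. Hence $S_\mu$ admits a solsoliton.

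For the connecting curve, define $g_t$ to be the block metric agreeing with the solsoliton on $\mathfrak n\times\mathfrak n$ and (off-diagonally) on $\mathfrak a\times\mathfrak n$, and equal to $t\,\ip{\cdot,\cdot}$ on $\mathfrak a\times\mathfrak a$, for $t\in[\tfrac12,1]$; then $g_1$ is the solsoliton and $g_{1/2}$ the distinguished metric, and each $g_t$ keeps $\mathfrak a\perp\mathfrak n$ so that $\mathfrak n^\perp=\mathfrak a$ throughout. The key point is that the algebraic isometry group $Aut(\mu)\cap O(g_t)$ is independent of $t$: any $\phi\in Aut(\mu)$ preserves the nilradical $\mathfrak n$ as a characteristic ideal, and if $\phi$ is $g_t$-orthogonal it preserves $\mathfrak a=\mathfrak n^\perp$ as well, restricting to orthogonal maps on each block. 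Rescaling the $\mathfrak a$-block by a positive constant does not affect whether $\phi|_{\mathfrak a}$ is orthogonal, so membership in $O(g_t)$ is the same for every $t$. Thus $g_t$ connects the distinguished metric to the solsoliton while fixing their common algebraic isometry group (Definition~\ref{def: alg isom group}).

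Finally, when $S_\mu$ is completely solvable and unimodular, Gordon--Wilson (Theorem~\ref{thm: isom of compl solv unimod}) identifies the full isometry group of any left-invariant metric $g$ with its algebraic isometry group $S_\mu\rtimes(Aut(\mu)\cap O(g))$; applying this to $g_{1/2}$ and $g_1$ and using the equality established above yields $Isom(S_\mu,\ip{\ip{\cdot,\cdot}})=Isom(S_\mu,\ip{\cdot,\cdot})$. I expect the main obstacle to be the converse implication, specifically extracting the full structural normal form of Theorem~4.7 of \cite{Lauret:MomentMapVarietyLieAlgebras} from the critical-point condition and confirming that after rescaling on $\mathfrak a$ all four solsoliton conditions genuinely hold; the curve and the Gordon--Wilson conclusion are then essentially bookkeeping. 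As in Proposition~\ref{prop: admit solsoliton implies distinguished orbit}, the abelian-nilradical case should be treated separately but is routine.
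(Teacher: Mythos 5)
Your proposal is correct and follows essentially the same route as the paper: the forward direction via Proposition \ref{prop: admit solsoliton implies distinguished orbit}, the converse by comparing Lauret's critical-point structure theorem with his solsoliton criteria and rescaling the metric on $\mathfrak a$ by the factor $2$ (the content of Remark \ref{rmk: comp solv unimod - dist metric vs solsoliton and isometry groups}), and the unimodular claim via Gordon--Wilson. The only difference is one of presentation: the paper leaves these steps as "the above observations" and defers the isometry statement to Theorem \ref{thm: comp solv unimod solsoliton have max isom grp}, whereas you spell out the verification of conditions (a)--(d) and the constancy of the algebraic isometry group along the rescaling curve explicitly.
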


The claims in the first paragraph follow from the above observations.  The last claim will be proved in Theorem \ref{thm: comp solv unimod solsoliton have max isom grp}.

\begin{remark} There are solvable groups which admit a distinguished metric, but cannot admit a solsoliton.  For example, if $\mathfrak n$ is a non-abelian Einstein nilradical and $\mathfrak a \subset Der(\mathfrak n)$ is an abelian subalgebra of skew-symmetric endomorphisms, then $S_\mu$ with $\mathfrak s_\mu = \mathfrak a \ltimes \mathfrak n$ cannot admit a solsoliton but does admit a distinguished metric.  See Theorem \ref{thm: classification of solv admitting negative Einstein} and \cite[Theorem 4.7]{Lauret:MomentMapVarietyLieAlgebras}.
\end{remark}

The following has been shown for nilpotent groups in \cite{Nikolayevsky:EinsteinSolvmanifoldsandPreEinsteinDerivation} and \cite{Jablo:FinitenessTheorem-compatibleSubgroups}, but has not appeared in the literature for solvable groups in general.  The corollary which follows is of particular interest and it would be interesting to know if there is an analogous statement for compact homogeneous spaces admitting Einstein metrics.

\begin{thm}\label{thm: solv admit solsoliton reduces to irreducible components} Let $S_\mu$ be a solvable Lie group whose Lie algebra $\mathfrak s_\mu = \mathfrak s_{\mu_1} + \mathfrak s_{\mu_2}$ is a direct sum of ideals.  Then $S_\mu$ admits a non-flat solsoliton, resp. flat, metric if and only if both $S_{\mu_1}$ and $S_{\mu_2}$ admit non-flat solsoliton, resp. flat, metrics.
\end{thm}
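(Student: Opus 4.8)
The plan is to pass to the variety of Lie brackets, reformulate the statement as a decomposition principle for distinguished orbits, prove it there, and translate back through Lauret's structural characterization (Theorem \ref{thm: lauret solsolitons criteria}). I realize the splitting $\mathfrak s_\mu = \mathfrak s_{\mu_1}\oplus\mathfrak s_{\mu_2}$ as an orthogonal decomposition $\mathbb R^n = \mathbb R^{n_1}\oplus\mathbb R^{n_2}$ of the fixed inner product space, so that $\mu = \mu_1\oplus\mu_2$ is block diagonal and the subspace $W\subset V$ of block diagonal brackets is stable under the block diagonal subgroup $GL(n_1)\times GL(n_2)\subset GL(n,\mathbb R)$. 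The reverse implication is the elementary construction: given non-flat solsolitons $g_i$ on $S_{\mu_i}$ with $Ric = c_i\,Id + D_i$, I rescale so that $c_1 = c_2 = c$ (possible as $c_i < 0$ by Theorem \ref{thm: lauret solsolitons criteria}) and form the orthogonal product metric $g = g_1\oplus g_2$. Since $[\mathfrak s_{\mu_1},\mathfrak s_{\mu_2}] = 0$, the group $S_\mu$ is the Riemannian product $S_{\mu_1}\times S_{\mu_2}$, so $Ric = Ric_{g_1}\oplus Ric_{g_2} = c\,Id + (D_1\oplus D_2)$ with $D_1\oplus D_2\in Der(\mathfrak s_\mu)$, giving a solsoliton; the flat case is identical since a product of flat metrics is flat.

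For the forward implication I first reduce to the moment map: by Proposition \ref{prop: admit solsoliton implies distinguished orbit}, if $S_\mu$ admits a solsoliton then $GL(n,\mathbb R)\cdot\mu$ is distinguished. The key computation is that the moment map respects the block structure. For block diagonal $\nu = \nu_1\oplus\nu_2$ one checks from the defining identity $\ip{m(\nu),\alpha} = \|\nu\|^{-2}\ip{\pi(\alpha)\nu,\nu}$ that $\ip{\pi(\alpha)\nu,\nu}$ vanishes whenever $\alpha$ is an off-diagonal symmetric endomorphism; hence $m(\nu) = m_1\oplus m_2$ is block diagonal, each block a positive multiple of the corresponding $GL(n_i)$-moment map. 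Consequently $\pi(m(\nu))\nu$ splits blockwise, and the critical point equation $\pi(m(\nu))\nu = r\nu$ forces each $\nu_i$ to be a critical point of the corresponding norm-squared moment map for the $GL(n_i)$-action (the converse holding after an independent rescaling of the two blocks to match critical values). Pairing $\pi(m(\nu))\nu = r\nu$ with $\nu$ shows $r = \nms \ge 0$, vanishing exactly when $\nu$ is flat, so block diagonality of $m$ forces both factors to share the flat-or-non-flat type of $\nu$; this reproduces the ``\emph{resp.}'' dichotomy of the statement.

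Next I produce a block diagonal critical point inside the orbit. Since $m(\nu)$ is block diagonal along $W$, the negative gradient of $F = \nms$ is tangent to $W$, so the bracket flow (Definition \ref{def: bracket flow}) issuing from $\mu\in W$ stays in $W$: $\mu_t = \mu_{1,t}\oplus\mu_{2,t}$. As the orbit is distinguished, the distinguished-orbit form of Theorem \ref{thm: einstein nilradical isomorphic to limit under gradient flow} guarantees $\mu_\infty\in GL(n,\mathbb R)\cdot\mu$ is a critical point; being block diagonal, $\mu_\infty = \mu_{1,\infty}\oplus\mu_{2,\infty}$ with each $\mu_{i,\infty}$ critical. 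Because the flow moves each block only in the symmetric directions $\pi(symm(n_i))\mu_{i,t}$, the limit $\mu_{i,\infty}$ is a degeneration of $\mu_i$, i.e.\ $\mu_{i,\infty}\in\overline{GL(n_i)\cdot\mu_i}$. To identify it with $\mu_i$ I invoke the derivation count
\[
\dim Der(\mathfrak s_{\mu_1}\oplus\mathfrak s_{\mu_2}) = \dim Der(\mathfrak s_{\mu_1}) + \dim Der(\mathfrak s_{\mu_2}) + b_1 z_2 + b_2 z_1,
\]
where $b_i = \dim\mathfrak s_{\mu_i}/[\mathfrak s_{\mu_i},\mathfrak s_{\mu_i}]$ and $z_i = \dim Z(\mathfrak s_{\mu_i})$. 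Each of $\dim Der$, $b_i$, $z_i$ is upper semicontinuous under degeneration, while $\dim Der(\mu_\infty) = \dim Der(\mu)$ because the two are isomorphic; hence every inequality is an equality, in particular $\dim Der(\mu_{i,\infty}) = \dim Der(\mu_i)$. This forces $\mu_{i,\infty}\in GL(n_i)\cdot\mu_i$, so each $GL(n_i)\cdot\mu_i$ is a distinguished orbit and $\mu_{i,\infty}$ is a critical point isomorphic to $\mu_i$.

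The final and hardest step is to upgrade ``distinguished orbit'' back to ``admits a solsoliton'' for each factor, preserving the flat-or-non-flat type recorded above. When $S_{\mu_i}$ is completely solvable this is immediate from Theorem \ref{thm: solsolitons and distinguished orbits}. In general a distinguished metric need not be a solsoliton, so I return to Lauret's criterion. At the block diagonal critical point $\mu_\infty$ the nilradical is $\mathfrak n = \mathfrak n_1\oplus\mathfrak n_2$ and the induced metric restricts to a nilsoliton on each $\mathfrak n_i$ that splits orthogonally (the nilpotent case, where distinguished is equivalent to nilsoliton by Theorem \ref{thm: nilsoliton vs disting point}); moreover the soliton endomorphism $D = Ric - c\,Id$ is automatically symmetric. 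The task is then to verify conditions (a)--(d) of Theorem \ref{thm: lauret solsolitons criteria} on each $\mathfrak s_{\mu_i}$ separately, the crux being that the abelianness of $\mathfrak a = \mathfrak n^\perp$ and the normality $[ad\,A,(ad\,A)^t] = 0$ descend to each block. I expect this reconciliation to be the main obstacle: the given solsoliton metric need not respect the ideal decomposition, and the real content is that the moment map and bracket flow --- which manifestly preserve the block structure --- allow one to replace it by an equivalent split configuration, after which the derivation identity matches the two factors and Lauret's structural conditions can be read off one summand at a time.
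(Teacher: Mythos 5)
Your easy direction and the first half of the hard direction are sound, and in fact your block-diagonal moment map computation is a nice self-contained substitute for the paper's citation of an external result: the paper simply quotes \cite{Jablo:FinitenessTheorem-compatibleSubgroups} to get that the factor orbits $GL(n_i,\mathbb R)\cdot\mu_i$ are distinguished, whereas you rederive this by showing $m(\nu_1\oplus\nu_2)$ is block diagonal, that the negative gradient flow preserves the block structure, and that the critical point equation splits blockwise; your derivation-count argument identifying $\mu_{i,\infty}$ with $\mu_i$ is also correct (the frontier of a real algebraic orbit consists of orbits of strictly smaller dimension). However, the proof has a genuine gap exactly where you say you ``expect the main obstacle'': you never actually get from ``each $GL(n_i,\mathbb R)\cdot\mu_i$ is distinguished'' to ``each $S_{\mu_i}$ admits a solsoliton,'' and this implication is \emph{false} in general. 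The paper's remark following Theorem \ref{thm: solsolitons and distinguished orbits} gives explicit counterexamples: if $\mathfrak n$ is a non-abelian Einstein nilradical and $\mathfrak a\subset Der(\mathfrak n)$ is abelian and skew-symmetric, then $\mathfrak a\ltimes\mathfrak n$ has a distinguished orbit but admits no solsoliton. So the missing step is not a routine verification; it requires importing the eigenvalue information carried by the hypothesis that the \emph{total} algebra admits a non-flat solsoliton. The paper does this as follows: by Theorem \ref{thm: lauret solsolitons criteria} (parts i.c and i.d), at the solsoliton no element of $\mathfrak a=\mathfrak n^\perp$ has $ad\,A$ with all purely imaginary eigenvalues; writing the distinguished points as $\mathfrak s_{\mu_i}=\mathfrak a_i\oplus\mathfrak n_i$ with $\mathfrak n_i$ a nilsoliton and $\mathfrak a_i$ abelian reductive (solvability forces abelianness), this eigenvalue condition passes to each $\mathfrak a_i$, and then Theorem \ref{thm: classification of solv admitting negative Einstein} --- not a direct verification of Lauret's conditions (a)--(d) --- produces the solsoliton on each factor. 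Without this step your argument proves only that the factors have distinguished orbits, which is strictly weaker than the theorem's conclusion.

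A second, smaller error: your mechanism for the flat/non-flat dichotomy is wrong. You claim the critical value $r=\nms$ vanishes exactly when $\nu$ is flat, but $r$ never vanishes: pairing $m(\nu)$ with the identity gives $\ip{m(\nu),I}=\|\nu\|^{-2}\ip{\pi(I)\nu,\nu}=-1$, so $tr\,m(\nu)=-1$ and $\nms>0$ for every $\nu\neq 0$. Concretely, a non-abelian flat solvable group (e.g.\ the universal cover of $E(2)$) is unimodular and Ricci flat, so by Remark \ref{rem: geometric interp of moment map} its critical point has $m=2B\neq 0$. The flat versus non-flat distinction must instead be tracked structurally, as the paper does: flat corresponds to abelian nilradical with $\mathfrak a$ acting with purely imaginary eigenvalues (Proposition \ref{prop: classification of solv admitting flat}), non-flat to a nilsoliton nilradical with $\mathfrak a$ containing no purely-imaginary-only elements, and the two cases of the theorem are handled by parallel but separate arguments.
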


\begin{cor}\label{cor: solv admit solsoliton reduces to irreducible components} Let $S_\mu$ be a solvable Lie group whose Lie algebra $\mathfrak s_\mu = \mathfrak s_{\mu_1} + \mathfrak s_{\mu_2}$ is a direct sum of ideals.  Then $S_\mu$ admits an Einstein metric if and only if both $S_{\mu_1}$ and $S_{\mu_2}$ admit Einstein metrics of the same sign.
\end{cor}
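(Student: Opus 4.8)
The plan is to read the corollary through Theorem~\ref{thm: solv admit solsoliton reduces to irreducible components} and the structural/uniqueness package of Theorem~\ref{thm: lauret solsolitons criteria}, using the single observation that an Einstein metric is precisely a solsoliton with $D=0$. The geometric inputs I will need are elementary: for a Riemannian direct product the $(1,1)$-Ricci tensor is block diagonal, $Ric_{g_1\oplus g_2}=Ric_{g_1}\oplus Ric_{g_2}$, and under a homothety $g\mapsto tg$ one has $Ric\mapsto t^{-1}Ric$, so that the \emph{sign} of the Einstein constant $c$ and the vanishing of $D$ are preserved by scaling. I would first dispatch the reverse direction. If each $S_{\mu_i}$ carries an Einstein metric of zero sign, both are flat, and the flat case of Theorem~\ref{thm: solv admit solsoliton reduces to irreducible components} (equivalently, a product of flat metrics is flat) furnishes a flat, hence Einstein, metric on $S_\mu$. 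If both signs are negative, write $Ric_{g_i}=c_iId_i$ with $c_i<0$, fix any $c<0$, and rescale $g_i$ by $s_i=c_i/c>0$; the product metric then has $Ric=cId_1\oplus cId_2=cId$ and is Einstein. It is exactly the equality of signs that keeps the scalars $s_i$ positive, and a flat factor paired with a negative factor would produce $Ric=0\oplus c_2Id_2$, never a multiple of the identity.

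For the forward direction, suppose $S_\mu$ admits an Einstein metric with $Ric=cId$; by Theorem~\ref{thm: lauret solsolitons criteria}(ii) one has $c\le 0$, with $c=0$ forcing flatness. In the flat case the flat part of Theorem~\ref{thm: solv admit solsoliton reduces to irreducible components} again gives flat (zero-sign) metrics on both factors. So I would concentrate on $c<0$. Here the Einstein metric is in particular a non-flat solsoliton, so Theorem~\ref{thm: solv admit solsoliton reduces to irreducible components} supplies non-flat solsolitons $g_i$ on each $S_{\mu_i}$, say $Ric_{g_i}=c_iId_i+D_i$ with $c_i<0$ and $D_i\in Der(\mathfrak s_{\mu_i})$. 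Rescaling as above to arrange $c_1=c_2=:c'$, the product metric $h=g_1\oplus g_2$ satisfies $Ric_h=c'Id+(D_1\oplus D_2)$, and $D_1\oplus D_2$ is a block-diagonal derivation of $\mathfrak s_\mu=\mathfrak s_{\mu_1}\oplus\mathfrak s_{\mu_2}$, so $h$ is itself a solsoliton on $S_\mu$.

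Now I invoke uniqueness. By Theorem~\ref{thm: lauret solsolitons criteria}(iii) the solsoliton on $S_\mu$ is unique up to isometry and scaling, so $h$ is isometric, after a homothety, to the given Einstein metric; since the Einstein condition is invariant under isometries and scalings, $h$ is itself Einstein, $Ric_h=cId$ for some constant. Comparing this with $Ric_h=(c'Id_1+D_1)\oplus(c'Id_2+D_2)$ gives $(c-c')Id=D_1\oplus D_2\in Der(\mathfrak s_\mu)$; because $c'<0$ the algebra $\mathfrak s_\mu$ is non-flat and hence non-abelian, so $Id\notin Der(\mathfrak s_\mu)$, forcing $c=c'$ and $D_1\oplus D_2=0$. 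Thus $D_1=D_2=0$, each $g_i$ is Einstein with the same negative constant $c'$, and the signs agree.

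The delicate point, and the one I expect to be the main obstacle, is precisely this upgrade in the forward non-flat case: Theorem~\ref{thm: solv admit solsoliton reduces to irreducible components} a priori returns only solsolitons on the factors, possibly with $D_i\neq 0$, and these must be shown to be genuinely Einstein. The mechanism is that the product of the factor solsolitons is again a solsoliton, which by uniqueness must coincide (up to isometry and scaling) with the Einstein metric and so inherits the Einstein property, which the block structure of the Ricci tensor then pushes down to each factor. The care needed is twofold and routine: checking that $D_1\oplus D_2$ is a derivation of the whole algebra (immediate from the ideal decomposition) and that the decomposition $Ric=cId+D$ is unique, which holds since $Id$ fails to be a derivation of any non-abelian factor, and no abelian (hence flat) factor can occur once $c<0$ by the sign dichotomy of Theorem~\ref{thm: lauret solsolitons criteria}(ii).
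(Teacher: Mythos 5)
Your proof is correct, but it reaches the conclusion by a different mechanism than the paper. The paper's own argument is a one-line appeal to its GIT machinery: the forward direction of Theorem~\ref{thm: solv admit solsoliton reduces to irreducible components} produces distinguished points for the factors, and the corollary then follows from the fact that isomorphic distinguished points lie in the same $O(n)$-orbit (Theorem~\ref{thm: limit under grad flow}), hence give isometric metric Lie algebras; this identifies the Einstein metric with the product of the factor metrics at the level of brackets, and the block structure of the Ricci tensor does the rest. You instead stay entirely on the Riemannian side: you take the solsolitons on the factors supplied by Theorem~\ref{thm: solv admit solsoliton reduces to irreducible components}, rescale and form the product solsoliton $h$, invoke Lauret's uniqueness (Theorem~\ref{thm: lauret solsolitons criteria}, part iii) to conclude that $h$ is isometric, up to homothety, to the given Einstein metric and hence is itself Einstein, and then kill the derivations with the elementary observation that $Id$ is not a derivation of a non-abelian Lie algebra, so $(c-c')Id = D_1\oplus D_2$ forces $c=c'$ and $D_1=D_2=0$. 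What your route buys is independence from the moment-map/stratification apparatus at this final step (the uniqueness you use is a quoted Riemannian theorem of Lauret), an explicit treatment of the reverse direction and of the flat/mixed-sign cases, and a transparent reason why the signs must agree; what the paper's route buys is brevity and freedom from normalization issues, since $O(n)$-conjugacy of brackets is an isometry on the nose. One point you should make explicit: Theorem~\ref{thm: lauret solsolitons criteria}(iii) gives uniqueness only \emph{after prescribing the scalar curvature}, so to compare $h$ with the Einstein metric by a homothety you need both to have scalar curvature of the same (negative) sign. This is true --- both are non-flat solsolitons, and a non-flat solsoliton has $c<0$ and negative scalar curvature by Lauret's structure theory (the nilsoliton nilradical and the $\mathfrak a$-directions contribute non-positively, and total vanishing forces flatness) --- but it deserves a sentence, since otherwise "unique up to isometry and scaling" is a slightly stronger reading than what part (iii) literally states.
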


\begin{proof}[Proof of theorem.] We prove the case that the solsoliton is not flat.  The flat case is similar and we leave it to the reader.

\bigskip

One direction is trivial. Recall, a non-flat solsoliton with $Ric=cId+D$ satisfies $c<0$ by Theorem \ref{thm: lauret solsolitons criteria}.  If $S_{\mu_i}$ admit solsolitons satisfying $Ric_{\mu_i} = c_i Id + D_i$, then one just needs to rescale so that $c_1=c_2$.  Endow $\mathfrak s_\mu$ with the product metric, i.e. the $\mathfrak s_{\mu_i}$ are orthogonal and the restriction to $\mathfrak s_{\mu_i}$ is the aforementioned metric. Then $\mathfrak s_\mu$, with $\mu = \mu_1+\mu_2$, is a solsoliton satisfying $Ric_\mu =Ric_{\mu_1} \oplus Ric_{\mu_2} = c_1 Id + (D_1 \oplus D_2)$.

We now show the converse. Recall that $S_\mu$ admitting a solsoliton implies the orbit $GL_n\mathbb R \cdot \mu$ is distinguished by Proposition \ref{prop: admit solsoliton implies distinguished orbit}.  However, this implies the orbits $GL_{n_i}\mathbb R \cdot \mu_i \subset \wedge^2(\mathbb R^{n_i})\otimes \mathbb R^{n_i}$ are distinguished, where $n_i = \dim \mathfrak s_{\mu_i}$.  (This has been proven in \cite[Theorem 4.5]{Jablo:FinitenessTheorem-compatibleSubgroups} for nilpotent groups.  However, the proof there only uses the fact that the orbits are distinguished and works in this setting with no modifications.)

Assume now that ${\mu_i}$ are the distinguished points and write $\mathfrak s_{\mu_i} = \mathfrak a_i \oplus \mathfrak n_i$ where $\mathfrak n_i$ is the nilradical and $\mathfrak a_i$ is a reductive subalgebra (cf. \cite[Theorem 4.7]{Lauret:MomentMapVarietyLieAlgebras}).  As $\mathfrak s_{\mu_i}$ are distinguished, the nilradicals $\mathfrak n_i$ admit nilsolitons by \cite[Theorem 4.7]{Lauret:MomentMapVarietyLieAlgebras}.

Write $\mathfrak s_\mu = \mathfrak s_{\mu_1}+\mathfrak s_{\mu_2} = (\mathfrak a_1+\mathfrak a_2) + (\mathfrak n_1 + \mathfrak n_2)$.  As $\mathfrak s_\mu$ is solvable, we see that the reductive subalgebra $\mathfrak a=\mathfrak a_1+\mathfrak a_2$ is abelian and hence each $\mathfrak a_i$ is abelian.  Furthermore, for any $A\in \mathfrak a$, we see that $ad\ A : \mathfrak n \to \mathfrak n$ has no purely imaginary eigenvalues by the observations in the proof of Proposition \ref{prop: admit solsoliton implies distinguished orbit}.  Thus, the solvable groups $S_{\mu_i}$ admit solsoliton metrics by either the observations in the proof of Proposition \ref{prop: admit solsoliton implies distinguished orbit} or Theorem \ref{thm: classification of solv admitting negative Einstein}.
\end{proof}

\begin{remark*} We point out for the concerned reader that the proof of Theorem \ref{thm: classification of solv admitting negative Einstein} does not depend on the previous theorem.
\end{remark*}

\begin{proof}[Proof of corollary.]
The proof of the corollary follows immediately from the proof of the theorem and the fact that isomorphic distinguished points must be isometric.  More precisely, isomorphic distinguished points lie in the same $O(n)$-orbit, see Theorem \ref{thm: limit under grad flow}.
\end{proof}

\section{The bracket flow}\label{sec: bracket flow}
In this section we analyze the negative gradient flow of $F=||m||^2$ as the critical points of this function have geometric significance, see Theorems \ref{thm: nilsoliton vs disting point} and \ref{thm: solsolitons and distinguished orbits}.

Let $G$ be an $\theta$-stable subgroup of $GL(n,\mathbb R)$, i.e. $G$ is stable under the transpose operation.  Let $K_G$ denote the set of fixed points of $\theta(g)=(g^t)^{-1}$ (cf. Section \ref{sec: variety of Lie brackets}).  Denote the moment map of this group action by $m_G$ and consider the function $F=||m_G||^2$ with critical set $\mathfrak C^G$.  Denote the negative gradient flow of $F$ by $\varphi_t$; in the notation of Definition \ref{def: bracket flow} $\varphi_t(\mu_0)=\mu_t$.  In the following way we consider limits of this flow.

\begin{defin}\label{def: omega limit set}  The $\omega$-limit set of $\varphi _t (p) \subseteq V$ is the set $\{ q\in V \ | \ \varphi_{t_n}(p) \to q \mbox{ for some sequence } t_n \to \infty \mbox{ in } \mathbb R \}$.  We denote this set by $\omega (p)$.
\end{defin}

\begin{prop}\cite{Sjamaar:ConvexityofMomentMapping}\label{prop: unique limit point} The omega limit set $\omega (p)$ is a single point.
\end{prop}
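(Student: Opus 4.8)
The plan is to recognize $F = \|m_G\|^2$ as a real-analytic function whose negative gradient flow stays in a compact set, and then to invoke the {\L}ojasiewicz gradient inequality to force convergence to a single point; this is the standard modern route to the cited result of Sjamaar. First I would observe that $F$ is invariant under rescaling in $V$, as noted just after its definition. Differentiating $F(cv) = F(v)$ in $c$ at $c=1$ gives $\langle \nabla F(v), v\rangle = 0$, so $\nabla F$ is everywhere tangent to the spheres $\{\|v\| = \mathrm{const}\}$. Consequently the negative gradient flow $\varphi_t$ preserves $\|v\|$, and the trajectory $\varphi_t(p)$ lies on the sphere of radius $\|p\|$, which is compact. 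This alone guarantees that $\omega(p)$ is nonempty, compact, and (by the standard connectedness argument for gradient flows on compact sets) connected, that $F$ is constant on $\omega(p)$ with value $F_\infty := \lim_{t\to\infty} F(\varphi_t(p))$, and that every point of $\omega(p)$ is a critical point, i.e. lies in $\mathfrak C^G$.

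Next I would establish that $F$ is real-analytic on this sphere. The moment map is determined by $\langle m_G(v), \alpha\rangle = \|v\|^{-2}\langle \pi(\alpha)v, v\rangle$, so on the fixed sphere $\|v\| = \|p\|$ the denominator is constant and $m_G$ is the image under a fixed linear projection onto $\mathfrak p$ of a vector whose entries are quadratic polynomials in $v$. Hence $m_G$, and therefore $F = \|m_G\|^2$, is polynomial, in particular real-analytic, on the sphere, which is itself a real-analytic manifold.

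With analyticity in hand, I would apply the {\L}ojasiewicz gradient inequality at a chosen point $q \in \omega(p)$: there exist a neighborhood $U$ of $q$ in the sphere and constants $C > 0$, $\sigma \in (0, \tfrac12]$ with $|F(x) - F_\infty|^{1-\sigma} \le C\,\|\nabla F(x)\|$ for all $x \in U$. Writing $u(t) = \varphi_t(p)$, one has $\tfrac{d}{dt}F(u) = -\|\nabla F(u)\|^2$, and while $u(t) \in U$ the inequality yields
$$-\frac{d}{dt}\big(F(u) - F_\infty\big)^{\sigma} = \sigma\,\big(F(u)-F_\infty\big)^{\sigma - 1}\,\|\nabla F(u)\|^2 \ge \frac{\sigma}{C}\,\|\nabla F(u)\|.$$
Integrating bounds the arc length $\int \|\dot u\|\,dt = \int \|\nabla F(u)\|\,dt$ by $\tfrac{C}{\sigma}\big(F(u) - F_\infty\big)^{\sigma}$, which is finite. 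A finite-length trajectory is Cauchy, so $u(t)$ converges to a single limit, forcing $\omega(p)$ to be a single point.

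The main obstacle is the trapping step, since the {\L}ojasiewicz estimate is only local and I must ensure that the trajectory, once close to $q$, never leaves $U$. This is handled by the usual argument: because $q \in \omega(p)$, the trajectory enters every neighborhood of $q$ infinitely often, and once it enters a sufficiently small ball (with $F(u) - F_\infty$ correspondingly small) the finite arc-length bound above shows the remaining trajectory is too short to exit $U$, so it stays trapped and the length integral converges to $q$. The only other point requiring care is the verification of analyticity, which is routine given the explicit formula for $m_G$.
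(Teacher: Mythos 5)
Your proposal is correct and follows essentially the same route as the paper's source: the paper defers the proof to Sjamaar, whose argument is precisely the {\L}ojasiewicz gradient inequality applied to the real-analytic, scale-invariant function $F=\|m_G\|^2$ along a trajectory confined to a compact sphere, yielding finite arc length and hence a unique limit. The paper's accompanying remark (``due to the fact that $F$ is real analytic\ldots'') confirms this is the intended mechanism, so there is nothing to add.
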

The uniqueness of limits is a strong result and is due to the fact that  $F=||m||^2$ is real analytic, $K$-invariant, and that $\mathfrak C^G\cap \{ sphere \ of \ radius \ ||p|| \} \cap G\cdot p \subset K \cdot p$ (see theorem below).
As the limit is well-defined, we will denote it by $\omega(p) = \varphi_{\infty}(p)$.  We point out that many of the following  results  can be proven without knowing that there is a unique point in the limit set.

\begin{thm}\cite{Jablo:FinitenessTheorem-compatibleSubgroups}\label{thm: limit under grad flow} Consider $p\in \mathfrak C^G$.  Then
    \begin{quote}
    \begin{enumerate}
    \item $F(p)$ is a minimum of $F$ restricted to $G\cdot p$,
    \item $\mathfrak C^G\cap \{ sphere \ of \ radius \ ||p|| \} \cap G\cdot p \subset K \cdot p$, and
    \item $\omega (G\cdot p) \subset K_G \cdot p$, i.e. $\varphi_\infty (gp) \in K_G \cdot p$ for all $g\in G$.
    \end{enumerate}
    \end{quote}
\end{thm}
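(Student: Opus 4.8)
The plan is to treat the three assertions as the real-reductive incarnation of Ness's theorem for the norm-squared of the moment map, powered by two elementary computations. First I would record the gradient of $F$. Differentiating the defining relation $\langle m(v),\alpha\rangle\,||v||^2 = \langle \pi(\alpha)v,v\rangle$ in a direction $w$ and then setting $\alpha=m(v)$ gives
$$\mathrm{grad}\,F(v)=\frac{4}{||v||^2}\bigl(\pi(m(v))v-F(v)\,v\bigr),$$
so $v$ is a critical point precisely when $\pi(m(v))v=F(v)v$, i.e. $v$ is an eigenvector of the self-adjoint operator $\pi(m(v))$ with eigenvalue $F(v)=||m(v)||^2$ (the condition recorded after Definition \ref{def: disting orbits}). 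Note also that $\langle\mathrm{grad}\,F(v),v\rangle=0$, so $\varphi_t$ preserves the sphere through its initial point. Second, for $\alpha\in\mathfrak p$ the function $h(t)=\log||\pi(\exp t\alpha)v||^2$ is convex with $h'(t)=2\langle m(\pi(\exp t\alpha)v),\alpha\rangle$; convexity is Cauchy--Schwarz applied to the self-adjoint $\pi(\alpha)$. Finally, since $m$ is $K_G$-equivariant and the inner product on $\mathfrak g$ is $\mathrm{Ad}(K_G)$-invariant, $F$ is $K_G$-invariant, so the Cartan decomposition $G=K_GP_G$ reduces every orbit statement to the directions $\exp(\mathfrak p_G)$.

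For (i) and (ii) I would run the minimization argument. Write $\beta=m(p)$ and $c=F(p)=||\beta||^2$; then $\pi(\beta)p=cp$. Given $g\in G$, use $K_G$-invariance to assume $g=\exp(\alpha)$, $\alpha\in\mathfrak p_G$. Cauchy--Schwarz against $\beta$ gives
$$F(gp)=||m(gp)||^2\ \ge\ \frac{\langle m(gp),\beta\rangle^2}{||\beta||^2}=\frac{\langle m(gp),\beta\rangle^2}{c},$$
so it suffices to show $\langle m(gp),\beta\rangle\ge c$. A short computation using $\pi(\beta)p=cp$ shows that $g\mapsto\langle m(gp),\beta\rangle$ has a critical point at $g=e$ with value $c$; the substance of Ness's theorem is that this critical point is a global minimum, extracted from the convexity of the Kempf--Ness functional $h$ together with the eigenspace decomposition of $\pi(\beta)$. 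This yields $F(gp)\ge c=F(p)$, which is (i). For (ii) I would analyze the equality case: if $gp$ lies on the sphere of radius $||p||$ with $F(gp)=c$, equality must hold in Cauchy--Schwarz (forcing $m(gp)$ parallel to $\beta$ and $gp$ an eigenvector of $\pi(\beta)$ for eigenvalue $c$) and in the convexity estimate (forcing the $\exp(\mathfrak p_G)$-part to centralize $\beta$ and fix $p$); only the compact factor survives, giving $gp\in K_G\cdot p$.

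With (i) and (ii) established, (iii) follows from the dynamics of $\varphi_t$. Fix $g\in G$ and, after normalizing to the sphere of radius $||p||$, start the flow at $gp$. Since $\mathrm{grad}\,F\perp v$ the trajectory stays on this sphere, and since its velocity is a combination of the orbit-tangent field $\pi(m(v))v$ and the radial field $v$ -- both tangent to $G\cdot p$, the latter because the dilations of $V$ are realized inside $G=GL_n\mathbb R$ -- the trajectory remains in $G\cdot p$. As $F$ is non-increasing along $\varphi_t$, bounded below, real analytic, and $K_G$-invariant, Proposition \ref{prop: unique limit point} supplies a single limit point $q=\varphi_\infty(gp)$, necessarily a critical point of $F$. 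By (i) the minimum of $F$ on $G\cdot p$ is $c$, so $F(q)=c$; being a critical point of this minimal value lying in $\overline{G\cdot p}$ on the sphere of radius $||p||$, the minimality returns it to $G\cdot p$ itself, whereupon (ii) identifies $q\in K_G\cdot p$. Hence $\omega(G\cdot p)\subset K_G\cdot p$.

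I expect the main obstacle to be the global minimization inside (i): passing from ``$g=e$ is a critical point of $g\mapsto\langle m(gp),\beta\rangle$'' to ``it is the global minimum.'' The first- and even second-order data are immediate from $\pi(\beta)p=cp$, but the function need not be convex along individual one-parameter subgroups, so the argument must invoke the full convexity package of the moment map (the real convexity theorem, equivalently the Kirwan--Ness stratification) rather than a single use of $h$. A secondary technical point is the bookkeeping in (iii): because $\varphi_t$ preserves the norm while $K_G\cdot p$ sits on the fixed sphere of radius $||p||$, one must normalize the initial point to that sphere -- legitimate since $F$ and the criticality condition are scale invariant and $GL_n\mathbb R$ realizes dilations on $V$ -- and then rule out the flow escaping to a boundary orbit, which is exactly where the minimality from (i) and the rigidity from (ii) are indispensable.
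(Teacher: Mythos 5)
You should first know that the paper never proves Theorem \ref{thm: limit under grad flow}: it is imported verbatim from \cite{Jablo:FinitenessTheorem-compatibleSubgroups}, with parts (i) and (ii) credited to \cite{Kempf-Ness} and \cite{Marian}; so your proposal can only be compared with those arguments. Against them, your plan for (i)--(ii) has a genuine gap, sitting exactly where you flagged your ``main obstacle,'' and it cannot be repaired by ``the full convexity package,'' because the inequality you reduce to is simply false. After the (legitimate) Cauchy--Schwarz step, everything hinges on $\langle m(gp),\beta\rangle\ge c$ for all $g$, which you justify by ``$e$ is a critical point of $g\mapsto\langle m(gp),\beta\rangle$, and this critical point is a global minimum.'' Your reasoning uses nothing specific to the bracket representation, so test it on $G=SL_2(\mathbb R)$ acting on $\mathbb R^2$: $p=e_1$ is a critical point with $\beta=m(p)=\mathrm{diag}(1/2,-1/2)$ and $c=1/2$, yet for the positive-definite $g=\left(\begin{smallmatrix}1&1\\1&2\end{smallmatrix}\right)\in\exp(\mathfrak p)$ one gets $gp=(1,1)^t$ and $\langle m(gp),\beta\rangle=0<c$; in fact $\langle m(\cdot),\beta\rangle\le c$ on all of $\exp(\mathfrak p)\cdot p$ here, so $e$ is a \emph{maximum} along those directions, even though (i) holds ($F\equiv 1/2$ on the orbit). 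The same example breaks your equality analysis in (ii): $w=(1,1)/\sqrt2$ lies in $G\cdot p$ on the unit sphere with $F(w)=c$, but $m(w)$ is \emph{not} parallel to $\beta$ --- it is only $O(2)$-conjugate to $\beta$, which is all (ii) can ever force. The correct route (Kirwan/Ness over $\mathbb C$; \cite{Marian} and \cite{JabloDistinguishedOrbits} over $\mathbb R$) replaces your Cartan decomposition $G=K\exp(\mathfrak p)$ by the \emph{parabolic} decomposition $G=KP_\beta$, with $P_\beta=G_\beta U_\beta$ as in Section \ref{sec: stratifying V}: writing $V=\bigoplus_r V_r$ for the eigenspaces of $\pi(\beta)$, one has $p\in V_c$ and $P_\beta\cdot p\subset\bigoplus_{r\ge c}V_r$, whence $\langle m(gp),\beta\rangle\ge c$ holds for $g\in P_\beta$ --- and only there --- while the $K$-factor is absorbed using the $K$-invariance of $F$ (a property $\langle m(\cdot),\beta\rangle$ does not have).

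Part (iii) has a second, independent gap. Even granting (i) and (ii), your step ``being a critical point of this minimal value lying in $\overline{G\cdot p}$ \dots\ the minimality returns it to $G\cdot p$ itself'' does not follow: (i) and (ii) are statements about $G\cdot p$, not about its closure, so they cannot exclude the flow converging to a critical point of the same value $c$ lying in a boundary orbit. Closing this hole requires an extra ingredient, which is exactly how the paper argues elsewhere: Theorem \ref{thm: orbit closure in statum} (orbits through critical points are closed in their stratum, the stratum being flow-invariant) is what powers the Corollary answering Question \ref{question: gradient flow of einstein nilradical}; alternatively one invokes the Richardson--Slodowy/Kempf--Ness theorem for the shifted group with Lie algebra $\{X\in\mathfrak g_\beta \mid \langle X,\beta\rangle=0\}$, for which $p$ is a minimal vector. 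A smaller but real issue: your tangency argument keeping $\varphi_t(gp)$ inside $G\cdot p$ uses that dilations of $V$ lie in $G$, which holds for $G=GL_n\mathbb R$ but not for the general $\theta$-stable reductive $G$ of the theorem (e.g.\ the groups $G_\phi\subset SL_n\mathbb R$ of Section \ref{sec: alg for nilsoliton}); one must enlarge to $\mathbb R^{>0}\cdot G$ (harmless, as $F$ changes only by an additive constant) or work projectively.
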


The first two statements originally appeared in \cite{Kempf-Ness} for complex representations and in \cite{Marian} for real representations.   In this way, we see that orbits containing critical points of $F=||m_G||^2$ are stable in the sense that the critical set is a global attractor of the negative gradient flow along the entire orbit.

In the setting of $GL_n\mathbb R$ acting on $V=\wedge ^2(\mathbb R^n)^*\otimes \mathbb R^n$, if $\mu \in V$ is the Lie bracket of a solvable Lie group admitting a solsoliton, then the orbit $GL_n\mathbb R \cdot \mu$ contains a critical point of the function $F=||m||^2$ (see Proposition \ref{prop: admit solsoliton implies distinguished orbit}).  We use this below to recover Einstein and solsoliton metrics.

\subsection*{Finding Einstein Metrics}
Using the above observations, we now have a procedure for recovering an Einstein, or solsoliton, metric on a solvable Lie group when it exists.

\begin{prop}\label{prop: finding Einstein via 2 curves} Let $G$ be a solvable Lie group which admits a non-flat Einstein or solsoliton metric.  The solsoliton metric may be obtained by following two consecutive curves of metrics.

Let $\ip{\cdot,\cdot}_0$ be any initial metric. The first curve $\ip{\cdot,\cdot}_t$, $t\in [0,1]$, goes from the initial metric to a so-called `distinguished metric' via a negative gradient flow.  The second curve $\ip{\cdot,\cdot}_t$, $t\in [1,2]$, joins the distinguished metric to the solsoliton metric by simply modifying the metric on the orthogonal complement $\mathfrak a$ of the nilradical $\mathfrak n$.
\end{prop}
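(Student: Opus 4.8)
The plan is to build the two curves separately, working through the dictionary of Section~\ref{sec: variety of Lie brackets} that trades varying the inner product on a fixed Lie algebra for varying the Lie bracket on a fixed Euclidean space. First I would fix the initial metric $\ip{\cdot,\cdot}_0$ and present the metric Lie algebra $(\mathfrak g,\ip{\cdot,\cdot}_0)$ as a point $\mu_0\in\mathcal V$ carrying the standard inner product on $\mathbb R^n$. Since $G=S_{\mu_0}$ admits a non-flat Einstein or solsoliton metric (the Einstein case being the solsoliton with $D=0$), Proposition~\ref{prop: admit solsoliton implies distinguished orbit} tells us the orbit $GL_n\mathbb R\cdot\mu_0$ is distinguished, i.e.\ it contains a critical point $p$ of $F=\nms$. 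This is the structural input that makes the whole scheme work.

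For the first curve I would run the bracket flow of Definition~\ref{def: bracket flow}, the negative gradient flow $\mu_t=\varphi_t(\mu_0)$ of $F$. Because $\mathrm{grad}\,F$ is tangent to the $GL_n\mathbb R$-orbits (it is a multiple of $\pi(m(\mu))\mu$, consistent with the critical-point condition $\pi(m(v))v=rv$), the trajectory stays inside $GL_n\mathbb R\cdot\mu_0$; writing $\mu_t=g_t\cdot\mu_0$ and applying the dictionary, this is precisely the curve of metrics $\ip{\cdot,\cdot}_t:=(g_t^{-1})^*\ip{\cdot,\cdot}_0$ on the \emph{fixed} group $G$. Since $\mu_0\in G\cdot p$, part (iii) of Theorem~\ref{thm: limit under grad flow} gives $\omega(\mu_0)\subset K\cdot p\subset\mathfrak C^G$, and Proposition~\ref{prop: unique limit point} makes this omega-limit a single point $\mu_\infty$, which is therefore a critical point of $F$ lying in the original orbit. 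After reparametrizing $[0,\infty)$ to $[0,1)$ and attaching $\mu_\infty$ at $t=1$, the first curve is an honest path of left-invariant metrics on $G$ terminating at a distinguished metric.

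For the second curve I would compare the distinguished metric with the solsoliton metric directly. By the computation recalled in the proof of Proposition~\ref{prop: admit solsoliton implies distinguished orbit} (together with Theorem~\ref{thm: lauret solsolitons criteria}), both metrics induce the same nilsoliton on the nilradical $\mathfrak n$, both make $\mathfrak a=\mathfrak n^\perp$ orthogonal to $\mathfrak n$, and they differ only in their quadratic form on the \emph{fixed} subspace $\mathfrak a$: the distinguished form is $\ip{\ip{A,A}}=\frac12\cdot\frac{-1}{c}\,tr(ad\ A\,(ad\ A)^t)$ and the solsoliton form is $\ip{A,A}=\frac{-1}{c}\,tr\ S(ad\ A)^2$. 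The second curve, $t\in[1,2]$, is then just the straight-line interpolation between these two quadratic forms on $\mathfrak a$, holding the nilsoliton metric on $\mathfrak n$ and the splitting $\mathfrak a\perp\mathfrak n$ fixed throughout. Both endpoints are positive definite (by Theorem~\ref{thm: lauret solsolitons criteria}(i.d) and the observation following it, using $c<0$), and a convex combination of positive-definite forms is positive definite, so every member of the curve is a genuine metric and the curve lands on the solsoliton metric. In the completely solvable case the two forms differ only by the factor $\frac12$ on $\mathfrak a$, cf.\ Remark~\ref{rmk: comp solv unimod - dist metric vs solsoliton and isometry groups}.

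The hard part is entirely the first curve: one must know not only that the negative gradient flow from an arbitrary starting metric converges, but that its limit remains in the original isomorphism class, so that the flow is a deformation of metrics on the \emph{same} group $G$ rather than a degeneration to a different (limit) Lie group. This is supplied by the interplay of Proposition~\ref{prop: admit solsoliton implies distinguished orbit}, the tangency of $\mathrm{grad}\,F$ to the orbit, and Theorem~\ref{thm: limit under grad flow}(iii) with Proposition~\ref{prop: unique limit point}; once these are in hand, the second curve is elementary convexity.
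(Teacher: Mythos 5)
Your proposal is correct and follows essentially the same route as the paper's proof: the first curve is the negative gradient flow of $F=\nms$ in the bracket picture, converging within the isomorphism class $GL_n\mathbb R\cdot\mu_0$ precisely because the orbit is distinguished (Proposition \ref{prop: admit solsoliton implies distinguished orbit} together with Theorem \ref{thm: limit under grad flow}), and the second curve is the linear interpolation on $\mathfrak a\times\mathfrak a$ between the distinguished form $\frac{-1}{2c}\,tr\bigl(ad\ A\,(ad\ A)^t\bigr)$ and the solsoliton form $\frac{-1}{c}\,tr\ S(ad\ A)^2$, which is exactly the paper's formula $\left(\frac{-1}{c}\right)\left[\frac{2-t}{2}\,tr\,ad\ A\circ ad\ A^t+(t-1)\,tr\ S(ad\ A)^2\right]$. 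Your added justifications (tangency of $\mathrm{grad}\,F$ to the orbit, uniqueness of the limit point, and positive definiteness of the convex combination) are correct elaborations of steps the paper leaves implicit.
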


\begin{remark*} A similar result holds for solvable Lie groups admitting flat metrics.  Here one just uses the first curve described above, cf. Proposition \ref{prop: admit solsoliton implies distinguished orbit}.
\end{remark*}

\begin{proof}
We realize this theorem by evolving the bracket instead of the metric.  Identify the metric Lie algebra $\{\mathfrak g, \ip{\ \cdot \ } \}$ with $\mathfrak s_\mu$ for some $\mu \in V$.  The first curve comes from flowing $\mu$ along the negative gradient flow of $F=||m||^2$.  This converges within the isomorphism class $GL_n\mathbb R\cdot \mu$ as the orbit is distinguished (see Proposition \ref{prop: admit solsoliton implies distinguished orbit}).  This limit is a distinguished point.

The second curve is realized by changing the metric on $\mathfrak a\times \mathfrak a$ as in the proof of Proposition \ref{prop: admit solsoliton implies distinguished orbit}:
    $$\left(\frac{-1}{c}\right) \left[ \frac{2-t}{2} \ tr \ ad\ A \circ ad\ A^t + (t-1)\ tr\ S(ad\ A)^2   \right] $$
for $t\in [1,2]$.
\end{proof}

\begin{remark}  In the event that the solsoliton is a flat Einstein metric, the second curve simply rescales the initial metric, however, when the solsoliton is not a flat Einstein metric, then the second curve consists of genuinely distinct metrics (i.e., non-homothetic metrics).
\end{remark}

\subsection*{Solitons and isometry groups}

The following is an immediate consequence of Theorem \ref{thm: limit under grad flow}.

\begin{cor}\label{cor: Kp is fixed under gradient flow} Consider $G$ a real algebraic reductive $\theta$-stable subgroup of $GL(n,\mathbb R)$ acting on $V$.  Let $G\cdot p$ be a distinguished orbit and $\varphi_\infty (p)$ as above.  Then $K_p \subset K_{\varphi_\infty(p)}$, where $K=G^\theta$ and $K_q$ is the stabilizer subgroup at $q$.
\end{cor}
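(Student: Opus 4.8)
The plan is to prove the containment by establishing that the entire negative gradient flow, and hence the limit map $\varphi_\infty$, is equivariant with respect to the maximal compact group $K = G^\theta$; the inclusion $K_p \subset K_{\varphi_\infty(p)}$ then drops out immediately.

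First I would record two invariance facts about the data defining the flow. The function $F = ||m_G||^2$ is $K$-invariant: this follows by combining the $K$-equivariance of the moment map, $m_G(k\cdot v) = Ad(k)\, m_G(v)$ for $k\in K_G$, with the $Ad(K)$-invariance of the chosen inner product on $\mathfrak g$ (recorded in Section \ref{sec: variety of Lie brackets}). Second, since $K \subset O(n)$ and the inner product $\ip{\ ,\ }$ on $V$ is $O(n)$-invariant, $K$ acts on $V$ by isometries. Because the gradient of a $K$-invariant function with respect to a $K$-invariant Riemannian metric is a $K$-equivariant vector field, the negative gradient flow intertwines the $K$-action: $\varphi_t(k\cdot v) = k\cdot \varphi_t(v)$ for all $k\in K$, $v\in V$, and $t$.

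Next I would pass to the limit. Since the $\omega$-limit set of the flow is a single point by Proposition \ref{prop: unique limit point}, the limit map $\varphi_\infty$ is well-defined, and letting $t\to\infty$ in the intertwining relation gives $\varphi_\infty(k\cdot v) = k\cdot \varphi_\infty(v)$. Now fix $k\in K_p$, so that $k\cdot p = p$; applying the previous identity at $v=p$ yields $\varphi_\infty(p) = \varphi_\infty(k\cdot p) = k\cdot \varphi_\infty(p)$, which says precisely that $k\in K_{\varphi_\infty(p)}$. This proves $K_p \subset K_{\varphi_\infty(p)}$. The distinguished hypothesis on $G\cdot p$ is what makes the statement meaningful: by Theorem \ref{thm: limit under grad flow} (iii) the limit $\varphi_\infty(p)$ lies in the $K$-orbit of the critical point contained in $G\cdot p$, so it is itself a critical point; however, the stabilizer containment uses only equivariance and does not require this.

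The argument carries no serious obstacle, which is why the corollary is labeled an immediate consequence. The only point demanding any care is the passage of equivariance to the limit, and this is legitimate precisely because the limit is a genuine single point (Proposition \ref{prop: unique limit point}) rather than merely an $\omega$-limit set; once uniqueness of the limit is in hand, continuity of the $K$-action makes the equivariance of $\varphi_\infty$ automatic.
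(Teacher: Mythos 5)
Your proof is correct and takes essentially the same approach as the paper: the paper's own (very brief) argument likewise derives $K_p \subset K_{\varphi_t(p)}$ from the $K$-equivariance of $m_G$ together with uniqueness of integral curves of the negative gradient flow, and then passes to the limit. Your writeup simply makes explicit the intermediate facts (the $K$-invariance of $F$, $K$ acting isometrically on $V$, hence equivariance of the gradient field and of $\varphi_t$, and the role of Proposition \ref{prop: unique limit point} in making $\varphi_\infty$ well-defined) that the paper leaves implicit.
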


\begin{proof}This follows  from the $K$-equivariance of $m_G$ and the uniqueness of integral curves of the negative gradient flow of $||m_G||^2$.  In fact, one has $K_p \subset K_{\varphi_t(p)}$ and the result follows by taking the limit.
\end{proof}

\begin{thm}\cite{Jablo:ModuliOfEinsteinAndNoneinstein}\label{thm: G dist iff ZGH dist} Consider a $\theta$-stable subgroup $G$ of $GL(n,\mathbb R)$ acting on $V$ (as in Section \ref{sec: variety of Lie brackets}).  Suppose $H$ is a $\theta$-stable group of automorphisms of $\mu \in V$.  Consider the centralizer of $H$ in $G$
    $$Z_G(H) = \{ g\in G \ | \ gh=hg \quad  \mbox{ for all } h\in H, g\in G \}$$
Then $Z_G(H)$ is reductive, $\theta$-stable and $G\cdot \mu$ is a distinguished orbit  if and only if $Z_G(H)\cdot \mu$ is a distinguished orbit.  Moreover, along the orbit $Z_G(H)\cdot \mu$, $m_G = m_{Z_G(H)}$.
\end{thm}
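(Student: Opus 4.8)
The plan is to dispatch the two algebraic assertions about $Z_G(H)$ first, then prove the moment-map identity, and finally deduce the equivalence of distinguished orbits from it. For $\theta$-stability: if $g\in Z_G(H)$ and $h\in H$, then transposing $gh=hg$ and using that $H$ is $\theta$-stable (so $h^t\in H$ whenever $h\in H$) shows that $g^t$ centralizes $H$; hence $\theta(g)=(g^t)^{-1}\in G$ centralizes $H$, i.e. $\theta(g)\in Z_G(H)$. For reductivity I would invoke that a centralizer is Zariski closed and that a $\theta$-stable (self-adjoint) algebraic subgroup of $GL(n,\mathbb R)$ is reductive, as in \cite{Mostow:SelfAdjointGroups}.

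The heart of the proof is the identity $m_G=m_{Z_G(H)}$ along $Z_G(H)\cdot\mu$; once this is in hand the rest is formal. Write $Z=Z_G(H)$, and let $\mathfrak g=\mathfrak k_G\oplus\mathfrak p_G$, $\mathfrak h=\mathfrak k_H\oplus\mathfrak p_H$, $\mathfrak z=\mathfrak k_Z\oplus\mathfrak p_Z$ be the Cartan decompositions, so that $\mathfrak p_Z=\mathfrak p_G\cap\mathfrak z$. Fix $v\in Z\cdot\mu$, say $v=\pi(z)\mu$ with $z\in Z$. Since $z$ commutes with $H$ and $H\subseteq Aut(\mu)$ (so $\pi(\beta)\mu=0$ for $\beta\in\mathfrak h$), every $\beta\in\mathfrak h$ satisfies $\pi(\beta)v=\pi(z)\pi(\beta)\mu=0$, and every $k\in K_H=H\cap O(n)$ fixes $v$. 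I would establish that $m_G(v)$ commutes with $\mathfrak h$, whence $m_G(v)\in\mathfrak p_G\cap\mathfrak z=\mathfrak p_Z$. For the skew part I use $K_G$-equivariance: since $k\in K_H\subseteq K_G$ fixes $v$, $m_G(v)=m_G(\pi(k)v)=Ad(k)m_G(v)$, so $m_G(v)$ commutes with $\mathfrak k_H$. For the symmetric part, take $\beta\in\mathfrak p_H$ and any $\gamma\in\mathfrak g$; using that the defining relation $\ip{m_G(v),\alpha}\,||v||^2=\ip{\pi(\alpha)v,v}$ extends to all $\alpha\in\mathfrak g$ (both sides vanish on $\mathfrak k_G$), that $[\beta,\gamma]\in\mathfrak g$, and that the term $\ip{\pi(\gamma)\pi(\beta)v,v}$ drops out because $\pi(\beta)v=0$, one computes
$$\ip{m_G(v),[\beta,\gamma]}\,||v||^2=\ip{\pi(\beta)\pi(\gamma)v,v}=\ip{\pi(\gamma)v,\pi(\beta)v}=0,$$
where $\pi(\beta)^t=\pi(\beta^t)=\pi(\beta)$ was used. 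Since $\beta^t=\beta$, this reads $\ip{[\beta,m_G(v)],\gamma}=0$ for all $\gamma\in\mathfrak g$, and as $[\beta,m_G(v)]\in\mathfrak g$ it must vanish. Thus $m_G(v)\in\mathfrak p_Z$, and comparing the defining relations of $m_G$ and $m_Z$ on $\mathfrak p_Z\subseteq\mathfrak p_G$ gives $m_G(v)=m_Z(v)$ by uniqueness.

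Granting $m_G=m_Z$ on $Z\cdot\mu$, the equivalence of distinguished orbits follows. If $Z\cdot\mu$ is distinguished it contains a critical point $v$ of $||m_Z||^2$, characterized by $\pi(m_Z(v))v=rv$; since $m_Z(v)=m_G(v)$ the same $v$ satisfies $\pi(m_G(v))v=rv$, so it is a critical point of $F=||m_G||^2$ and $G\cdot\mu$ is distinguished. Conversely, I would run the negative gradient flow of $||m_G||^2$ starting at $\mu$: its velocity at any $v$ points along $\pi(m_G(v))v$, and for $v\in Z\cdot\mu$ this equals $\pi(m_Z(v))v$ with $m_Z(v)\in\mathfrak z$, hence is tangent to $Z\cdot\mu$. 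Therefore the $G$-flow from $\mu$ stays in $Z\cdot\mu$ and coincides with the $Z$-flow; by Proposition \ref{prop: unique limit point} both share the common limit $\mu_\infty\in\overline{Z\cdot\mu}$. Distinguishedness of $G\cdot\mu$ forces $\mu_\infty\in G\cdot\mu$ (the generalization to distinguished orbits of Theorem \ref{thm: einstein nilradical isomorphic to limit under gradient flow}), and reconciling $\mu_\infty\in G\cdot\mu\cap\overline{Z\cdot\mu}$ with the $Z$-orbit then produces a critical point inside $Z\cdot\mu$, so $Z\cdot\mu$ is distinguished.

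I expect two places to require the most care. The main obstacle is the commutator identity $m_G(v)\in\mathfrak z$: the argument above leans on $\mathfrak h\subseteq\mathfrak g$ so that the test directions $[\beta,\gamma]$ and the commutator $[\beta,m_G(v)]$ remain in $\mathfrak g$, which is exactly what lets ``orthogonal to $\mathfrak g$'' force vanishing; I would record this hypothesis explicitly, noting it holds in all applications (for instance when $G=GL(n,\mathbb R)$). The second delicate point is the final step of the converse, upgrading $\mu_\infty\in G\cdot\mu\cap\overline{Z\cdot\mu}$ to $\mu_\infty\in Z\cdot\mu$; here I would use local closedness of the algebraic orbits together with the structure of the set $Z\cdot G_\mu\subseteq G$, in the spirit of Theorem \ref{thm: limit under grad flow} and \cite{JabloDistinguishedOrbits}.
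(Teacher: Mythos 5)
First, a point of reference: this theorem is not proved in the paper at all; it is imported, with citation, from \cite{Jablo:ModuliOfEinsteinAndNoneinstein}, so there is no internal proof to compare yours against and I am judging your argument on its own merits. The first two thirds of your proposal are sound. The $\theta$-stability and reductivity of $Z_G(H)$ are correct as you argue (centralizers are algebraic, and self-adjoint algebraic subgroups are reductive by \cite{Mostow:SelfAdjointGroups}). Your proof of the identity $m_G=m_{Z_G(H)}$ along $Z_G(H)\cdot\mu$ is also essentially right, including the point you correctly flag: both the $K_H$-equivariance step and the commutator computation need $H\subseteq G$ (so that $K_H\subseteq K_G$ and $[\beta,\gamma],\,[\beta,m_G(v)]\in\mathfrak g$). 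The paper's statement does not literally assume $H\subseteq G$, but every application does (either $G=GL(n,\mathbb R)$ or $H=G\cap Aut(\mu)$ as in Proposition \ref{prop: iG(H)}), so this is a fair reading of the intended hypothesis. Given the identity, the direction ``$Z_G(H)\cdot\mu$ distinguished $\Rightarrow$ $G\cdot\mu$ distinguished'' follows exactly as you say from the criterion $\pi(m(v))v=rv$.

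The genuine gap is the final step of the converse, and it is not the technicality you suggest it is. Your flow argument does show (modulo the radial component of $\mathrm{grad}\,\|m\|^2$, which forces you to work with $\mathbb R^{*}\times Z_G(H)$ or on the sphere --- a minor repair) that the negative gradient flow from $\mu$ stays in the cone over $Z_G(H)\cdot\mu$, that the $G$- and $Z_G(H)$-flows coincide there, and, via Theorem \ref{thm: limit under grad flow} and distinguishedness of $G\cdot\mu$, that the limit $\mu_\infty$ is a critical point lying in $G\cdot\mu\cap\overline{(\mathbb R^{*}\times Z_G(H))\cdot\mu}$. But promoting this to $\mu_\infty\in Z_G(H)\cdot\mu$ cannot be done by ``local closedness of algebraic orbits'': for \emph{any} starting point the flow limit lies in the orbit closure, and it lies in the orbit precisely when that orbit is distinguished, so ``the limit lands in $Z_G(H)\cdot\mu$'' is equivalent to the very conclusion you are proving. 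The only extra leverage is $\mu_\infty\in G\cdot\mu$, and extracting the conclusion from it is the actual content of the theorem: writing $\mu_\infty=g\cdot\mu$, one knows $H$ and $g^{-1}Hg$ are $\theta$-stable subgroups of the (non-reductive!) stabilizer $Aut(\mu)$, and one must modify $g$ into an element of $\mathbb R^{*}Z_G(H)\,(G\cap Aut(\mu))$. That is a Luna-type centralizer statement; for closed orbits it follows from Kempf--Ness/Richardson--Slodowy theory \cite{RichSlow}, and for distinguished orbits it requires the shift/stratification machinery (the groups $H_\beta$ of Section \ref{sec: stratifying V}, Proposition \ref{prop: properties of stratification}, Theorem \ref{thm: orbit closure in statum}, and the structure of stabilizers at critical points as in Proposition \ref{prop: Aut at soliton}), i.e.\ the arguments of \cite{JabloDistinguishedOrbits,Jablo:ModuliOfEinsteinAndNoneinstein}. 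As written, your proof of ``$G\cdot\mu$ distinguished $\Rightarrow$ $Z_G(H)\cdot\mu$ distinguished'' is incomplete at exactly this point.
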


\begin{remark*}  In the above, there is no ambiguity as to the meaning of distinguished since $m_G = m_{Z_G(H)}$ along the subset  $Z_G(H)\cdot \mu$.

The group $H$ being a group of automorphisms means precisely that $H$ is a subgroup of the stabilizer of $GL(n,\mathbb R)$ at $\mu$, and $H$ being $\theta$-stable automatically makes $H$ reductive.
\end{remark*}

This theorem has been used to construct continuous families of Einstein nilradicals and non-Einstein nilradicals (cf. \cite{Jablo:ModuliOfEinsteinAndNoneinstein}).  We use this theorem to narrow our search for soliton metrics and to help prove the following.

\begin{thm}\label{thm: comp solv unimod solsoliton have max isom grp} Let $S$ be a completely solvable unimodular group admitting a solsoliton metric.  Let $g$ be any left-invariant metric.  Then there exists a left-invariant soliton metric $g'$ such that $Isom(S,g) \subset Isom(S,g')$, as groups.
\end{thm}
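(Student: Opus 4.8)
The plan is to reduce the statement to an inclusion of isotropy subgroups and then to produce the desired soliton by moving the Lie bracket inside a carefully chosen reductive subgroup. First I would invoke the theorem of Gordon--Wilson (Theorem \ref{thm: isom of compl solv unimod}): writing $(\mathfrak s,g)\cong(\mathbb R^n,\mu,\ip{\cdot,\cdot})$ for the standard inner product and identifying $S$ with $S_\mu$, every isometry is algebraic, so $Isom(S,g)=S_\mu\rtimes K_\mu$ with $K_\mu=Aut(\mu)\cap O(n)$. The same decomposition holds for any left-invariant metric $g'$ on $S$, with isotropy $Aut(\mu)\cap O(g')$ acting on $S_\mu$ by the \emph{same} automorphisms and with the same translation part $S_\mu$; hence it suffices to find a solsoliton metric $g'$ on $S$ with $K_\mu=Aut(\mu)\cap O(g)\subseteq Aut(\mu)\cap O(g')$. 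Because $S$ admits a solsoliton, Proposition \ref{prop: admit solsoliton implies distinguished orbit} gives that the orbit $GL_n\mathbb R\cdot\mu$ is distinguished.

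The key step is to realize the distinguished metric by an isomorphism that commutes with the isotropy $K_\mu$. The group $H=K_\mu\subseteq O(n)$ is compact, hence $\theta$-stable and reductive, and it is a group of automorphisms of $\mu$, so Theorem \ref{thm: G dist iff ZGH dist} applies: the centralizer $Z=Z_{GL_n\mathbb R}(K_\mu)$ is reductive and $\theta$-stable, the orbit $Z\cdot\mu$ is again distinguished, and $m_{GL_n\mathbb R}=m_Z$ along $Z\cdot\mu$. In particular $Z\cdot\mu$ contains a critical point $\mu_\infty=h\cdot\mu$ of $||m_Z||^2$ with $h\in Z$; since the two moment maps agree there, $\mu_\infty$ is also a critical point of $F=||m||^2$, so $(S_{\mu_\infty},\ip{\cdot,\cdot})$ is a distinguished metric. (Equivalently, one runs the bracket flow of $||m_Z||^2$ from $\mu$; by Theorem \ref{thm: limit under grad flow} and $K$-equivariance, cf. Corollary \ref{cor: Kp is fixed under gradient flow}, it converges inside $Z\cdot\mu$ to such a $\mu_\infty$.) The crucial gain is that $h\in Z_{GL_n\mathbb R}(K_\mu)$ commutes with every $k\in K_\mu$.

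Next I transport this distinguished metric back to $S_\mu$. Let $g_{dist}=h^*\ip{\cdot,\cdot}$ on $S_\mu$, so that $h\colon(S_\mu,g_{dist})\to(S_{\mu_\infty},\ip{\cdot,\cdot})$ is an isometry and an isomorphism. A direct computation gives $O(g_{dist})=h^{-1}O(n)h$, whence the isotropy of $g_{dist}$ is $Aut(\mu)\cap h^{-1}O(n)h$. For $k\in K_\mu$ we have $hkh^{-1}=k\in O(n)$ because $h$ commutes with $K_\mu$, so $k\in h^{-1}O(n)h$ and therefore $K_\mu\subseteq Aut(\mu)\cap O(g_{dist})$. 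By the reduction above this yields $Isom(S,g)\subseteq Isom(S,g_{dist})$ as subgroups of the diffeomorphisms of $S$. Finally, since $S_{\mu_\infty}\cong S$ is completely solvable and unimodular, Theorem \ref{thm: solsolitons and distinguished orbits} together with Remark \ref{rmk: comp solv unimod - dist metric vs solsoliton and isometry groups} shows that the distinguished metric $\ip{\cdot,\cdot}$ on $S_{\mu_\infty}$ and a solsoliton metric there (differing from it only on $\mathfrak a=\mathfrak n^\perp$) have the same full isometry group; transporting by the isometry $h$, the metric $g_{dist}$ and a solsoliton metric $g'$ on $S$ have the same isometry group, giving $Isom(S,g)\subseteq Isom(S,g_{dist})=Isom(S,g')$.

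The main obstacle is exactly the commutation arranged in the second paragraph. Without it, an arbitrary isomorphism $h$ carrying $\mu$ to a distinguished point would only give $K_\mu\subseteq Aut(\mu_\infty)\cap O(n)$ after conjugation, while $hkh^{-1}$ need not be orthogonal, so the isotropy of the given metric would fail to embed into that of the distinguished, hence solsoliton, metric. Theorem \ref{thm: G dist iff ZGH dist}, which lets one locate the distinguished point inside the centralizer $Z_{GL_n\mathbb R}(K_\mu)$ while preserving the moment map, is precisely what removes this difficulty. The flat case is handled by the same argument, using that a flat metric is itself a critical point of $F$ (cf. the proof of Proposition \ref{prop: admit solsoliton implies distinguished orbit}).
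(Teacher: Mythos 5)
Your proposal is correct and follows essentially the same route as the paper: Gordon--Wilson to reduce everything to the isotropy subgroups, Theorem \ref{thm: G dist iff ZGH dist} applied to $H=K_\mu$ to locate a distinguished point $h\cdot\mu$ with $h\in Z_{GL_n\mathbb R}(K_\mu)$, and the modification on $\mathfrak a = \mathfrak n^\perp$ from Remark \ref{rmk: comp solv unimod - dist metric vs solsoliton and isometry groups} to pass from the distinguished metric to the solsoliton without changing the isometry group. The only (cosmetic) difference is that you deduce the isotropy containment directly from the commutation $hkh^{-1}=k\in O(n)$, whereas the paper first invokes Corollary \ref{cor: Kp is fixed under gradient flow} to get $K_\mu\subset K_{h\cdot\mu}$ and then conjugates.
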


\begin{cor}\label{cor: nilsoliton have max isom grp} Let $N$ be an Einstein nilradical.  Let $g$ be any left-invariant metric.  Then there exists a left-invariant soliton metric $g'$ such that $Isom(N,g) \subset Isom(N,g')$, as groups.
\end{cor}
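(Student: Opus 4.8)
The plan is to deduce the corollary directly from Theorem \ref{thm: comp solv unimod solsoliton have max isom grp} by checking that an Einstein nilradical satisfies every hypothesis of that theorem. First I would recall from Section \ref{sec: Riem Lie groups} that every nilpotent Lie group $N$ is completely solvable and unimodular: the operators $ad\ X$ are nilpotent, so they have only the real eigenvalue $0$, and in particular $tr\ ad\ X = 0$ for all $X\in \mathfrak n$. Thus $N$ is a completely solvable unimodular Lie group, and the structural prerequisites of the theorem are met.

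Next I would observe that, by definition, an Einstein nilradical is precisely a nilpotent Lie group admitting a nilsoliton metric (equivalently, by Theorem \ref{thm: nilsoliton vs disting point}, one for which the orbit $GL_n\mathbb R\cdot\mu$ is distinguished). A nilsoliton satisfies $Ric = cId + D$ for some $D\in Der(\mathfrak n)$, so it is an algebraic soliton, and hence a solsoliton metric in the sense required by Theorem \ref{thm: comp solv unimod solsoliton have max isom grp}. This supplies the remaining hypothesis, the existence of a solsoliton.

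With complete solvability, unimodularity, and the existence of a solsoliton all in hand, I would invoke Theorem \ref{thm: comp solv unimod solsoliton have max isom grp} verbatim: for the arbitrary left-invariant metric $g$ it produces a left-invariant soliton metric $g'$ satisfying $Isom(N,g)\subset Isom(N,g')$, which is exactly the assertion. I would add the remark that, since $N$ is nilpotent, the soliton $g'$ is automatically a nilsoliton, giving the sharpest reading of the conclusion.

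The step requiring the most care is merely the bookkeeping that the metric delivered by the theorem is of the expected type; there is no genuine obstacle, as the corollary is a direct specialization of the theorem to the nilpotent case. All of the real content resides in Theorem \ref{thm: comp solv unimod solsoliton have max isom grp} itself, whose argument rests on the $K$-equivariance of the moment map together with the behavior of the negative gradient flow of $F=||m||^2$ (Corollary \ref{cor: Kp is fixed under gradient flow} and Theorem \ref{thm: limit under grad flow}).
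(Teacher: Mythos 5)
Your proposal is correct and coincides with the paper's intended argument: the corollary is stated immediately after Theorem \ref{thm: comp solv unimod solsoliton have max isom grp} precisely as its specialization to nilpotent groups, since a nilpotent group is completely solvable and unimodular and an Einstein nilradical by definition admits a nilsoliton, which is a solsoliton in the sense of that theorem's hypothesis. The paper supplies no separate proof because the deduction is exactly the bookkeeping you describe.
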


\begin{remark}  In this way, we see that these soliton metrics are the most symmetric (left-invariant) metric that such nilpotent and solvable groups  can admit.
\end{remark}

\begin{proof}Recall that a completely solvable unimodular Lie group $S_\mu$ admits a solsoliton metric if and only if $GL(n,\mathbb R)\cdot \mu$ is a distinguished orbit (Theorem \ref{thm: solsolitons and distinguished orbits}).  To show that such metrics have maximal isometry groups, we use an intermediate metric, a distinguished metric (i.e. critical point of $F=||m||^2$), in which the isometry group embeds and then show that this metric has the same isometry group as a particular choice of soliton metric (cf. Section \ref{sec: solitons vs distinguished orbits}).

Recall that the isometry group of a completely solvable unimodular group is its algebraic isometry group, i.e. $Isom(S_\mu) = S_\mu \rtimes (Aut(\mu) \cap  O (\ip{\ , \ })$.  Given $g\in GL(n,\mathbb R)$, $Aut(g^*\mu) = gAut(\mu)g^{-1}$ and the orthogonal group $O(\ (g^{-1})^* \ip{\ ,\ }) = g^{-1} O(\ip{\ ,\ }) g$, as $(g^{-1})^*\ip{\cdot ,\cdot }=\ip{g\cdot, g\cdot }$.

The following metric Lie algebras are isometric
    $$\{\mathbb R^n, g^*\mu ,\ip{\cdot,\cdot}\} \simeq  \{\mathbb R^n, \mu,(g^{-1})^*\ip{\cdot,\cdot}\}   $$
see Section \ref{sec: Riem Lie groups}, and the corresponding Riemmanian solvable Lie groups are isometric
    $$\{ S_{g^*\mu}, \ip{\cdot,\cdot}\} \simeq  \{ S_\mu,(g^{-1})^*\ip{\cdot,\cdot}\}   $$
At $e\in S_\mu$, the isometry group  of $\{ S_\mu,(g^{-1})^*\ip{\cdot,\cdot}\}$ has isotropy subgroup
    $$Aut(\mu) \cap  O(\ (g^{-1})^* \ip{\ ,\ }) = g^{-1} ( Aut(g^*\mu) \cap O(\ip{\ ,\ })\ )\ g     $$

\textbf{Step 1.} Let $S_\mu$ be a Riemannian solvable group which admits a solsoliton metric.  Let $H=Aut(\mu)\cap O(\ip{\ ,\ })$; this subgroup is $\theta$-stable.  By Theorem \ref{thm: solsolitons and distinguished orbits} the orbit $GL(n,\mathbb R)\cdot \mu$ is distinguished and by Theorem \ref{thm: G dist iff ZGH dist} the orbit $Z_G(H)\cdot \mu$ actually contains the limit $\mu_\infty$ of the negative gradient flow of $F=||m||^2$.  Let $g\in Z_g(H)$ be such that $g\cdot \mu = \mu_\infty$.

By Corollary \ref{cor: Kp is fixed under gradient flow}, we see that
    $$Aut(\mu) \cap O(\ip{\ ,\ }) = K_\mu \subset K_{g\cdot \mu} = Aut(g^*\mu) \cap O(\ip{\ ,\ })  $$
where $K=O(n,\mathbb R)$.  Using the fact that $g\in Z_G(H)$, we obtain
    $$Aut(\mu) \cap O(\ip{\ ,\ }) = g^{-1}(\ Aut(\mu) \cap O(\ip{\ ,\ }) \ ) \ g \subset g^{-1} ( Aut(g^*\mu) \cap O(\ip{\ ,\ })\ )\ g  = Aut(\mu) \cap  O(\ (g^{-1})^* \ip{\ ,\ }) $$
As the underlying Lie group structure of $\{S_\mu, \ip{\ ,\ } \}$ and $\{S_\mu, (g^{-1})^*\ip{\ ,\ } \}$ is the same, have have
    $$Isom(S_\mu, \ip{\ ,\ }) \subset Isom(S_\mu, (g^{-1})^*\ip{\ ,\ })$$
as Lie groups.

\textbf{Step 2.}  So far we have imbedded the isometry group of $S_\mu$ into the isometry group of a distinguished metric $S_{\mu '}$ (these are isomorphic as Lie groups).  Write $\mathfrak s_{\mu'} = \mathfrak a \ltimes \mathfrak n$.  We have already observed that the metric on $\mathfrak s_{\mu'}$ can be transformed into a solsoliton metric by simply rescaling the metric on $\mathfrak a$ and this does not change the isometry groups, see Remark \ref{rmk: comp solv unimod - dist metric vs solsoliton and isometry groups}.  This completes the proof.

\end{proof}

\subsection*{Characterization of solvable algebras admitting Einstein and solsoliton metrics}

\begin{thm}\label{thm: classification of solv admitting negative Einstein} Let $\mathfrak n$ be an Einstein nilradical and denote the algebra of derivations by $Der(\mathfrak n)$.  Let $\mathfrak a \subset Der(\mathfrak n)$ be an abelian reductive subgroup.  If no element of $\mathfrak a$ has only purely imaginary eigenvalues, then $\mathfrak s = \mathfrak a \ltimes \mathfrak n$ admits a solsoliton metric.  Moreover, every solvable algebra admitting a non-flat solsoliton metric arises this way.
\end{thm}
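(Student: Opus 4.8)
The plan is to verify the forward implication through Lauret's criteria (Theorem \ref{thm: lauret solsolitons criteria} i.) by constructing an explicit metric on $\mathfrak s = \mathfrak a \ltimes \mathfrak n$, and to deduce the converse directly from the same criteria together with the observation recorded immediately after that theorem. For the forward direction the essential point is to produce a nilsoliton inner product on $\mathfrak n$ for which $\mathfrak a$ is stable under the transpose; once this is in hand the solsoliton is assembled by hand and conditions (a)--(d) fall out almost mechanically.

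To arrange the self-adjointness I would first invoke Mostow's theorem (as in the proof of Proposition \ref{prop: classification of solv admitting flat}) to fix an inner product on $\mathfrak n$ --- taken to be the standard one on $\mathbb R^n = \mathfrak n$ --- for which $\mathfrak a = \mathfrak a^t$. Since $\mathfrak a \subset Der(\mathfrak n)$, the group $H = \overline{\exp \mathfrak a}$ (its algebraic hull) is a $\theta$-stable abelian group of automorphisms of the bracket $\mu$. Because $\mathfrak n$ is an Einstein nilradical, $GL(n,\mathbb R)\cdot \mu$ is a distinguished orbit (Theorem \ref{thm: nilsoliton vs disting point}), so by Theorem \ref{thm: G dist iff ZGH dist} the orbit $Z_G(H)\cdot \mu$ is also distinguished and contains a critical point $\mu_\infty = g\cdot\mu$ with $g \in Z_G(H)$. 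The corresponding nilsoliton metric on $\mathfrak n_\mu$ is $\ip{g\cdot, g\cdot}$, and since $g$ centralizes both $\mathfrak a$ and $\mathfrak a^t = \mathfrak a$, a one-line computation shows that the transpose of each $A \in \mathfrak a$ with respect to this metric is again $A^t \in \mathfrak a$. Thus $\mathfrak a$ remains self-adjoint for a nilsoliton metric $h$ on $\mathfrak n$. This is the step I expect to be the main obstacle: the role of the centralizer theorem is precisely to keep the negative gradient flow toward the nilsoliton from destroying the self-adjointness of $\mathfrak a$.

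With $h$ in hand I would define the metric on $\mathfrak s$ by declaring $\mathfrak a \perp \mathfrak n$, using $h$ on $\mathfrak n$, and setting $g(A,A') = \frac{-1}{c}\, tr\big(S(ad\ A)\, S(ad\ A')\big)$ on $\mathfrak a$, where $c<0$ is the nilsoliton constant ($Ric_{\mathfrak n} = cId + D$). The hypothesis that no element of $\mathfrak a$ has only purely imaginary eigenvalues guarantees $S(ad\ A) \neq 0$ for $A \neq 0$ (each $A$ is normal, hence semisimple, for $h$, so purely imaginary eigenvalues would force $ad\ A$ skew), and therefore this form is genuinely positive definite. Now I would check Lauret's four conditions: (a) holds by construction; (b) is a hypothesis; (c) is immediate since $(ad\ A)^t$ acts as $A^t$ on $\mathfrak n$ and annihilates $\mathfrak a$, i.e. $(ad\ A)^t = ad(A^t)$ with $A^t \in \mathfrak a \subset \mathfrak s$, an inner derivation; and (d) is the defining formula for the metric on $\mathfrak a$. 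By Theorem \ref{thm: lauret solsolitons criteria} i., $\mathfrak s$ is then a solsoliton.

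For the converse, suppose $\mathfrak s$ carries a non-flat solsoliton, and let $\mathfrak n$ be its nilradical with $\mathfrak a = \mathfrak n^\perp$. Theorem \ref{thm: lauret solsolitons criteria} i. gives directly that $(\mathfrak n, g)$ is a nilsoliton, so $\mathfrak n$ is an Einstein nilradical, and that $\mathfrak a$ is abelian with $(ad\ A)^t \in Der(\mathfrak s)$. The relation $[ad\ A, (ad\ A)^t]=0$ shows each $ad\ A|_{\mathfrak n}$ is normal, hence semisimple, so $\mathfrak a$ --- embedded into $Der(\mathfrak n)$ via $A \mapsto ad\ A|_{\mathfrak n}$, which is injective since $ad\ A = 0$ would force $g(A,A) = \frac{-1}{c} tr\ S(ad\ A)^2 = 0$ by (d) --- is an abelian reductive subalgebra. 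Finally, condition (d) with $c<0$ (Theorem \ref{thm: lauret solsolitons criteria} ii.) forces $tr\ S(ad\ A)^2 = -c\, g(A,A) > 0$ for $A \neq 0$, so $ad\ A$ is not skew-symmetric and no nonzero element of $\mathfrak a$ has only purely imaginary eigenvalues. This is exactly the asserted structure.
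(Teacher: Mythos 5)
Your overall strategy coincides with the paper's --- Mostow plus the centralizer theorem (Theorem \ref{thm: G dist iff ZGH dist}) to produce a nilsoliton compatible with $\mathfrak a$, then Lauret's criteria (Theorem \ref{thm: lauret solsolitons criteria}) for both directions --- but the step you yourself identify as the crux fails as written. There is in general \emph{no} inner product on $\mathfrak n$ for which $\mathfrak a = \mathfrak a^t$. Mostow's theorem \cite{Mostow:SelfAdjointGroups} applies to \emph{algebraic} reductive groups, hence only to the Zariski hull $\overline{\mathfrak a}$ of $\mathfrak a$, and a subalgebra of a $\theta$-stable algebra need not itself be $\theta$-stable. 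A concrete counterexample satisfying every hypothesis of the theorem: $\mathfrak n = \mathbb R^2$ abelian (an Einstein nilradical) and $\mathfrak a = \mathbb R(Id + J) \subset Der(\mathfrak n) = \mathfrak{gl}(2,\mathbb R)$, where $J$ is the rotation generator. This $\mathfrak a$ is abelian and reductive, and its nonzero elements have eigenvalues $t(1\pm i)$, none purely imaginary; yet for any inner product on $\mathbb R^2$, if the adjoint $A^*$ of $A = Id + J$ lay in $\mathfrak a$, comparing traces would force $A^* = A$, which is impossible since a self-adjoint operator has real eigenvalues. Here the hull is $\overline{\mathfrak a} = \mathbb R\, Id \oplus \mathbb R J$, and only the hull can be made $\theta$-stable --- which is precisely why the paper phrases everything in terms of the Zariski closure $\overline A$ and never asserts that $\mathfrak a$ itself is transpose-stable.

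The error propagates into your verification of Lauret's condition (c): in the example above $(ad\ A)^t$ is \emph{not} $ad$ of an element of $\mathfrak a$, so it is not an inner derivation of $\mathfrak s$. The repair is exactly the paper's route: conjugate so that $\overline{\mathfrak a}$ (not $\mathfrak a$) is $\theta$-stable; then $H = \overline{\exp\mathfrak a}$ is $\theta$-stable and your centralizer argument goes through verbatim, giving $g \in Z_G(H)$ commuting with all of $\overline{\mathfrak a}$, hence with both $\mathfrak a$ and $\mathfrak a^t$. One then gets $A^{*_h} = A^t \in \overline{\mathfrak a}$: not in $\mathfrak a$, but still a derivation of the limit bracket (the hull consists of derivations and is preserved by $g$) which commutes with every element of $\mathfrak a$, since the hull is abelian. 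That is enough to verify (c) in its equivalent form $[ad\ A,(ad\ A)^t]=0$ and to run your normality argument for positive definiteness in (d); it simply does not yield the ``inner derivation'' statement. With this correction the forward direction is sound. Your converse is correct and in fact more detailed than the paper's, which just cites Lauret's structure theorem together with the observation recorded after Theorem \ref{thm: lauret solsolitons criteria}.
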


\begin{cor}\label{cor: classification of solv admitting negative Einstein}
If in addition to the above hypotheses,  $\mathfrak a$ contains some pre-Einstein derivation  $D$, then $\mathfrak s= \mathfrak a\ltimes \mathfrak n$ admits a negative Einstein metric. Moreover, every solvable algebra admitting a negative Einstein metric arises this way.
\end{cor}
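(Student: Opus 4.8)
The plan is to read the corollary as the specialization of Theorem~\ref{thm: classification of solv admitting negative Einstein} from solsolitons to Einstein metrics, using the fact (Theorem~\ref{thm: lauret solsolitons criteria}) that a negative Einstein metric is precisely a solsoliton whose derivation $D$ in $Ric = cId + D$ vanishes. Thus both directions reduce to controlling when the solsoliton produced by the theorem is Einstein, and the pre-Einstein derivation is exactly the bookkeeping device that detects this. Throughout I would phrase conditions in terms of the pre-Einstein derivation rather than the nilsoliton derivation $\psi$, since these agree up to a positive scalar only when $\mathfrak n$ is non-abelian.

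For the forward implication I would first invoke Theorem~\ref{thm: classification of solv admitting negative Einstein} to equip $\mathfrak s = \mathfrak a \ltimes \mathfrak n$ with its solsoliton metric: a nilsoliton metric on $\mathfrak n$ (so $Ric_{\mathfrak n} = cId + \psi$, with $c<0$ and $\psi$ a nonnegative multiple of the pre-Einstein derivation, $\psi = 0$ exactly when $\mathfrak n$ is abelian) together with $\langle A, B\rangle = \frac{-1}{c}\,tr\big(S(ad\,A)\,S(ad\,B)\big)$ on $\mathfrak a$. The crucial observation is that the mean curvature vector $H \in \mathfrak a$, determined by $\langle H, A\rangle = tr(ad\,A)$, is forced into the pre-Einstein direction. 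Writing the hypothesized pre-Einstein derivation as $D_0 = ad\,A_0$ for some $A_0 \in \mathfrak a$, and choosing the nilsoliton metric within its isometry class so that $D_0$ is symmetric, the defining identity $tr(D_0 E) = tr(E)$ for all $E \in Der(\mathfrak n)$ together with symmetry of $D_0$ gives $tr(ad\,A) = tr(D_0\,ad\,A) = tr\big(S(ad\,A_0)\,S(ad\,A)\big) = -c\,\langle A_0, A\rangle$. Hence $H = -c\,A_0$ and $ad\,H|_{\mathfrak n} = -c\,D_0$, a positive multiple of the pre-Einstein derivation. I would then appeal to Nikolayevsky's analysis (Section~\ref{sec: pre-einstein derivations}): once the mean-curvature derivation is the correct positive multiple of the pre-Einstein derivation, the solsoliton correction $D$ vanishes, so $\mathfrak s$ carries a negative Einstein metric.

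The converse I would obtain by running this backwards. A negative Einstein metric is in particular a solsoliton, so the converse half of Theorem~\ref{thm: classification of solv admitting negative Einstein} presents $\mathfrak s$ as $\mathfrak a \ltimes \mathfrak n$ with $\mathfrak n$ an Einstein nilradical and $\mathfrak a$ abelian, reductive, with no element having only purely imaginary eigenvalues. The Einstein condition $Ric = cId$ now pins $ad\,H|_{\mathfrak n}$ to be a positive multiple of the pre-Einstein derivation (the same identity as above, read as a constraint), so $ad\,H$ is itself a pre-Einstein derivation lying in $\mathfrak a$.

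The main obstacle is the single forward implication that ``$ad\,H$ equals a positive multiple of the pre-Einstein derivation $\Rightarrow D = 0$'', i.e. that aligning $H$ with the pre-Einstein direction actually forces $Ric|_{\mathfrak n} = cId$ and annihilates the cross terms and the $\mathfrak a$-block of $Ric$; this is where the defining trace identity does the real work and where I expect to lean on Section~\ref{sec: pre-einstein derivations}, including the compatibility of the directions of $\mathfrak a$ orthogonal to $H$ (which are exactly the trace-free directions). Two further points need care: the degenerate abelian-nilradical case, where $\psi = 0$ while the pre-Einstein derivation is a nonzero multiple of the identity; and the matching of the abstractly given $D_0 \in \mathfrak a$ with the metrically defined data, which requires selecting the nilsoliton metric so that $D_0$ is symmetric and verifying that the attendant automorphism of $\mathfrak n$ respects the extension $\mathfrak a \ltimes \mathfrak n$.
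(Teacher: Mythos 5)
Your proposal is correct and follows exactly the route the paper itself takes: the paper's ``proof'' of this corollary consists only of the remark that it is essentially a combination of Theorem~\ref{thm: classification of solv admitting negative Einstein} with Lauret's structural results (Theorem~\ref{thm: lauret solsolitons criteria}), the details being left to the reader, and your mean-curvature computation $H=-c\,A_0$ via the trace identity $tr(D_0\,ad\ A)=tr(ad\ A)$ is precisely that combination carried out. The step you flag as the ``main obstacle'' closes with a short argument in the spirit of what you already wrote: two pre-Einstein derivations of $\mathfrak n$ that are both symmetric with respect to the same inner product must coincide (their difference $\psi$ lies in $Der(\mathfrak n)$, is symmetric, and the defining identity applied to $\psi$ itself gives $tr(\psi\,\psi)=tr(\psi)-tr(\psi)=0$, forcing $\psi=0$), so your $D_0$ equals $(Ric_{\mathfrak n}-cId)/(-c)$, hence $S(ad\ H)=-cD_0$ is exactly the nilsoliton's Einstein derivation and the solsoliton correction term vanishes.
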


The above characterization of solvable Lie groups admitting negative Einstein metrics is essentially a combination of the above characterization of solsolitons together with Lauret's structural results, cf. Theorem \ref{thm: lauret solsolitons criteria}. As such, we leave the proof of the corollary to the reader.  The definition of pre-Einstein derivation may be found in Definition \ref{def: pre-einstein derivation}.

\bigskip

Below we will prove that the algebras described above admit solsoliton metrics.  The fact that all solsolitons have such a rigid algebraic structure is the work of Lauret, see Theorem \ref{thm: lauret solsolitons criteria}.

\begin{proof}[Proof of Theorem \ref{thm: classification of solv admitting negative Einstein}]
Take $\mathfrak a$ as above and consider it as a subalgebra of $\mathfrak{gl}(\mathfrak n_\mu)$.  Let $A$ be the connected subgroup of $GL(\mathfrak n_\mu)$ with Lie algebra $\mathfrak a$.  Denote by $\overline A$ the Zariski closure of $A$ in $GL(\mathfrak n_\mu)$ (i.e., the smallest algebraic group containing $A$) and its Lie algebra by $\overline {\mathfrak a}$.  As $Aut(\mathfrak n_\mu)$ is an algebraic group, $\overline A \subset Aut(\mathfrak n_\mu)$.  Moreover, $\overline A$ is abelian and reductive.  The fact that $\overline A$ is abelian follows immediately from being the closure of an abelian group.  To see that this group is reductive, one can `diagonalize' $\mathfrak a$ to see that $\overline A$ is a subgroup of a torus (abelian, reductive) of $GL(\mathfrak n_\mu)$ and hence has no non-trivial nilpotent elements.

It is a classical fact that there exists  $g\in GL(n,\mathbb R)$ such that $g \overline {\mathfrak a} g^{-1}$ is $\theta$-stable since $\overline A$ is both algebraic and reductive, see \cite{Mostow:SelfAdjointGroups}. Now $\mathfrak a_0 = g\mathfrak a g^{-1}$ is a reductive, abelian subalgebra of $Der(g^*\mu)$ and
    $$\mathfrak a \ltimes \mathfrak n_\mu   \simeq \mathfrak a_0 \ltimes \mathfrak n_{g^*\mu}$$
via the isomorphism which is the identity on $\mathfrak a$ and $g$ on $\mathfrak n_\mu$.  The nilpotent Lie group $N_{g^*\mu}$ is an Einstein nilradical if and only if $N_\mu$ is so, as they are isomorphic.

We will apply Theorem \ref{thm: G dist iff ZGH dist} to the subgroup $\overline A_0=g\overline A g^{-1} \subset GL(n,\mathbb R)$ with Lie algebra $\overline{\mathfrak a_0}=g\overline a g^{-1}$.  This group is $\theta$-stable as its Lie algebra is so.  Let $\mu_0 = g^*\mu$ and consider the limit $\mu_\infty$ of the flow $\mu_t$.  The Riemannian nilpotent Lie group $N_{\mu_\infty}$ is a nilsoliton and $\mu_\infty = g'^* \mu_0$ for some $g'\in Z_{GL(n,\mathbb R)}(\overline A_0)$.  As $g'$ commutes with $\mathfrak a_0$ we see that the following solvable algebras are isomorphic
    $$\mathfrak a \ltimes \mathfrak N_{\mu} \simeq \mathfrak a_0 \ltimes \mathfrak N_{\mu_0} \simeq g' \mathfrak a_0 g'^{-1} \ltimes \mathfrak N_{g'^*\mu_0} = \mathfrak a_0 \ltimes \mathfrak N_{g'^*\mu_0} = \mathfrak a_0 \ltimes \mathfrak N_{\mu_\infty}$$
but the last metric algebra satisfies all the criteria of Theorem \ref{thm: lauret solsolitons criteria} to be a solsoliton metric Lie algebra.
\end{proof}

\subsection*{Construction of the finer subgroup $I_G(H) \subset Z_G(H) \subset G$}
In the above proofs, one can use a smaller subgroup of $Z_G(H)$ whose orbit will contain critical points of $F=||m_G||^2$.  This group will be used in Section \ref{sec: alg for nilsoliton}  to construct the algorithm that determines when a nilpotent Lie group is an Einstein nilradical.

\begin{prop}\label{prop: iG(H)}  Let $H$ be a $\theta$-stable subgroup of $Aut(\mu)$, as in Theorem \ref{thm: G dist iff ZGH dist}.  Assume $G\cap Aut(\mu) = H$, i.e. the stabilizer at $\mu$ of the group $G$ acting on $V=\wedge^2(\mathbb R^n)^*\otimes \mathbb R^n$ is $H$.

There exists a real algebraic reductive subgroup $I_G(H)$ of $G$ such that $Z_G(H)=I_G(H) (Z_G(H) \cap H)$ where $Z_G(H)\cap H$ is the stabilizer subgroup of $Z_G(H)$ at $\mu$ and the Lie algebra of $I_G(H)$ satisfies
    $$\mathfrak i_G(H) = \{ X\in \mathfrak z_G(H) \ | \ tr(XY)=0 \mbox{ for all } Y\in \mathfrak z_G(H) \cap \mathfrak h\} $$
Moreover, the orbits coincide, i.e.  $I_G(H)\cdot \mu = Z_G(H)\cdot \mu$.
\end{prop}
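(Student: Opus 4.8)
The plan is to reduce everything to a $B_0$-orthogonal splitting of the Lie algebra $\mathfrak z := \mathfrak z_G(H)$ of $Z_G(H)$, where $B_0(X,Y) = tr(XY)$ is the trace form, and then integrate this splitting to the group level. Write $\mathfrak h_0 := \mathfrak z_G(H) \cap \mathfrak h$ for the stabilizer subalgebra; since $G \cap Aut(\mu) = H$, the stabilizer of $\mu$ in $Z_G(H)$ is exactly $Z_G(H) \cap H$, with Lie algebra $\mathfrak h_0$. The most useful observation is that $\mathfrak h_0$ is central in $\mathfrak z$: because $Z_G(H)$ centralizes $H$, differentiating gives $[\mathfrak z, \mathfrak h] = 0$, and in particular $\mathfrak h_0 \subseteq Z(\mathfrak z)$.

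First I would record the relevant properties of $B_0$. It is $\theta$-invariant, since $B_0(\theta X, \theta Y) = tr(X^t Y^t) = B_0(X,Y)$, and on any $\theta$-stable subalgebra it is negative definite on the skew-symmetric part, positive definite on the symmetric part, with the two parts $B_0$-orthogonal; hence it is nondegenerate there. By Theorem \ref{thm: G dist iff ZGH dist} the group $Z_G(H)$ is reductive and $\theta$-stable, and $\mathfrak h_0 = \mathfrak z \cap \mathfrak h$ is $\theta$-stable as an intersection of $\theta$-stable subalgebras. Nondegeneracy of $B_0$ on $\mathfrak h_0$ then yields the vector-space decomposition $\mathfrak z = \mathfrak h_0 \oplus \mathfrak i$, with $\mathfrak i := \mathfrak h_0^\perp = \mathfrak i_G(H)$; since $\theta$ preserves both $B_0$ and $\mathfrak h_0$, it preserves $\mathfrak i$, and $\mathfrak i \cap \mathfrak h_0 = 0$ (the radical of $B_0|_{\mathfrak h_0}$ being trivial). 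Writing $\mathfrak z = Z(\mathfrak z) \oplus [\mathfrak z, \mathfrak z]$ and using that central elements are $B_0$-orthogonal to commutators (a one-line trace computation), together with $\mathfrak h_0 \subseteq Z(\mathfrak z)$, I obtain $\mathfrak i = [\mathfrak z, \mathfrak z] \oplus (Z(\mathfrak z) \cap \mathfrak h_0^\perp)$. This exhibits $\mathfrak i$ as the sum of the semisimple ideal and an abelian central piece, so $\mathfrak i$ is a reductive, $\theta$-stable ideal of $\mathfrak z$.

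Next I would integrate $\mathfrak i$ to a subgroup $I_G(H)$ and check it is real algebraic and reductive. The semisimple summand $[\mathfrak z, \mathfrak z]$ integrates to an algebraic subgroup automatically. The main obstacle is the central summand $Z(\mathfrak z) \cap \mathfrak h_0^\perp$: a subspace of the Lie algebra of the central torus need not exponentiate to a closed algebraic subgroup. I expect to resolve this by rationality. Since $H_0 = Z_G(H) \cap H$ is an algebraic subgroup whose identity component lies in the central torus $C = Z(Z_G(H))^0$ (as $\mathfrak h_0$ is central), the space $\mathfrak h_0$ is rational in $\mathfrak c = \mathrm{Lie}(C)$ with respect to the character lattice; and $B_0|_{\mathfrak c} = \sum_\lambda (\dim V_\lambda)\,\lambda \otimes \lambda$ is a rational quadratic form in the integral weights $\lambda$, so its orthogonal complement $Z(\mathfrak z) \cap \mathfrak h_0^\perp$ is again rational and hence integrates to an algebraic subtorus. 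Taking $I_G(H)$ to be the $\theta$-stable algebraic subgroup with Lie algebra $\mathfrak i$ — whose reductivity is immediate from $\theta$-stability of $\mathfrak i$ — finishes this step.

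Finally I would deduce the orbit and group identities. Because $\mathfrak i$ and $\mathfrak h_0$ commute and span $\mathfrak z$, the identity components satisfy $Z_G(H)^0 = I_G(H)^0\, H_0^0$. Equivalently, since $\mathfrak h_0$ acts trivially at $\mu$ while $\mathfrak z = \mathfrak i \oplus \mathfrak h_0$, the infinitesimal orbit directions coincide: $\pi(\mathfrak i)\mu = \pi(\mathfrak z)\mu$. As $\mathfrak i$ is an ideal, $I_G(H)^0$ is normal in $Z_G(H)^0$, so its orbits in the connected set $Z_G(H)^0 \cdot \mu$ are open and permuted by $Z_G(H)^0$; a connectedness argument forces there to be only one, giving $I_G(H)^0 \cdot \mu = Z_G(H)^0 \cdot \mu$, and choosing the finitely many components of $I_G(H)$ to meet every class of $\pi_0(Z_G(H))$ modulo $H_0$ upgrades this to $I_G(H) \cdot \mu = Z_G(H) \cdot \mu$. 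The group factorization then follows at once: given $z \in Z_G(H)$, write $z \cdot \mu = i \cdot \mu$ with $i \in I_G(H)$; then $i^{-1}z$ fixes $\mu$, so $i^{-1}z \in Z_G(H) \cap H$ and $z \in I_G(H)\,(Z_G(H) \cap H)$, as required.
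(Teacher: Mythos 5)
Your argument is correct in its essentials, and it reaches the statement by a genuinely different route than the paper. The paper's proof is organized around one technical lemma: for each $\theta$-stable algebraic reductive element $\alpha\in\mathfrak z_G(H)\cap\mathfrak h$, the subalgebra $\mathfrak g_\alpha\ominus\alpha=\{X\in\mathfrak g_\alpha \mid tr(X\alpha^t)=0\}$ is algebraic, proved by diagonalizing $\alpha$ with rational eigenvalues and cutting out the group by explicit equations $\prod_i \det(g_i)^{qa_i}=1$ (the skew-symmetric case is reduced to the symmetric one by complexifying and replacing $\alpha$ by $i\alpha$). The algebra $\mathfrak i_G(H)$ is then exhibited as an intersection of such algebras over the elements of the Cartan decomposition of $\mathfrak z_G(H)\cap\mathfrak h$, and algebraicity follows because intersections of algebraic groups are algebraic; reductivity, the factorization, and the orbit equality are dispatched with ``quickly follows.'' You instead exploit an observation the paper never makes explicit: since $Z_G(H)$ centralizes $H$, differentiating $zhz^{-1}=h$ in both variables gives $[\mathfrak z,\mathfrak h]=0$, so $\mathfrak h_0=\mathfrak z_G(H)\cap\mathfrak h$ is central in $\mathfrak z=\mathfrak z_G(H)$. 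This yields the splitting $\mathfrak i_G(H)=[\mathfrak z,\mathfrak z]\oplus\bigl(Z(\mathfrak z)\cap\mathfrak h_0^\perp\bigr)$, from which reductivity, $\theta$-stability, ideality (hence normality of $I_G(H)$), and the identity-component versions of the factorization and orbit equality all come for free; algebraicity is reduced to the central-torus summand, handled by rationality of the trace form against the character lattice. Both proofs rest on the same rationality principle; yours buys a cleaner structural picture and actually supplies the group/orbit identities that the paper leaves implicit, while the paper's element-by-element lemma buys concrete polynomial equations and avoids invoking torus character lattices.

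Two steps in your write-up are thinner than they should be. First, the rationality argument treats the central torus as if it were split: on an anisotropic (compact) central factor the weights are not real-valued on $\mathfrak c$, so ``rational with respect to the character lattice'' must be read on the complexification, with $(\mathfrak h_0)_{\mathbb C}$ rational and Galois-stable, whence its $B_0$-orthogonal complement is defined over $\mathbb R$ and integrates to a real algebraic subtorus. This is exactly the difficulty the paper's skew-symmetric case resolves by complexifying, and it is fixable in your framework. Second, your passage from identity components to full groups --- ``choosing the finitely many components of $I_G(H)$ to meet every class of $\pi_0(Z_G(H))$ modulo $H_0$'' --- is not an argument: a union of chosen components need not be a group, let alone an algebraic reductive one. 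A genuine fix is available: let $\chi_\alpha$ denote the algebraic characters of $Z_G(H)$ with differential $X\mapsto tr(X\alpha)$ (these exist for a spanning set of rational $\alpha\in\mathfrak h_0$, as in the paper's lemma), and define $I_G(H)=\{z\in Z_G(H) \mid \chi_\alpha(z)^2=1 \mbox{ for all } \alpha\}$; since $\pi_0(Z_G(H))$ is finite, each $\chi_\alpha(z)$ has finite order modulo the connected group $\chi_\alpha(H_0^0)\subset\mathbb R_{>0}$, which forces $\chi_\alpha(z)\in\{\pm 1\}\cdot\chi_\alpha(H_0^0)$ and hence $Z_G(H)=I_G(H)\,(Z_G(H)\cap H)$. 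To be fair, the paper's own proof is silent on this component issue as well, so this is a shared omission rather than a defect peculiar to your approach.
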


Here the Lie algebra $\mathfrak z _G(H)= \mathfrak z_\mathfrak g(\mathfrak h)$ of $Z_G(H)$ is the commutator of $\mathfrak h$ in $\mathfrak g$.  One can see by direct calculation that $\mathfrak i_G(H)$ is a Lie subalgebra.  We show that its corresponding Lie subgroup of $GL(n,\mathbb R)$ is an algebraic group so that we can exploit the methods of Section \ref{sec: variety of Lie brackets}.

\begin{defin} An element $X\in \mathfrak{gl}(n,\mathbb R)$ will be called algebraic if it is tangent to a real algebraic 1-parameter subgroup of $GL(n,\mathbb R)$.
More generally, a Lie subalgebra will be called algebraic if it is tangent to an algebraic subgroup of $GL(n,\mathbb R)$.
\end{defin}

An element $X\in \mathfrak g$ is called reductive if it is semisimple (over $\mathbb C$).  We observe that if $G\subset GL(n,\mathbb R)$ is any real reductive algebraic subgroup, the set of reductive algebraic elements of $\mathfrak g$ is dense.  As we are considering $G$ which are $\theta$-stable, the following bilinear form is an inner product on $\mathfrak g$
    $$\ip{X,Y} = tr(XY^t)$$
Given a $\theta$-stable element $\alpha \in \mathfrak g$ (i.e. $\alpha $ is symmetric or skew-symmetric), we define the subalgebra
    $$\mathfrak g_\alpha \ominus \alpha = \{ X\in \mathfrak g_\alpha \ | \ tr(X\alpha^t)=0\}$$
where $\mathfrak g_\alpha = \{X\in \mathfrak g \ | \ [X,\alpha]=0\}$.  Since $\alpha^t = \pm \alpha$, it follows that $\mathfrak g_\alpha \ominus \alpha$ is $\theta$-stable and  an ideal of $\mathfrak g_\alpha$.

\begin{lemma} The subalgebra $\mathfrak g_\alpha \ominus \alpha$ is an algebraic Lie subalgebra.
\end{lemma}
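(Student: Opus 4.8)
The plan is to reduce $\mathfrak{g}_\alpha \ominus \alpha$ to pieces whose algebraicity is standard and then isolate the one genuinely delicate point. Throughout I use that a $\theta$-stable $\alpha$ is semisimple (symmetric gives real eigenvalues, skew gives imaginary ones), and — crucially — that $\alpha$ is a \emph{reductive algebraic} element in the sense of the preceding paragraph. This hypothesis is essential: without it the statement is false, since for $\mathfrak{g}=\mathfrak{gl}(2,\mathbb{R})$ and $\alpha=\mathrm{diag}(\sqrt{2},-1)$ one has $\mathfrak{g}_\alpha\ominus\alpha=\mathbb{R}(1,\sqrt{2})$, a non-algebraic line. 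Granting it, let $Z=\overline{\exp(\mathbb{R}\alpha)}$ be the Zariski closure, an algebraic torus with $\mathrm{Lie}(Z)=\mathbb{R}\alpha$. Since $\mathrm{ad}\,\alpha$ is semisimple, $\mathfrak{g}_\alpha=\ker(\mathrm{ad}\,\alpha)$ is exactly the fixed-point set of $\mathrm{Ad}(Z)$, so $\mathfrak{g}_\alpha=\mathrm{Lie}(Z_G(Z))$. Being the centralizer of a torus in the reductive algebraic group $G$, the group $Z_G(Z)$ is reductive and algebraic; hence $\mathfrak{g}_\alpha$ is an algebraic, reductive, $\theta$-stable Lie algebra.

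Next I split off the semisimple part. Write $\mathfrak{g}_\alpha=\mathfrak{z}\oplus\mathfrak{g}_\alpha'$, where $\mathfrak{z}=\mathfrak{z}(\mathfrak{g}_\alpha)$ is the (abelian, algebraic, $\theta$-stable) center and $\mathfrak{g}_\alpha'=[\mathfrak{g}_\alpha,\mathfrak{g}_\alpha]$ is semisimple; note $\alpha\in\mathfrak{z}$. A short computation gives $\alpha\perp\mathfrak{g}_\alpha'$: for $Y,W\in\mathfrak{g}_\alpha$, using $[Y,W]^t=[W^t,Y^t]$, the identity $tr(\alpha[A,B])=tr([\alpha,A]B)$, the $\theta$-stability of $\mathfrak{g}_\alpha$ (so $W^t\in\mathfrak{g}_\alpha$) and the centrality of $\alpha$, one finds $\langle\alpha,[Y,W]\rangle=tr(\alpha[W^t,Y^t])=tr([\alpha,W^t]Y^t)=0$. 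Since the defining condition $\langle X,\alpha\rangle=0$ therefore constrains only the central component of $X$, this yields the splitting $\mathfrak{g}_\alpha\ominus\alpha=(\mathfrak{z}\ominus\alpha)\oplus\mathfrak{g}_\alpha'$, a direct sum of a central piece and an ideal. The semisimple factor $\mathfrak{g}_\alpha'$ is automatically algebraic (every semisimple subalgebra of $\mathfrak{gl}(n,\mathbb{R})$ is, by Chevalley), so everything reduces to showing the central hyperplane $\mathfrak{z}\ominus\alpha=\alpha^\perp\cap\mathfrak{z}$ is algebraic.

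This central hyperplane is the main obstacle, and it is exactly where algebraicity of $\alpha$ is used. The algebra $\mathfrak{z}$ is the Lie algebra of the central torus $Z(Z_G(Z))^\circ$, so it carries a rational structure $\mathfrak{z}(\mathbb{Q})$ (from the cocharacter lattice) for which the algebraic subalgebras — i.e.\ Lie algebras of subtori — are precisely the rational subspaces, and the algebraic elements are precisely the rational points up to scale. Because $\alpha$ is $\theta$-stable it lies entirely in the symmetric (split) part $\mathfrak{z}\cap\mathfrak{p}$ or entirely in the skew (compact) part $\mathfrak{z}\cap\mathfrak{k}$, and on each of these the form $\langle X,Y\rangle=tr(XY^t)=\pm\,tr(XY)$ is defined over $\mathbb{Q}$ relative to this rational structure. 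Hence $\alpha$ algebraic forces $\alpha\in\mathfrak{z}(\mathbb{Q})$ after scaling, whence $\alpha^\perp\cap\mathfrak{z}$ is a rational hyperplane and therefore algebraic. The real work here is checking this compatibility between the trace form and the cocharacter lattice; it is precisely what excludes the irrational counterexample above.

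Finally I reassemble. Let $A$ and $B$ be the connected algebraic subgroups with Lie algebras $\mathfrak{z}\ominus\alpha$ and $\mathfrak{g}_\alpha'$; they commute, since $\mathfrak{z}\ominus\alpha$ is central in $\mathfrak{g}_\alpha$. Then $AB$ is the image of the algebraic-group homomorphism $A\times B\to GL(n,\mathbb{R})$, $(a,b)\mapsto ab$, hence a closed algebraic subgroup whose Lie algebra is $(\mathfrak{z}\ominus\alpha)\oplus\mathfrak{g}_\alpha'=\mathfrak{g}_\alpha\ominus\alpha$. Therefore $\mathfrak{g}_\alpha\ominus\alpha$ is an algebraic Lie subalgebra, as claimed.
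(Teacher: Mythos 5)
Your reduction is genuinely different from the paper's proof, and most of it is correct. The paper argues by explicit construction: for $\alpha$ symmetric it diagonalizes, notes the eigenvalues $a_i$ are rational because $\alpha$ is algebraic, and writes down the algebraic group $\{g\in G_\alpha \mid \prod_i \det(g_i)^{qa_i}=1\}$; for $\alpha$ skew-symmetric it complexifies, applies the symmetric case to $i\alpha$, and intersects back with $GL(n,\mathbb R)$. You instead argue structurally: $\mathfrak g_\alpha = \mathrm{Lie}\,(Z_G(Z))$ is reductive algebraic, your orthogonality computation correctly gives $\mathfrak g_\alpha\ominus\alpha = (\mathfrak z\ominus\alpha)\oplus[\mathfrak g_\alpha,\mathfrak g_\alpha]$, the semisimple summand is algebraic by Chevalley, and the problem becomes a hyperplane in the central torus. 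You are also right that algebraicity of $\alpha$ must be assumed (your $\mathrm{diag}(\sqrt2,-1)$ example is a valid witness); the paper uses this hypothesis too, in the sentence ``the eigenvalues $a_i$ are rational as $\alpha$ is algebraic,'' the lemma being applied only to algebraic reductive elements.

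There is, however, a genuine gap at exactly the step that carries the content of the lemma. Your claim that $\mathfrak z$ carries a rational structure for which the algebraic subalgebras are \emph{precisely} the rational subspaces is false for a real torus that is neither split nor compact, which is exactly the situation when both $\mathfrak z\cap\mathfrak p$ and $\mathfrak z\cap\mathfrak k$ are nonzero. Take $T=\mathbb C^\times\subset GL(2,\mathbb R)$, with Lie algebra spanned by $I$ and the rotation generator $J$; this $T$ is $\theta$-stable, yet its only one-dimensional algebraic subalgebras are $\mathbb R I$ and $\mathbb R J$, since any line $\mathbb R(I+cJ)$ with $c\neq 0$ exponentiates to a spiral whose Zariski closure is all of $\mathbb C^\times$. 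No rational structure can match finitely many algebraic lines with infinitely many rational ones; moreover the $\mathbb R$-cocharacter lattice spans only the split part of $\mathrm{Lie}(T)$, so it does not even define a rational structure on all of $\mathfrak z$. The correct statement is that the algebraic subalgebras of $\mathfrak z$ are the direct sums of a rational subspace of $\mathfrak z\cap\mathfrak p$ (relative to the cocharacter lattice of the maximal split subtorus) and a rational subspace of $\mathfrak z\cap\mathfrak k$ (relative to the $\ker\exp$ lattice of the maximal compact subtorus). Fortunately, the purity observation you already made is exactly what repairs the argument: since $\alpha$ is symmetric or skew and $\mathfrak k\perp\mathfrak p$ under $tr(XY^t)$, the hyperplane splits along the Cartan decomposition, e.g.\ for $\alpha$ symmetric
\begin{equation*}
\mathfrak z\ominus\alpha \;=\; \bigl((\mathfrak z\cap\mathfrak p)\ominus\alpha\bigr)\,\oplus\,(\mathfrak z\cap\mathfrak k),
\end{equation*}
and each summand is algebraic by the corrected statement ($\alpha$ being a rational point of its pure part up to scale, because it is an algebraic element). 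The sum of these commuting algebraic subalgebras with $[\mathfrak g_\alpha,\mathfrak g_\alpha]$ is then algebraic as in your final paragraph. With that substitution your proof is complete; as written, the inference ``rational hyperplane, therefore algebraic'' does not follow.
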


From this lemma, the proposition above quickly follows.  To see this, observe that $\mathfrak z_\mathfrak g (\mathfrak h) \cap \mathfrak h$ is $\theta$-stable and  decompose $\mathfrak z_\mathfrak g (\mathfrak h) \cap \mathfrak h = (\mathfrak z \cap \mathfrak h)_\mathfrak k \oplus (\mathfrak z\cap \mathfrak h)_\mathfrak p$ into its Cartan decomposition.  All the elements contained in $(\mathfrak z\cap \mathfrak h)_\mathfrak k$ and $(\mathfrak z\cap \mathfrak h)_\mathfrak p$ are algebraic reductive elements.  Now apply the above lemma to all these algebraic reductive elements and use the fact that the intersection of algebraic groups is algebraic.

\begin{proof}[Proof of lemma] The cases of $\alpha$ symmetric and skew-symmetric must be handled separately.\bigskip

\noindent
\textbf{Case: $\alpha$ symmetric.}  Every such $\alpha$ is conjugate via $O(n,\mathbb R)$ to a diagonal matrix.  As the above inner product is $Ad\ O(n,\mathbb R)$ invariant and the conjugate of an algebraic group is algebraic, we may reduce to the case that $\alpha$ is diagonal.  Also, we may reduce to the case $G=GL(n,\mathbb R)$ as the intersection of algebraic groups is algebraic.

Further more, we may assume (via conjugation by $O(n,\mathbb R)$) that the eigenvalues are weakly increasing:  $\alpha = diag\{a_1,\dots,a_1,\dots, a_k,\dots, a_k\}$.  The eigenvalues $a_i$ are rational as $\alpha$ is algebraic.  Now the subalgebra $\mathfrak g_\alpha$ consists of block diagonal matrices $\mathfrak{gl}(n_1,\mathbb R)\times \dots \times \mathfrak{gl}(n_k,\mathbb R)$.  This is clearly an algebraic Lie algebra whose Lie group $G_\alpha$ consists of the block matrices which are invertible.

The condition $X\in \mathfrak g_\alpha$ is now $\Sigma a_iX_i = 0$ where $X=blockdiag\{X_1,\dots,X_k\}$.  Write $g\in G_\alpha$ as a block diagonal matrix  $g=blockdiag\{ g_1,\dots,g_k\}$.  Then the algebraic group with Lie algebra $\mathfrak g_\alpha \ominus \alpha$ is
    $$\{g\in G_\alpha \ | \ \Pi \  det(g_i)^{qa_i} = 1 \}$$
where $q$ is the common integer such that $qa_i\in\mathbb Z$ for all $i=1,\dots, k$.\bigskip

\noindent
\textbf{Case: $\alpha$  skew-symmetric.} To prove the result in this case, we reduce to the above case and use complex algebraic groups.  We will construct a complex algebraic group whose intersection with $GL(n,\mathbb R)$ is the desired Lie group.  This Lie group will be algebraic as it is the intersection of algebraic groups.  
We refer the reader to \cite{Whitney} for an introduction to the relationship between real and complex varieties.

Observe that the above work for $\alpha$ symmetric could have been carried out over $\mathbb C$.  Consider $\mathfrak g_\alpha ^\mathbb C = \{ X\in \mathfrak{gl}(n,\mathbb C) \ | \ [X,\alpha ]=0 \}$.  Observe that $i\alpha$ has real eigenvalues (which may be assumed to be rational as above) and that $\mathfrak g_\alpha^\mathbb C = \mathfrak g_{i\alpha}^\mathbb C$ and $\mathfrak g_\alpha^\mathbb C \ominus \alpha = \mathfrak g_{i\alpha}^\mathbb C \ominus i\alpha$.  By conjugating with $U(n)\subset GL(n,\mathbb C)$, we may assume $i\alpha$ is diagonal.  Following the above work, but with complex groups instead of real, we have a complex  algebraic group over $\mathfrak g_\alpha^\mathbb C \ominus \alpha = \mathfrak g_{i\alpha}^\mathbb C \ominus i\alpha$.  Moreover, this group intersected with $GL(n,\mathbb R)$ is a real Lie group with the desired Lie algebra.  Counting dimensions, we see that the real points of this complex algebraic group are Zariksi dense and hence this Lie group is real algebraic.
\end{proof}

\section{Compact nilmanifolds}\label{sec: compact quotients}
In this section we apply the above results to compact quotients of nilpotent 
Lie groups that admit soliton metrics.

\begin{defin} Let $(M,g) = (\Gamma  \backslash N,g)$ be a compact nilmanifold where $\Gamma \subset N \subset Isom(N, g)$,  $ g$ is a left-invariant metric, and the metric on $M$ is the induced metric coming from $N$. The metric $g$ on $\Gamma \backslash N$ is called a local nilsoliton if $(N, g)$ is a nilsoliton.
\end{defin}

As in the case of Ricci solitons on nilpotent Lie groups, local nilsolitons may be characterized as critical points of a functional restricted to the set of locally $N$-invariant metrics.  In fact, these metrics are minima of the function $F(g)=\frac{\int_M tr\ Ric_g^2 \ dVol_g}{\int_M sc^2(g) \ dVol_g}$, restricted to the set of locally $N$-invariant metrics, see \cite{Jablo:RiemFunctionalOnNilpotent} for this point and more analysis on this functional.

\begin{remark} While nilsolitons are unique on a simply connected nilpotent Lie group (up to rescaling and isometry), this does not remain true for local nilsolitons on compact quotients.
\end{remark}

On compact nilmanifolds, local nilsoliton metrics are the most symmetric among all locally-left-invariant metrics (cf. Corollary \ref{cor: nilsoliton have max isom grp}).

\begin{thm}\label{thm: isom grp of local nilsoliton is maximal} Consider $M=\Gamma \backslash N$ endowed with a locally left-invariant metric $g$ where $N$ admits a nilsoliton.  Then there exists a local soliton $g'$ on $M$ such that $Isom(M,g) \subset Isom(M,g')$.
\end{thm}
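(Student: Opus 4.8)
The plan is to reduce the statement to the simply-connected Corollary \ref{cor: nilsoliton have max isom grp} by using the covering-space description of the isometry group of a compact nilmanifold. Since $N$ is simply connected, the projection $p:N\to M=\Gamma\backslash N$ is the universal Riemannian covering, and its deck transformation group is exactly $\Gamma$ acting by left translations. Left translations are isometries of every left-invariant metric, so $\Gamma\subset N\subset Isom(N,h)$ for any left-invariant $h$, and $\Gamma$ sits inside $\mathrm{Diff}(N)$ as the same subgroup independent of $h$. The standard covering formula then gives $Isom(M,h)\cong N_{Isom(N,h)}(\Gamma)/\Gamma$, where $N_{Isom(N,h)}(\Gamma)$ denotes the normalizer of $\Gamma$.

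First I would invoke Corollary \ref{cor: nilsoliton have max isom grp}: as $N$ admits a nilsoliton it is an Einstein nilradical, so for the given locally left-invariant metric $g$, viewed as a left-invariant metric on $N$, there is a nilsoliton metric $g'$ on $N$ with $Isom(N,g)\subset Isom(N,g')$. I would stress that this containment is genuinely one of subgroups of $\mathrm{Diff}(N)$: the proof of that corollary (through Theorem \ref{thm: comp solv unimod solsoliton have max isom grp}) produces $g'$ on the \emph{same} underlying Lie group, keeps the left-translation subgroup $N$ as the same set of diffeomorphisms, and embeds the isotropy $Aut(\mathfrak n)\cap O(g)\subset Aut(\mathfrak n)\cap O(g')$ by the same automorphisms (cf. Theorem \ref{thm: isom of compl solv unimod}). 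Because $g'$ is left-invariant it descends to $M$, and $(N,g')$ being a nilsoliton makes $g'$ a local nilsoliton on $M$ by definition.

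Next, because $\Gamma$ is the same subgroup inside both $Isom(N,g)$ and $Isom(N,g')$ and these two groups are nested inside $\mathrm{Diff}(N)$, their normalizers of $\Gamma$ are nested: $N_{Isom(N,g)}(\Gamma)=Isom(N,g)\cap N_{\mathrm{Diff}(N)}(\Gamma)\subset Isom(N,g')\cap N_{\mathrm{Diff}(N)}(\Gamma)=N_{Isom(N,g')}(\Gamma)$. Passing to the quotient by the common subgroup $\Gamma$ then yields $Isom(M,g)\cong N_{Isom(N,g)}(\Gamma)/\Gamma\subset N_{Isom(N,g')}(\Gamma)/\Gamma\cong Isom(M,g')$, which is exactly the asserted containment.

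The main obstacle is justifying the two structural inputs rigorously: (i) the identification $Isom(M,h)\cong N_{Isom(N,h)}(\Gamma)/\Gamma$, which rests on $N\to M$ being the universal Riemannian cover with deck group $\Gamma$ acting by isometric left translations; and (ii) the verification that the inclusion from Corollary \ref{cor: nilsoliton have max isom grp} is literally an inclusion of diffeomorphism groups fixing the common translation subgroup $N$ (hence $\Gamma$), not merely an abstract group embedding. Both follow from the explicit description of isometry groups of nilpotent Riemannian Lie groups in Theorem \ref{thm: isom of compl solv unimod} and \cite{GordonWilson:IsomGrpsOfRiemSolv}; once these are in place the normalizer computation is formal. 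If one prefers a direct point-group formulation over the normalizer quotient, the same conclusion follows by intersecting the isotropy containment $Aut(\mathfrak n)\cap O(g)\subset Aut(\mathfrak n)\cap O(g')$ with the lattice-preserving automorphisms $Aut_\Gamma(\mathfrak n)$, giving $Aut_\Gamma(\mathfrak n)\cap O(g)\subset Aut_\Gamma(\mathfrak n)\cap O(g')$, and then reassembling with the metric-independent translational part of $Isom(M,\cdot)$.
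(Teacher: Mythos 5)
Your proof is correct and follows essentially the same route as the paper: both reduce to Corollary \ref{cor: nilsoliton have max isom grp} via the universal Riemannian covering $N\to\Gamma\backslash N$, using the fact that the compatibility of a lifted isometry with $\Gamma$ is a metric-independent condition. The paper phrases this by lifting each $\phi\in Isom(M,g)$ and observing that the descent condition $\overline\phi(\Gamma n)=\Gamma\overline\phi(n)$ persists for $g'$, while you package the same argument as the normalizer identification $Isom(M,h)\cong N_{Isom(N,h)}(\Gamma)/\Gamma$; this is only a difference in presentation, and your explicit attention to the containment being one of subgroups of $\mathrm{Diff}(N)$ is a worthwhile clarification of the same point.
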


\begin{proof}The proof reduces to the corresponding statement on simply-connected covers: Corollary \ref{cor: nilsoliton have max isom grp}.

Let $\phi \in Isom(M,g)$ and consider the Riemannian quotient $\pi : N \to \Gamma \backslash N$.  The map $\phi$ lifts to a diffeomorphism $\overline \phi : N\to N$ such that $\pi \circ \overline \phi = \phi \circ \pi$.  As $(\Gamma \backslash N, g)$ and $(N,g)$ are locally isometric via $\pi$ and $\phi$ is an isometry, we have $\overline \phi \in Isom(N,g)$.  Conversely, every isometry of $M$ arises from $\overline \phi \in Isom(N,g)$ satisfying the condition
    \begin{equation}\label{eqn: isometry decends to quotient}\overline \phi (\Gamma n) = \Gamma \overline \phi(n) \quad  \mbox{ for all } n\in N\end{equation}
Observe that this condition is independent of any metric data.

By Corollary \ref{cor: nilsoliton have max isom grp}, there exists a nilsoliton $g'$ on $N$ such that $\overline \phi$ is an isometry of $(N,g')$ and this choice of $g'$ holds for all $\overline \phi$.  As the above relation (\ref{eqn: isometry decends to quotient}) still holds, the diffeomorphism $\phi : M \to M$ is an isometry relative to $g'$.
\end{proof}

\begin{thm}\label{thm: existence of local nilsoliton depends only on fund grp}The existence of a local soliton depends only on the fundamental group.
\end{thm}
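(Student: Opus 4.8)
The plan is to reduce the statement to a chain of equivalences, anchored by Malcev's rigidity theorem for lattices in nilpotent Lie groups together with the fact, already recorded in this paper, that the nilsoliton property is intrinsic to the underlying Lie algebra. No genuine computation is involved; the content is entirely structural.

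First I would unwind the definitions. A compact nilmanifold is $M=\Gamma \backslash N$ with $N$ a simply connected nilpotent Lie group and $\Gamma \subset N$ a lattice; since $N$ is contractible, $M$ is a $K(\Gamma,1)$ and the fundamental group is $\pi_1(M) \simeq \Gamma$. By the definition given in this section, $M$ admits a local nilsoliton precisely when $N$ admits a left-invariant nilsoliton metric: every left-invariant metric on $N$ descends to a locally left-invariant metric on $\Gamma \backslash N$, because left translations by elements of $\Gamma$ are isometries, and conversely every locally left-invariant metric lifts to a left-invariant metric on $N$. Thus the existence of a local nilsoliton on $M$ is equivalent to $N$ being an Einstein nilradical, which by Theorem \ref{thm: nilsoliton vs disting point} is an isomorphism invariant of the Lie algebra $\mathfrak n$ of $N$.

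Next I would invoke Malcev's rigidity theorem: any isomorphism between two lattices $\Gamma_1 \subset N_1$ and $\Gamma_2 \subset N_2$ in simply connected nilpotent Lie groups extends to an isomorphism $N_1 \simeq N_2$; equivalently, passing to the real Malcev completion recovers $N$, and hence its Lie algebra $\mathfrak n$, up to isomorphism from the abstract group $\Gamma$ alone. Consequently $\pi_1(M) \simeq \Gamma$ determines $\mathfrak n$ up to isomorphism. Chaining the equivalences now finishes the argument: if $M_1$ and $M_2$ are compact nilmanifolds with $\pi_1(M_1) \simeq \pi_1(M_2)$, then $\mathfrak n_1 \simeq \mathfrak n_2$, hence $\mathfrak n_1$ is an Einstein nilradical if and only if $\mathfrak n_2$ is, hence $M_1$ admits a local nilsoliton if and only if $M_2$ does. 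The main substantive input is Malcev's theorem, which guarantees that the abstract isomorphism type of $\Gamma$ pins down the ambient Lie group; once that is in hand, the remaining steps are purely definitional or already established in the text, so the principal care required is simply to verify that the correspondence between left-invariant metrics on $N$ and locally left-invariant metrics on $M$ respects the soliton condition, which is immediate from local isometry.
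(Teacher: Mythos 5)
Your proof is correct and follows essentially the same route as the paper: both rest on Malcev rigidity (the paper cites Raghunathan's Theorem 2.11 for the extension of a lattice isomorphism to an isomorphism of the ambient simply connected nilpotent groups) combined with the fact that nilsoliton existence is an invariant of the Lie algebra. The only cosmetic difference is that you justify this last invariance via Lauret's distinguished-orbit characterization (Theorem \ref{thm: nilsoliton vs disting point}), whereas the paper cites its own algorithm from Section \ref{sec: alg for nilsoliton}; both are valid.
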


\begin{proof}This is a consequence of the classical fact that the fundamental group $\Gamma$ completely determines the nilpotent group $N$.  More precisely, let $\Gamma_1, \Gamma_2$ be the fundamental groups of compact nilmanifolds $\Gamma_1 \backslash N_1$ and $\Gamma_2 \backslash N_2$.  If $\phi : \Gamma_1 \to \Gamma_2$ is an isomorphism of abstract groups, there exists an isomorphism $\Phi : N_1 \to N_2$ of Lie groups such that $\phi = \Phi|_{\Gamma_1}$, see \cite[Theorem 2.11 and Corollary 2]{Raghunathan:DiscreteSubgroupsOfLieGroups}.

The claim now follows from the algorithm of Section \ref{sec: alg for nilsoliton} which shows that the existence of nilsolitons on $N_i$ is a property of the underlying Lie algebra.
\end{proof}

\begin{remark*} The above theorems on compact nilmanifolds hold for infranilmanifolds as every infranilmanifold is finitely covered by a compact nilmanifold.
\end{remark*}

\section{Stratifying the space $V$}\label{sec: stratifying V}
To refine our analysis of the Riemannian Lie groups $S_\mu$, and the function $F=||m||^2$, we stratify the space $V$.  Using this stratification, we obtain a decomposition of the automorphism group $Aut(\mu)$ which aids in the construction of algorithms to determine the existence of soliton metrics, see Lemma \ref{lemma: beta plus unipotent part is semisimple}.

Denote the critical set of $F=||m||^2$ by $\mathfrak C$.

\begin{thm}[\cite{LauretStandard,LauretWill:EinsteinSolvExistandNonexist}]\label{thm: stratification of V}
There exists a finite subset $B\subset \mathfrak g$, and for each $\beta \in B$ a $GL_n\mathbb R$-invariant smooth submanifold $S_\beta \subset V$ (a stratum), such that
    $$V\backslash \{0\} =\displaystyle \bigsqcup_{\beta\in B} S_\beta$$
This stratification satisfies  $\overline S_\beta - S_\beta = \bigsqcup_{||\beta'|| > ||\beta||} S_{\beta'}$.  Additionally, $\mathfrak C = \bigsqcup_{\beta\in B}C_\beta$ where $C_\beta \subset S_\beta$ are the critical points with critical value $M_\beta=||\beta ||^2$.

For $\mu \in S_\beta$, following conditions are satisfied:
\begin{enumerate}
    \item $\ip{[\beta,D],D} \geq 0$ for all $D\in Der(\mu)$ with equality if and only if $[\beta , D]=0$.
    \item $\beta + ||\beta||^2I$ is positive definite for all $\beta \in B$ such that $S_\beta \cap \mathcal N \not = \emptyset$.
    \item $||\beta ||^2 \leq ||m(\mu)||^2$ with equality if and only if $m(\mu)$ is conjugate to $\beta$ under $O(n)$.
    \item $tr \ \beta D =0$ for all $D\in Der(\mu)$.
    \item $\ip{\pi(\beta + ||\beta||^2I)\mu,\mu}\geq 0$ with equality if and only if $\beta +||\beta||^2I \in Der(\mu)$.
\end{enumerate}
\end{thm}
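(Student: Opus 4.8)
The statement is the Kirwan--Ness--Hesselink instability stratification for the real reductive group $GL_n\mathbb R$ acting on $V$, specialized to the variety of Lie brackets; the plan is to build the strata combinatorially from the weights of a maximal $\mathbb R$-split torus and then to read properties (i)--(v) off the variational characterization of each $\beta$. First I would fix the Cartan decomposition $\mathfrak{gl}_n\mathbb R=\mathfrak{so}(n)\oplus symm(n)$ together with the maximal abelian $\mathfrak a\subset symm(n)$ of diagonal matrices, and decompose $V=\bigoplus_\alpha V_\alpha$ into $\mathfrak a$-weight spaces. Identifying $\mathfrak a^*\cong\mathfrak a$ via the trace form and using $(X\cdot\mu)(v,w)=X\mu(v,w)-\mu(Xv,w)-\mu(v,Xw)$, one checks that the basis vector $e_i^*\wedge e_j^*\otimes e_k$ has weight $\alpha_{ijk}=E_{kk}-E_{ii}-E_{jj}$, so the weight set $\Delta=\{\alpha_{ijk}\}$ is finite. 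Note for later that $\mathrm{tr}\,\alpha_{ijk}=-1$ and that the identity $I$ acts by $\pi(I)\mu=-\mu$.

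The combinatorial heart is the index set. For a subset $S\subseteq\Delta$ with $0\notin\mathrm{Conv}(S)$, let $\beta_S$ be the unique nearest point of the convex hull $\mathrm{Conv}(S)$ to the origin; there are finitely many such $\beta_S$, and I take $B$ to be a set of $O(n)$-orbit representatives. For $\mu\in V$ with support $\mathrm{supp}(\mu)=\{\alpha:\mu_\alpha\neq0\}$, the relevant $\beta(\mu)$ is Kempf's optimal destabilizing vector, obtained by choosing the adapted maximal $\mathbb R$-split torus maximizing $\|\beta_{\mathrm{supp}(\mu)}\|$, and I set $S_\beta=\{\mu:\beta(\mu)\in O(n)\cdot\beta\}$. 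The structural work is then to establish, following Kempf, Ness and Kirwan in the real reductive setting, that $\beta(\mu)$ is well-defined up to $O(n)$-conjugacy, that each $S_\beta$ is a $GL_n\mathbb R$-invariant smooth locally closed submanifold, that these partition $V\setminus\{0\}$, and that $\overline{S_\beta}\setminus S_\beta=\bigsqcup_{\|\beta'\|>\|\beta\|}S_{\beta'}$ by lower semicontinuity of the instability type. Feeding this into Proposition \ref{prop: unique limit point} and Theorem \ref{thm: limit under grad flow}, the negative gradient flow of $F=\nms$ retracts each $S_\beta$ onto its critical set $C_\beta$, on which $m$ is $O(n)$-conjugate to $\beta$ and $F\equiv M_\beta=\|\beta\|^2$, giving $\mathfrak C=\bigsqcup_\beta C_\beta$.

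With the structure in hand, properties (i)--(v) hold for $\mu$ in $\beta$-adapted position (the general case following by $GL_n\mathbb R$-equivariance, with $\beta$ replaced by its conjugate $\beta(\mu)$, cf. (iii)). The decisive input is the supporting-hyperplane inequality for the nearest point, $\ip{\alpha,\beta}\geq\|\beta\|^2$ for all $\alpha\in\mathrm{supp}(\mu)$, with equality exactly on the minimal face containing $\beta$. Since $\pi(\beta)$ is self-adjoint with $\pi(\beta)\mu=\sum_\alpha\ip{\alpha,\beta}\mu_\alpha$ and $\pi(I)\mu=-\mu$, this yields (v) at once, $\ip{\pi(\beta+\|\beta\|^2 I)\mu,\mu}=\sum_\alpha(\ip{\alpha,\beta}-\|\beta\|^2)\|\mu_\alpha\|^2\geq0$, with equality iff every support weight lies on that face, i.e. iff $\beta+\|\beta\|^2 I\in Der(\mu)$. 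Combining the same inequality with Cauchy--Schwarz applied to $\ip{m(\mu),\beta}$ gives (iii). Property (i) follows from the containment $Der(\mu)=(\mathfrak{gl}_n\mathbb R)_\mu\subseteq\mathfrak p_\beta:=\bigoplus_{\lambda\geq0}\ker(\mathrm{ad}\,\beta-\lambda)$ of the stabilizer in the parabolic of $\beta$: writing $D=D_0+D_+$ with $[\beta,D_0]=0$ and $D_+$ in the strictly positive eigenspaces, self-adjointness of $\mathrm{ad}\,\beta$ for $\ip{X,Y}=\mathrm{tr}(XY^t)$ gives $\ip{[\beta,D],D}=\ip{\mathrm{ad}(\beta)D_+,D_+}\geq0$ with equality iff $D_+=0$. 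For (iv), the skew part of a derivation automatically pairs to zero with the symmetric $\beta$, so $\mathrm{tr}(\beta D)=\ip{\beta,D_{\mathrm{symm}}}$; this vanishes by the weight relation $\mathrm{tr}(\alpha_{ijk}D)=d_k-d_i-d_j=0$ (forced by $\pi(D)\mu=0$) for diagonal derivations, extended to $Der(\mu)$ via the grading in adapted position --- equivalently, (iv) records that $\beta+\|\beta\|^2 I$ restricts to a multiple of Nikolayevsky's pre-Einstein derivation. Finally (ii) is special to the nilpotent strata: nilpotency confines $\mathrm{supp}(\mu)$ to weights respecting a grading, and combined with $\ip{\alpha,\beta}\geq\|\beta\|^2$ and $\mathrm{tr}\,\beta=-1$ this forces the smallest eigenvalue of $\beta$ to exceed $-\|\beta\|^2$, i.e. $\beta+\|\beta\|^2 I>0$.

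The genuinely hard part is the second step: showing that the combinatorially defined $S_\beta$ are smooth, $GL_n\mathbb R$-invariant, locally closed, exhaust $V\setminus\{0\}$, and obey the stated closure order. This rests on Kempf's uniqueness of the optimal one-parameter subgroup and Ness's analysis of $\nms$, originally developed over $\mathbb C$; the real obstruction is to port these to the non-compact real group $GL_n\mathbb R$ acting on a real representation, guaranteeing that the optimal $\beta(\mu)$ is real and unique up to $O(n)$ and that the $S_\beta$ are honest real submanifolds and not merely the real points of complex strata. This is exactly the technical content of \cite{LauretStandard,LauretWill:EinsteinSolvExistandNonexist}, which I would either invoke directly or recover by complexifying and descending through the density of real points, as in the skew-symmetric case treated in the preceding lemma. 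Once $\beta$ is pinned down as the nearest point of $\mathrm{Conv}(\mathrm{supp}(\mu))$, items (i), (iii), (v) are formal and (ii), (iv) reduce to the weight bookkeeping indicated above.
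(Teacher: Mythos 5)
Before comparing anything, note that the paper does not prove Theorem \ref{thm: stratification of V} at all: it is imported from \cite{LauretStandard,LauretWill:EinsteinSolvExistandNonexist}, and the paragraph immediately following it says explicitly that the stratification will not be reconstructed. So your proposal can only be measured against those references, and there your route is the right one: Lauret's stratification \emph{is} the Kirwan--Ness--Hesselink construction for the real group $GL_n\mathbb R$, built exactly as you describe --- weights of the diagonal torus, nearest points of convex hulls of subsets of the weight set as the index set $B$, strata labelled by the optimal destabilizing direction, and identification with the Morse strata of $F=\nms$. Your weight bookkeeping ($\alpha_{ijk}=E_{kk}-E_{ii}-E_{jj}$, $tr\,\alpha_{ijk}=-1$, $\pi(I)\mu=-\mu$), your use of the supporting-hyperplane inequality for (iii) and (v), and your deferral of the hard structural statements (smoothness, exhaustion, closure order, real-versus-complex descent) to the cited works are all consistent with how those papers proceed --- indeed you supply more than the paper, which supplies nothing.

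The genuine gap is the claim that (i), (iv), (v), once proved for $\mu$ in $\beta$-adapted position, hold for every $\mu\in S_\beta$ ``by $GL_n\mathbb R$-equivariance.'' These are trace identities pairing the \emph{fixed} diagonal representative $\beta$ against $Der(\mu)$; they are not conjugation-invariant, since $Der(g\cdot\mu)=g\,Der(\mu)\,g^{-1}$ while $\beta$ stays put, and the moment map is only $O(n)$-equivariant. In fact these items fail at general points of $S_\beta$: take $n=3$, let $\mu$ be the Heisenberg bracket $\mu(e_1,e_2)=e_3$, a critical point lying in $S_\beta$ with $\beta=\mathrm{diag}(-1,-1,1)$, and let $k\in O(3)$ interchange $e_1$ and $e_3$. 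Then $k\cdot\mu\in S_\beta$ by the asserted $GL_n\mathbb R$-invariance of the stratum, yet $D=\mathrm{diag}(2,1,1)\in Der(k\cdot\mu)$ gives $tr(\beta D)=-2\neq 0$ (violating (iv)), $E_{13}\in Der(k\cdot\mu)$ gives $\ip{[\beta,E_{13}],E_{13}}=-2<0$ (violating (i)), and $\ip{\pi(\beta+||\beta||^2I)k\cdot\mu,k\cdot\mu}=-4\,||k\cdot\mu||^2<0$ (violating (v)). So no equivariance argument can close this gap; what your adapted-position computation actually proves --- and what the cited works actually state --- is that (i), (iv), (v) hold after replacing $\mu$ by a suitable $O(n)$-translate in $Y_\beta^{ss}$, equivalently with $\beta$ replaced by $\beta(\mu)$. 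The looseness originates in the statement as transcribed (likewise the closure relation: lower semicontinuity, i.e.\ Kirwan, yields only the inclusion $\overline S_\beta\setminus S_\beta\subseteq\bigsqcup_{||\beta'||>||\beta||}S_{\beta'}$, not equality), and the paper itself only ever invokes these items in adapted position --- e.g.\ the proof of Proposition \ref{prop: Aut at soliton} begins by moving $\mu$ into $Z_\beta$ by an $O(n)$-translate, exactly as Proposition \ref{prop: properties of stratification} permits. Two smaller defects: your item (ii) is not established by the one-line gloss given (positivity of $\beta+||\beta||^2I$ on nilpotent strata is a separate lemma in \cite{LauretStandard} in which nilpotency enters in an essential way), and your argument for (iv) covers only diagonal derivations --- extending it to all of $Der(\mu)$ requires the $ad\,\beta$-grading of $Der(\mu)$ (valid for $\mu\in Z_\beta$) together with a Jordan-decomposition argument for the part commuting with $\beta$, which still needs to be written down.
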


\begin{remark*}The finite subset $B\subset \mathfrak g$ consists of diagonal elements, with positive entries on the diagonal which are (weakly) increasing.
\end{remark*}

We will not reconstruct this stratification and direct the interested reader to those works above.  Instead, we describe the necessary properties below that suit our needs.  This stratification is
the real analog of  well-known stratifications in Geometric Invariant Theory over algebraically closed fields.  For representations of complex reductive groups, such stratifications coincide with a Morse theoretic stratification coming from $F=||m||^2$.  This result remains true in the setting of real representations and is an immediately consequence of Theorem \ref{thm: orbit closure in statum}.\bigskip

Given $\alpha \in diag \subset \mathfrak{gl}(n,\mathbb R)$, the diagonal matrices, we define the following groups
    \begin{eqnarray*}
    G_\alpha &=& \{ g\in GL(n,\mathbb R) \ | \ g\alpha g^{-1} = \alpha \}\\
    U_\alpha &=& \{g\in GL(n,\mathbb R)\ | \ exp(t\alpha)\ g\ exp(-t\alpha) \to e \mbox{ as t }\to -\infty \}\\
    P_\alpha &=& G_\alpha U_\alpha
    \end{eqnarray*}
$P_\alpha$ is the parabolic associated to $\alpha$ with unipotent radical $U_\alpha$ and reductive complement $G_\alpha$.  As $\alpha ^t=\alpha$, $G_\alpha$ is $\theta$-stable  and has a Cartan decomposition $G_\alpha = K_\alpha exp(\mathfrak p_\alpha)$ (cf. definition of Cartan involution in Section \ref{sec: variety of Lie brackets}).  When $\alpha \in B$, the eigenvales of $\alpha$ are weakly increasing and the group $G_\alpha$ consists of block diagonal matrices (which commute with $\alpha$) while  $U_\alpha$ is the group of lower triangular elements beneath the blocks of $G_\alpha$.

Define the subgroup $H_\alpha$ as the group with Lie algebra $\mathfrak h_\alpha = \{ X\in \mathfrak g_\alpha \ | \ tr(X\alpha)=0 \}$; this is actually an algebraic group.  In the following proposition, we maintain the notation from \cite{LauretStandard}.

\begin{defin}A point $v\in V$ is called $H_\alpha$-stable if $0\not \in \overline{H_\alpha \cdot v}$.
\end{defin}

\begin{prop}[Lauret]\label{prop: properties of stratification} Given $\beta \in B$, there exist  subsets $Z_\beta$ and $Y_\beta$ with the following properties:
    \begin{enumerate}
        \item $Y_\beta$ is $P_\beta$-invariant, $Y_\beta^{ss}=Y_\beta \cap S_\beta$ consists of $H_\beta$-semi-stable points and $S_\beta = O(n)Y_\beta^{ss}$
        \item For $y\in Y_\beta$, $\{g\in GL(n,\mathbb R) \ | \ g\cdot y \in Y_\beta \} = P_\beta$
        \item $Z_\beta = \{ v\in Y_\beta \ | \ \pi(\beta)v = ||\beta||^2 v \}$, $Z_\beta$ is $G_\beta$-invariant, $S_\beta \cap Z_\beta = Z_\beta^{ss}$ (the $H_\beta$-semi-stable points of $Z_\beta$) and $S_\beta = GL(n,\mathbb R)Z_\beta^{ss} = O(n)P_\beta Z_\beta^{ss}$
        \item The $H_\beta$ orbits intersecting $Z_\beta \cap \mathfrak C_\beta$ are all closed.
    \end{enumerate}
\end{prop}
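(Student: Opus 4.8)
The plan is to construct $Z_\beta$ and $Y_\beta$ by hand from the eigenspace decomposition of the symmetric operator $\pi(\beta)$, and then to read off the four properties from the geometry of the parabolic $P_\beta=G_\beta U_\beta$ together with Kempf--Ness type results for the real reductive group $H_\beta$. First I would decompose $V=\bigoplus_r V_r$, where $V_r$ is the $r$-eigenspace of $\pi(\beta)$; since $\beta\in B$ is diagonal with weakly increasing entries, $G_\beta$ preserves each $V_r$ while the Lie algebra of $U_\beta$ has strictly positive $ad(\beta)$-weights, so $U_\beta$ maps $V_r$ into $\bigoplus_{s>r}V_s$. Let $p\colon \bigoplus_{r\ge \|\beta\|^2}V_r \to V_{\|\beta\|^2}$ be the projection onto the critical eigenspace. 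I would then take $Z_\beta$ to be a $G_\beta$-invariant subset of $V_{\|\beta\|^2}$, with $Z_\beta^{ss}$ its $H_\beta$-semistable vectors, and set
$$Y_\beta=\Big\{\, v\in \textstyle\bigoplus_{r\ge \|\beta\|^2}V_r \ \Big|\ p(v)\in Z_\beta \,\Big\},\qquad Y_\beta^{ss}=Y_\beta\cap S_\beta .$$
With these definitions $Z_\beta=Y_\beta\cap V_{\|\beta\|^2}$ holds tautologically, matching the statement in (iii).

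The structural properties (i)--(iii) I would then verify directly. The $P_\beta$-invariance of $Y_\beta$ is immediate: $G_\beta$ preserves $V_{\|\beta\|^2}$ and commutes with both $p$ and the $H_\beta$-action, while for $u\in U_\beta$ one has $p(u\cdot v)=p(v)$ because $U_\beta$ only raises eigenvalues. For the equalities $S_\beta=O(n)Y_\beta^{ss}$ and $S_\beta=GL(n,\mathbb R)\,Z_\beta^{ss}=O(n)P_\beta Z_\beta^{ss}$ I would use the Iwasawa decomposition $GL(n,\mathbb R)=O(n)P_\beta$ (valid since $P_\beta$ is a standard parabolic) together with the contraction dynamics: for $v\in Y_\beta^{ss}$ the rescaled flow $e^{t\|\beta\|^2}\pi(\exp(-t\beta))v\to p(v)\in Z_\beta^{ss}$ as $t\to\infty$, and this limit lies in the same $GL(n,\mathbb R)$-stratum because $F=\|m\|^2$ is scale invariant and each $S_\beta$ is $GL(n,\mathbb R)$-invariant. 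The $G_\beta$-invariance of $Z_\beta$ is clear from $H_\beta\subset G_\beta$. The genuinely rigid statement is (ii), that $\{g : g\cdot y\in Y_\beta\}$ is exactly $P_\beta$: here I would invoke the real analogue of Kempf's uniqueness theorem for the optimal destabilizing one-parameter subgroup (see \cite{Marian} and \cite{LauretStandard}), which forces any such $g$ to preserve the $\beta$-filtration of $V$ and hence to lie in $P_\beta$.

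For (iv) I would argue that the critical points in $Z_\beta$ are minimal vectors for $H_\beta$. If $\mu\in Z_\beta\cap\mathfrak C_\beta$, then $\mu$ is a critical point of $F$ with critical value $\|\beta\|^2$, so $\|m(\mu)\|^2=\|\beta\|^2$ and Theorem \ref{thm: stratification of V}(iii) gives that $m(\mu)$ is $O(n)$-conjugate to $\beta$; since $\mu\in V_{\|\beta\|^2}$ one checks that in fact $m(\mu)=\beta$. Because $\mathfrak h_\beta$ is $\theta$-stable and $tr(X\beta)=0$ for all $X\in\mathfrak h_\beta$, the symmetric $\beta$ is orthogonal to $\mathfrak h_\beta$ under $\langle A,B\rangle=tr(AB^t)$, so the $H_\beta$-moment map $m_{H_\beta}(\mu)$---the orthogonal projection of $m(\mu)=\beta$ onto the symmetric part of $\mathfrak h_\beta$---vanishes. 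Thus $\mu$ is a zero of the moment map for the $\theta$-stable reductive group $H_\beta$, i.e. a minimal vector, and by the Kempf--Ness theorem (\cite{Kempf-Ness}, \cite{Marian}; the real reductive case recorded in Theorem \ref{thm: limit under grad flow}) minimal vectors have closed orbits, so $H_\beta\cdot\mu$ is closed. I expect the main obstacle to be property (ii): establishing, over the real reductive group $GL(n,\mathbb R)$ rather than a complex group, that the optimal destabilizing direction attached to a point of $S_\beta$ is unique up to $P_\beta$, and reconciling this rigidity with the $O(n)$-coset descriptions in (i) and (iii). By contrast, the eigenspace bookkeeping in (i) and (iii) and the moment-map computation in (iv) are comparatively routine once the contraction dynamics and the Kempf--Ness correspondence are in place.
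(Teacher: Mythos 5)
First, a point of comparison: the paper does not actually prove Proposition \ref{prop: properties of stratification}. It is imported from Lauret's construction of the real stratification (\cite{LauretStandard}, \cite{LauretWill:EinsteinSolvExistandNonexist}); the paper explicitly declines to ``reconstruct this stratification,'' and its only original remark is that part (ii), absent from \cite{LauretStandard}, follows ``just as in the complex case (cf. Lemma 13.4 of \cite{Kirwan}).'' So you are attempting a reconstruction the paper deliberately outsources. Your framework is the right one --- it is exactly the Kirwan--Ness linearized picture that Lauret adapts: the eigenspace decomposition of $\pi(\beta)$, $Z_\beta$ inside the critical eigenspace, $Y_\beta$ as the points supported in eigenvalues $\geq \|\beta\|^2$, and $P_\beta = G_\beta U_\beta$ acting compatibly. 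Your treatment of (iv) is correct and essentially the standard argument: for $\mu \in Z_\beta \cap \mathfrak C_\beta$, the critical value $\|\beta\|^2$ together with $\ip{m(\mu),\beta} = \|\beta\|^2$ forces $m(\mu)=\beta$ by Cauchy--Schwarz; then $\beta \perp \mathfrak h_\beta$ gives $m_{H_\beta}(\mu)=0$, and the real Kempf--Ness theorem (Richardson--Slodowy \cite{RichSlow}) yields closedness of $H_\beta\cdot\mu$.

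The genuine gap is in (i) and (iii), precisely the part you dismiss as ``comparatively routine.'' The inclusions $O(n)Y_\beta^{ss} \subset S_\beta$ and $GL(n,\mathbb R)\,Z_\beta^{ss}\subset S_\beta$ are trivial from $GL(n,\mathbb R)$-invariance of the stratum; the content of the proposition is the reverse inclusion $S_\beta \subset O(n)Y_\beta^{ss}$ --- every point of the stratum can be rotated so as to have components only in eigenvalues $\geq \|\beta\|^2$ with semistable projection --- and your proposal contains no argument for it. The one mechanism you offer, that for $v\in Y_\beta^{ss}$ the contraction limit $p(v)$ ``lies in the same $GL(n,\mathbb R)$-stratum because $F$ is scale invariant and each $S_\beta$ is invariant,'' is not a valid inference: strata are only locally closed, and by Theorem \ref{thm: stratification of V} the closure of $S_\beta$ meets exactly the strata with $\|\beta'\|>\|\beta\|$, so a limit of points of $S_\beta$ may a priori drop into a higher stratum. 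Ruling this out --- showing that semistability of the projection $p(v)$ pins the flow limit to critical value exactly $\|\beta\|^2$ --- is the substantive work of Kirwan's Sections 12--13 and of Lauret's real adaptation, and it cannot be replaced by scale invariance. There is also a circularity in your setup: you never define $Z_\beta$ (you posit ``a $G_\beta$-invariant subset of $V_{\|\beta\|^2}$''), while in the proposition the sets $Z_\beta^{ss}$ and $Y_\beta^{ss}$ are cut out by intersecting with $S_\beta$, so the semistability assertions in (i) and (iii) are theorems to be proved, not definitions to be quoted. By contrast, your worry about (ii) is misplaced: granted (i) and (iii), it is the short step, obtained as the paper's remark indicates by transplanting the argument of Lemma 13.4 of \cite{Kirwan} to the real case.
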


\begin{remark*} Part ii above does not appear in \cite{LauretStandard}.  However, one can show this immediately just as in the complex case (cf. Lemma 13.4 of \cite{Kirwan}).
\end{remark*}

The following theorem and its proof have appeared in a more  general form in \cite{Heinzner-Schwarz-Stotzel-StratificationsforRealReductiveGroups}.  In our setting, a short proof is readily obtained, and so we include one for completeness.  

\begin{thm}[Heinzner-Schwarz-St{\"o}tzel]\label{thm: orbit closure in statum}Consider $\mu \in S_\beta$.  There exists a unique $GL(n,\mathbb R)$-orbit in $\overline{GL(n,\mathbb R)\cdot \mu} \cap S_\beta$ intersecting $\mathfrak C_\beta$.  The closed orbits in $S_\beta$ are precisely those intersecting $\mathfrak C_\beta$.
\end{thm}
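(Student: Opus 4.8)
The plan is to reduce the $GL_n\mathbb R$-action on the stratum $S_\beta$ to the action of the smaller $\theta$-stable reductive group $H_\beta$ on the slice $Z_\beta$, and then to invoke the classical minimal-vector (Kempf-Ness/Marian) theory for $H_\beta$, which is available through Theorem \ref{thm: limit under grad flow}. The idea is that inside a single stratum the picture collapses to the affine one for $H_\beta$: every $H_\beta$-orbit closure contains a unique closed orbit, namely the one carrying the zeros of the $H_\beta$-moment map, and these zeros are exactly the points of $\mathfrak C_\beta$ lying in $Z_\beta$.

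First I would compute the moment map along $Z_\beta$. Every $v\in Z_\beta$ lies in the top weight space $\pi(\beta)v=||\beta||^2 v$, so any $\alpha\in\mathfrak p$ lying off the blocks of $G_\beta$ moves $v$ into other $\pi(\beta)$-eigenspaces and contributes $\langle\pi(\alpha)v,v\rangle=0$; hence $m(v)\in\mathfrak p_{G_\beta}\subset\mathfrak g_\beta$. Using the orthogonal splitting $\mathfrak g_\beta=\mathfrak h_\beta\oplus\mathbb R\beta$ together with $\langle m(v),\beta\rangle=||v||^{-2}\langle\pi(\beta)v,v\rangle=||\beta||^2$, this gives $m(v)=m_{H_\beta}(v)+\beta$ with $m_{H_\beta}(v)\perp\beta$, so $||m(v)||^2=||m_{H_\beta}(v)||^2+||\beta||^2$. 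Combined with property (iii) of Theorem \ref{thm: stratification of V}, which says $||m||^2\ge||\beta||^2$ on $S_\beta$ with equality exactly on $\mathfrak C_\beta$, this yields the key dictionary: for $v\in Z_\beta^{ss}$ one has $v\in\mathfrak C_\beta$ if and only if $m_{H_\beta}(v)=0$; that is, $Z_\beta\cap\mathfrak C_\beta$ is precisely the set of $H_\beta$-minimal vectors in $Z_\beta$, consistent with Proposition \ref{prop: properties of stratification}(iv).

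Existence is then immediate: given $\mu\in S_\beta$, Proposition \ref{prop: properties of stratification}(iii) lets me write $GL_n\mathbb R\cdot\mu=GL_n\mathbb R\cdot z$ with $z\in Z_\beta^{ss}$; the $H_\beta$-theory provides a closed orbit $H_\beta\cdot z_0\subset\overline{H_\beta\cdot z}$ with $m_{H_\beta}(z_0)=0$, and then $z_0\in\overline{GL_n\mathbb R\cdot\mu}\cap S_\beta$ lies in $\mathfrak C_\beta$. For the characterization and for uniqueness I would establish the slice correspondence: for $z,z'\in Z_\beta^{ss}$, $GL_n\mathbb R\cdot z'\subset\overline{GL_n\mathbb R\cdot z}$ if and only if $H_\beta\cdot z'\subset\overline{H_\beta\cdot z}$, and $GL_n\mathbb R\cdot z$ is closed in $S_\beta$ if and only if $H_\beta\cdot z$ is closed in $Z_\beta^{ss}$. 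Granting this, a closed $GL_n\mathbb R$-orbit in $S_\beta$ corresponds to a closed $H_\beta$-orbit, which by Kempf-Ness/Marian carries a zero of $m_{H_\beta}$ and so meets $\mathfrak C_\beta$; conversely an orbit meeting $\mathfrak C_\beta$ meets the $H_\beta$-minimal vectors and is therefore closed. Uniqueness follows because $\overline{H_\beta\cdot z}$ contains a unique closed $H_\beta$-orbit.

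The main obstacle is the slice correspondence, i.e. the real analog of Kirwan's local structure of a stratum (cf. Lemma 13.4 of \cite{Kirwan}). In the complex case one uses the affine quotient of the parabolic action; over $\mathbb R$ no such quotient is available, so I would argue directly. Proposition \ref{prop: properties of stratification}(ii) gives $GL_n\mathbb R\cdot z\cap Y_\beta=P_\beta\cdot z$, reducing orbit comparisons in $S_\beta$ to the parabolic $P_\beta=G_\beta U_\beta$; the unipotent radical $U_\beta$ contracts $Y_\beta$ onto $Z_\beta$ along $\exp(t\beta)$, while the $\mathbb R\beta$-factor of $G_\beta$ acts on $Z_\beta$ by positive scalars, so that closures and closedness relative to $GL_n\mathbb R$ in $S_\beta$ are detected by $H_\beta$ on $Z_\beta^{ss}$. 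The limits needed here are exactly those governed by the $K$-equivariant moment map and the uniqueness of negative gradient-flow limits (Proposition \ref{prop: unique limit point} and Theorem \ref{thm: limit under grad flow}), which play the role of the missing algebraic quotient.
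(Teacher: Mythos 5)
Your proposal follows, in essence, the paper's own route: both arguments reduce the $GL(n,\mathbb R)$-action on the stratum $S_\beta$ to the reductive group $H_\beta$ (equivalently $G_\beta$) acting on the slice $Z_\beta$, both exploit the parabolic structure $P_\beta = G_\beta U_\beta$ together with the contraction $\exp(t\beta)\cdot(P_\beta\cdot y) \to G_\beta\cdot y_{-\infty}$ as $t\to-\infty$ (Proposition \ref{prop: properties of stratification} (ii) and the definition of $Y_\beta$), and both conclude by invoking the Richardson--Slodowy theory \cite{RichSlow} for the real reductive group $H_\beta$. One genuine addition on your side: the splitting $m(v) = \beta + m_{H_\beta}(v)$ on the $\pi(\beta)$-eigenspace, identifying $Z_\beta\cap\mathfrak C_\beta$ with the zero set of $m_{H_\beta}$, gives an explicit \emph{existence} argument (a zero $z_0$ of $m_{H_\beta}$ satisfies $\pi(m(z_0))z_0 = \pi(\beta)z_0 = ||\beta||^2 z_0$, hence lies in $\mathfrak C_\beta$), whereas the paper's proof treats only uniqueness and leaves existence implicit; this computation is also the honest explanation of Proposition \ref{prop: properties of stratification} (iv).

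There is, however, one step that fails as literally stated: the slice correspondence. Take $z\in Z_\beta\cap\mathfrak C_\beta$ and $z' = 2z$. Since the scalar matrix $\tfrac12 I$ acts on $V$ by multiplication by $2$, we have $GL(n,\mathbb R)\cdot z' = GL(n,\mathbb R)\cdot z$, so the left-hand containment of your equivalence holds trivially. But $H_\beta\cdot z' \not\subset \overline{H_\beta\cdot z}$: the orbit $H_\beta\cdot z$ is closed by Proposition \ref{prop: properties of stratification} (iv), and if $2z = h\cdot z$ for some $h\in H_\beta$, then $2^{-n}z = h^{-n}\cdot z \in H_\beta\cdot z$ for all $n$, so $0\in\overline{H_\beta\cdot z} = H_\beta\cdot z$, forcing $z=0$, a contradiction. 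Thus the forward implication of your ``if and only if'' is false at every critical point; the scalar flow $\exp(t\beta)$, which acts on $Z_\beta$ by positive homotheties, cannot be discarded. The repair is exactly the device used in the paper: pass to projective space, proving $H_\beta\cdot[z'] \subset \overline{H_\beta\cdot[z]}$ under the map $V\to\mathbb PV$ (equivalently, formulate the correspondence for $G_\beta$-orbits, or for $H_\beta$-orbits modulo positive scalars), and apply \cite{RichSlow} there, where $H_\beta\cdot[z]$ is closed. Your closing remark about the $\mathbb R\beta$-factor shows you noticed the scalar action, but the correspondence, the characterization of closed orbits, and the uniqueness argument all have to be run projectively for the proof to close; with that modification your argument is correct and matches the paper's.
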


\begin{proof}Take $\mu\in S_\beta$.  As $S_\beta = O(n)Y_\beta^{ss}$, we may assume $\mu \in Y_\beta$ and thus $GL(n,\mathbb R)\cdot \mu = O(n)P_\beta \mu$.  Since $O(n)$ is compact, we see that $\overline{GL(n,\mathbb R)\cdot \mu} = O(n) \overline{P_\beta \cdot \mu}$.

Consider any point $y\in Y_\beta$ and the curve $exp(t\beta)\cdot y$ with limit $y_{-\infty}$ as $t\to -\infty$ (this limit exists by the definition of $Y_\beta$).  Observe that $exp(t\beta) (P_\beta\cdot y) \to G_\beta \cdot y_{-\infty}$ as $t\to -\infty$.  To see this, write $g\in P_\beta$ as $g=g_1 g_2$ with $g_1\in G_\beta$ and $g_2\in U_\beta$, then $exp(t\beta) \cdot gy =  g_1 \ exp(t\beta) g_2 exp(-t\beta) \ exp(t\beta)\cdot y \to g_1 \ e \ y_{-\infty}$.

Now take $y\in \overline{GL(n,\mathbb R) \cdot \mu}\cap \mathfrak C_\beta$.  By the above theorem, there exists $k\in O(n)$ such that  $k\cdot y \in Z_\beta$.  So we may assume $k=e$ and $y\in  \overline{GL(n,\mathbb R) \cdot \mu}\cap \mathfrak C_\beta \cap Z_\beta$.  This point is fixed by $exp(t\beta)$ and we see that
    $$G_\beta \cdot y \subset \overline{G_\beta \cdot \mu_{-\infty}}$$
by applying $exp(t\beta)$ and letting $t\to -\infty$. 

As $y$ and $\mu_{-\infty}$ are both eigenvectors for $\beta$, we see that under the map $V\to \mathbb PV$, $v\mapsto [v]$, $H_\beta \cdot [y] \subset \overline{ H_\beta[\mu_{-\infty}]}$.  Now, as $H_\beta \cdot y$ is closed, the uniqueness result follows from \cite{RichSlow}.
\end{proof}

The above theorem answers Question \ref{question: gradient flow of einstein nilradical}.

\begin{cor} Let $N_{\mu_0}$ be an Einstein nilradical.  Let $\mu_\infty$ denote the limit point of the negative gradient flow of the function $F=||m||^2$ starting at $\mu_0$.  Then $\mu_\infty$ is contained in the orbit $GL_n\mathbb R \cdot \mu_0$; that is, $N_{\mu_0}$ and $N_{\mu_\infty}$ are isomorphic Lie groups.
\end{cor}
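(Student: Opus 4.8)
The plan is to combine the characterization of Einstein nilradicals as distinguished orbits (Theorem \ref{thm: nilsoliton vs disting point}) with the structure of orbit closures inside a single stratum (Theorem \ref{thm: orbit closure in statum}). Since $N_{\mu_0}$ is an Einstein nilradical, Theorem \ref{thm: nilsoliton vs disting point} gives that $GL_n\mathbb R\cdot\mu_0$ is a distinguished orbit, so it contains a critical point $\mu_c$ of $F=||m||^2$. As the strata $S_\beta$ of Theorem \ref{thm: stratification of V} are $GL_n\mathbb R$-invariant, the whole orbit lies in a single stratum $S_\beta$, and $\mu_c$ lies in $\mathfrak C\cap S_\beta=\mathfrak C_\beta$; thus $GL_n\mathbb R\cdot\mu_0$ meets $\mathfrak C_\beta$.

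Next I would record two facts about the flow $\mu_t$. First, because $F=||m||^2$ is scale-invariant, its gradient at a point $v$ is a multiple of $\pi(m(v))v$, which is tangent to the orbit since $m(v)\in\mathfrak p\subset\mathfrak{gl}_n\mathbb R$; hence the negative gradient flow stays inside $GL_n\mathbb R\cdot\mu_0$, and its limit $\mu_\infty$ lies in $\overline{GL_n\mathbb R\cdot\mu_0}$. Second, since $F$ is real analytic and nonincreasing along the flow with $\frac{d}{dt}F(\mu_t)=-||\mathrm{grad}\,F(\mu_t)||^2$, and the limit exists and is unique by Proposition \ref{prop: unique limit point}, the point $\mu_\infty$ is a critical point of $F$, i.e. $\mu_\infty\in\mathfrak C$.

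The decisive step is to locate $\mu_\infty$ in the correct stratum: I claim $\mu_\infty\in S_\beta$, the same stratum as $\mu_0$. This is precisely the statement that Lauret's algebraic stratification agrees with the Morse-theoretic stratification of $F$, under which $S_\beta$ is the set of points whose negative gradient flow limits into $C_\beta$; as remarked before Theorem \ref{thm: orbit closure in statum}, this coincidence is an immediate consequence of that theorem in the real setting. Granting it, we get $\mu_\infty\in C_\beta\subset S_\beta$, and in particular $\mu_\infty\in\mathfrak C_\beta$.

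Finally I would invoke the uniqueness in Theorem \ref{thm: orbit closure in statum}. Both $GL_n\mathbb R\cdot\mu_0$ and $GL_n\mathbb R\cdot\mu_\infty$ are $GL_n\mathbb R$-orbits contained in $\overline{GL_n\mathbb R\cdot\mu_0}\cap S_\beta$ that meet $\mathfrak C_\beta$ (the former through $\mu_c$, the latter through $\mu_\infty$). Since that theorem asserts there is a unique such orbit, the two orbits coincide, whence $\mu_\infty\in GL_n\mathbb R\cdot\mu_0$ and $N_{\mu_0}\cong N_{\mu_\infty}$. The main obstacle is the decisive step, ruling out that $\mu_\infty$ escapes to a higher stratum $S_{\beta'}$ with $||\beta'||>||\beta||$ in the limit; everything rests on identifying the algebraic and Morse stratifications, and one should make sure this identification is genuinely available over $\mathbb R$ rather than merely borrowing the complex Kirwan--Ness picture. (Once $\mu_\infty\in S_\beta$ is known, one could equivalently conclude by noting that a distinguished orbit meeting $\mathfrak C_\beta$ is closed in $S_\beta$ by Theorem \ref{thm: orbit closure in statum}, so the limit of the orbit-preserving flow cannot leave it.)
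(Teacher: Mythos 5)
Your proposal is correct and follows essentially the same route as the paper: the paper's proof likewise rests on $\mu_\infty \in S_\beta \cap \overline{GL(n,\mathbb R)\cdot \mu_0}$ together with Theorem \ref{thm: orbit closure in statum}, merely quoting the closedness clause (the distinguished orbit $GL(n,\mathbb R)\cdot \mu_0$ is closed in $S_\beta$) rather than the uniqueness clause --- precisely the variant you note in your final parenthetical. The stratum-preservation step you flag as decisive is exactly the fact the paper also relies on, via its remark that the algebraic stratification coincides with the Morse-theoretic one over $\mathbb R$.
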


\begin{proof} This follows immediately from the fact that the limit $\mu_\infty \in S_\beta \cap \overline{GL(n,\mathbb R)\cdot \mu_0}$ and the orbit $GL(n,\mathbb R)\cdot \mu_0$ is closed in $S_\beta$.
\end{proof}

\subsection*{Automorphisms of Einstein Nilradicals}

Given that nilsolitons are precisely the critical points of $F=||m||^2$ (Theorem \ref{thm: nilsoliton vs disting point}) we have the following decomposition of $Aut(\mu)$.  The following decomposition holds more generally with $\mu$ being the critical point of $||m||^2$ and  $Aut(\mu)$ being replaced by the stabilizer subgroup of an action.  In particular, there is a similar decomposition of the automorphism group of a solvable Lie group admitting a solsoliton.

\begin{prop}\label{prop: Aut at soliton} Let $\mu\in \mathcal N$ be a soliton in the stratum $S_\beta$.  Let $G_\beta$ be the centralizer of $\beta$ in $GL_n\mathbb R$ and $U_\beta = \{g\in GL_n\mathbb R \ | \ exp(t\beta) g exp(-t\beta) \to e \ \ \mbox{ as } \ \ t\to -\infty \}$.  Then the automorphism group of $N_\mu$ decomposes as
    $$Aut(\mu) = G^\beta U^\beta = K^\beta exp(\mathfrak p^\beta) U^\beta$$
where $G^\beta = G_\beta \cap Aut$, $K^\beta = O(n)\cap G_\beta \cap Aut$, $exp(\mathfrak p^\beta) = exp(symm(n)) \cap G_\beta \cap Aut$, $U^\beta = U_\beta \cap Aut$.  
\end{prop}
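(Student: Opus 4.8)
The plan is to realize $Aut(\mu)$ inside the parabolic $P_\beta = G_\beta U_\beta$, show that the Levi decomposition of $P_\beta$ restricts to the asserted decomposition of $Aut(\mu)$, and then establish the Cartan decomposition of the reductive factor $G^\beta$ by recognizing $\mu$ as a minimal vector for a suitable $\theta$-stable group. Throughout I use that, since $N_\mu$ is a nilsoliton, $\mu$ is a critical point of $F=||m||^2$ (Theorem \ref{thm: nilsoliton vs disting point}), so $\pi(m(\mu))\mu = r\mu$ with $r=||m(\mu)||^2$. After conjugating by an element of $O(n)$ I may assume $m(\mu)=\beta$; then $\pi(\beta)\mu = ||\beta||^2\mu$ and $\mu \in Z_\beta \subseteq Y_\beta$ in the notation of Proposition \ref{prop: properties of stratification}. (The decomposition transforms naturally under such a conjugation, so this normalization is harmless.) For the first step, since $g\cdot\mu = \mu \in Y_\beta$ for every $g\in Aut(\mu)$, Proposition \ref{prop: properties of stratification}(ii) gives immediately $Aut(\mu)\subseteq P_\beta$.

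The second step is the heart of the decomposition $Aut(\mu)=G^\beta U^\beta$ and is purely formal once the eigenvector property is in hand. Fix $g\in Aut(\mu)$ and consider the conjugated curve $\exp(t\beta)\,g\,\exp(-t\beta)$. Because $g\in P_\beta$, writing $g=g_1 g_2$ with $g_1\in G_\beta$ and $g_2\in U_\beta$, this curve equals $g_1\bigl(\exp(t\beta)g_2\exp(-t\beta)\bigr)$ and hence converges to $g_1$ as $t\to-\infty$. On the other hand, using $\exp(s\beta)\cdot\mu = e^{s||\beta||^2}\mu$ together with $g\cdot\mu=\mu$, a direct computation shows that $\exp(t\beta)\,g\,\exp(-t\beta)\cdot\mu = \mu$ for every $t$; since $Aut(\mu)$ is closed, the limit $g_1$ also fixes $\mu$. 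Thus $g_1\in G_\beta\cap Aut(\mu)=G^\beta$ and consequently $g_2=g_1^{-1}g\in U_\beta\cap Aut(\mu)=U^\beta$, which yields $Aut(\mu)=G^\beta U^\beta$.

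It remains to produce the Cartan decomposition $G^\beta=K^\beta\exp(\mathfrak p^\beta)$, and this is where I expect the real work to lie: the difficulty is that $Aut(\mu)$ need not be $\theta$-stable for an arbitrary $\mu$, so I must use the soliton hypothesis to show that the reductive factor $G^\beta$ is self-adjoint. My plan is to pass to the $\theta$-stable reductive group $H_\beta$ with Lie algebra $\mathfrak h_\beta=\{X\in\mathfrak g_\beta : tr(X\beta)=0\}$. Since $\beta$ is symmetric and orthogonal to $\mathfrak h_\beta$ with respect to $\langle X,Y\rangle = tr(XY^t)$, one checks that $m_{H_\beta}(\mu)=0$; that is, $\mu$ is a minimal vector for $H_\beta$, and by Proposition \ref{prop: properties of stratification}(iv) its orbit $H_\beta\cdot\mu$ is closed. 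The real Kempf--Ness theory then applies: by \cite{RichSlow} the stabilizer $H_\beta\cap Aut(\mu)$ is $\theta$-stable, hence reductive.

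Finally I would identify $G^\beta$ with this stabilizer. Since $\beta$ is central in $\mathfrak g_\beta$, any $g\in G^\beta$ factors as $g=h\,\exp(s\beta)$ with $h\in H_\beta$; comparing $g\cdot\mu=\mu$ with $\exp(s\beta)\cdot\mu = e^{s||\beta||^2}\mu$ and using that $\mu$ has minimal norm in $H_\beta\cdot\mu$ forces $s=0$ (applying the same inequality to $g^{-1}$), so $g=h\in H_\beta$; consistently, Theorem \ref{thm: stratification of V}(iv) gives $tr(\beta D)=0$ for all $D\in Der(\mu)$, so at the Lie-algebra level $Der(\mu)\cap\mathfrak g_\beta\subseteq\mathfrak h_\beta$. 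Hence $G^\beta = H_\beta\cap Aut(\mu)$ is $\theta$-stable and reductive, so it admits the Cartan decomposition $G^\beta=K^\beta\exp(\mathfrak p^\beta)$ with $K^\beta=G^\beta\cap O(n)$ and $\mathfrak p^\beta=\{X\in Lie(G^\beta):X^t=X\}$, matching the stated identifications. Combining with the second step gives $Aut(\mu)=K^\beta\exp(\mathfrak p^\beta)U^\beta$.
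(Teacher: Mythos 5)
Your first two steps are correct and coincide with the paper's own proof: normalize $m(\mu)=\beta$ by an $O(n)$-conjugation so that $\mu\in Z_\beta$, use Proposition \ref{prop: properties of stratification}(ii) to place $Aut(\mu)$ inside $P_\beta$, and split off the unipotent part by conjugating with $\exp(t\beta)$ and letting $t\to-\infty$. The genuine gap is in your third step, in the sentence ``Since $\beta$ is central in $\mathfrak g_\beta$, any $g\in G^\beta$ factors as $g=h\exp(s\beta)$ with $h\in H_\beta$.'' The splitting $\mathfrak g_\beta=\mathfrak h_\beta\oplus\mathbb R\beta$ only yields this factorization on the identity component, and $G_\beta$ is disconnected: it is a product of groups $GL(n_i,\mathbb R)$. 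Whichever group $H_\beta$ with Lie algebra $\mathfrak h_\beta$ one takes (the connected one, or the algebraic group $\{g\in G_\beta \mid \prod_i\det(g_i)^{qa_i}=1\}$ exhibited in the paper's lemma), the character $\chi(g)=\prod_i\det(g_i)^{qa_i}$ is identically $1$ on $H_\beta$, so $\chi>0$ on $H_\beta\exp(\mathbb R\beta)$; but nothing forces an automorphism in $G_\beta$ to satisfy $\chi>0$. Concretely, let $\mu$ be the product nilsoliton on $\mathfrak h_3\oplus\mathfrak h_3$ (two equally scaled Heisenberg factors). Then $m(\mu)=\tfrac12\,\mathrm{diag}(-1,-1,1,-1,-1,1)$, so $\beta$ has eigenvalue $-\tfrac12$ on the four generators and $+\tfrac12$ on the two central directions, $q=2$, and $\chi(g)=\det(g_1)^{-1}\det(g_2)$. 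The automorphism $\sigma$ swapping the two Heisenberg factors is orthogonal and commutes with $\beta$; on the $(-\tfrac12)$-eigenspace it is the permutation $(e_1f_1)(e_2f_2)$ with determinant $+1$, while on the $(+\tfrac12)$-eigenspace it is a transposition with determinant $-1$. Hence $\chi(\sigma)=-1$, so $\sigma\in G^\beta$ but $\sigma\notin H_\beta\exp(\mathbb R\beta)$: your factorization, and with it the claimed identification $G^\beta=H_\beta\cap Aut(\mu)$, is false. Your argument proves the Cartan decomposition only for the proper subgroup $H_\beta\cap Aut(\mu)$, leaving elements such as $\sigma$ uncovered; the infinitesimal identity $Der(\mu)\cap\mathfrak g_\beta\subseteq\mathfrak h_\beta$ from Theorem \ref{thm: stratification of V}(iv) is correct but cannot see these components.

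The paper's proof avoids the issue entirely: since $G_\beta$ is the centralizer of a symmetric matrix it is $\theta$-stable, so every $g\in G^\beta$ --- on every component --- has a polar decomposition $g=k\exp(X)$ with $k\in K_\beta$, $X\in\mathfrak p_\beta$; then $K$-equivariance of $m$ gives $||m(\mu)||=||m(g\cdot\mu)||=||m(\exp(X)\cdot\mu)||$, and Lemma 7.2 of \cite{Ness} (applicable since $\mu$ is a critical point of $||m||^2$) forces $\exp(X)\cdot\mu=\mu$, hence also $k\in Aut(\mu)$. Your route via minimal vectors can be repaired, but only by enlarging the group: replace $H_\beta$ by the $\theta$-stable algebraic group $\widetilde H_\beta=\chi^{-1}(\{\pm1\})$, which has the same Lie algebra $\mathfrak h_\beta$ and hence the same moment map, so $\mu$ is still a minimal vector for it. Every $g\in G_\beta$ does factor as $h\exp(s\beta)$ with $h\in\widetilde H_\beta$ (choose $s$ with $e^{sq||\beta||^2}=|\chi(g)|$), your norm-minimality argument applied to $g$ and $g^{-1}$ pins $s=0$, and the Richardson--Slodowy self-adjointness of stabilizers of minimal vectors \cite{RichSlow} then shows $G^\beta=(\widetilde H_\beta)_\mu$ is $\theta$-stable and reductive, as you wanted.
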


\begin{proof}This result follows quickly from Proposition \ref{prop: properties of stratification}.  Let $\mu \in S_\beta$ be the nilsoliton of interest, where $S_\beta$ is the stratum defined above.  By considering $O(n)$ translates of $\mu$, we may assume that $\mu \in Z_\beta$.

Let $g\in Aut(\mu)$.  Since $g\cdot \mu = \mu \in Z_\beta \subset Y_\beta$, $g\in P_\beta$ by Part \textit{ii.} of Proposition \ref{prop: properties of stratification}, and we may write $g=g_1g_2$ where $g_1\in G_\beta$ and $g_2\in U_\beta$.  Observe that $exp(t\beta)\ g\ exp(-t\beta)$ also stabilizes $\mu$ and letting $t\to -\infty$ we see that $g_2\in Aut(\mu)$ and hence $g_1\in Aut(\mu)$.  This shows $Aut(\mu)=G^\beta U^\beta$.

Given $g\in  G^\beta$, write $g=k\ exp(X)$ where $k\in K_\beta$ and $X\in \mathfrak p_\beta$; this is possible as $G_\beta$ is stable under the transpose.  Observe, $||m (\mu)|| = ||m(g\cdot \mu)|| = ||m (exp(X)\cdot \mu)||$ and by \cite[Lemma 7.2]{Ness} we see that $exp(X)\cdot \mu = \mu$.  Thus $exp(X), \ k\in Aut(\mu)$ and the theorem is proved.
\end{proof}

There is an analogous decomposition of $Der(\mu)$ as above.  The following is presented for later use.
\begin{lemma}\label{lemma: beta plus unipotent part is semisimple} Take a nilsoliton $N_\mu$ with Einstein derivation $\beta$ and derivation algebra $Der(\mu) = \mathfrak k^\beta \oplus \mathfrak p^\beta \oplus u^\beta$.  Every element of the form
    $$ \beta + X, \quad \mbox{ with } X\in \mathfrak u^\beta$$
is semi-simple (i.e. diagonalizable).
\end{lemma}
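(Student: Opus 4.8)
The plan is to show that $\beta+X$ is conjugate, by a unipotent matrix, to the diagonal matrix $\beta$ itself, and hence is diagonalizable. Since semisimplicity is preserved under conjugation by any invertible matrix, this immediately gives the conclusion. Thus the statement, despite being phrased for derivations, reduces to a purely linear-algebraic normalization of a block-triangular matrix; I record below that the derivation property is not actually needed for the conclusion of this particular lemma (it is the surrounding context that requires $X$ to be a derivation).

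First I would unpack the block structure. Because $\beta\in B$ is diagonal, its eigenspaces give a decomposition $\mathbb R^n=V_1\oplus\cdots\oplus V_k$ on which $\beta$ acts as the scalar $b_i\,\mathrm{Id}_{V_i}$, with the $b_i$ \emph{pairwise distinct}; this is exactly the block structure for which $G_\beta=\prod_i GL(V_i)$ is the centralizer. The subalgebra $\mathfrak u_\beta=\mathrm{Lie}(U_\beta)$ consists, by the description of $U_\beta$ in Section~\ref{sec: stratifying V}, of the strictly block-lower-triangular endomorphisms, i.e. those whose only nonzero blocks are the $(i,j)$-blocks $\mathrm{Hom}(V_j,V_i)$ with $b_i>b_j$. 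In particular $X\in\mathfrak u^\beta\subset\mathfrak u_\beta$ is strictly block-lower-triangular and has \emph{no} block-diagonal part. Consequently the block-diagonal part of $\beta+X$ is precisely $\beta$, so that within each block there is no Jordan obstruction to diagonalizability: each diagonal block is already the scalar matrix $b_i\,\mathrm{Id}_{V_i}$.

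The core step is to remove the off-diagonal coupling. I would seek a unipotent element $u=I+L\in U_\beta$, with $L$ strictly block-lower-triangular, solving $(\beta+X)u=u\beta$; expanding gives the equation $\mathrm{ad}_\beta(L)=\beta L-L\beta=-X-XL$. The operator $\mathrm{ad}_\beta$ acts on the $(i,j)$-block by multiplication by the scalar $b_i-b_j$, which is nonzero on every strictly-lower block since the $b_i$ are distinct; hence $\mathrm{ad}_\beta$ is invertible on the space of strictly block-lower-triangular matrices. One then solves for $L$ by induction on the sub-diagonal level, using that the correction term $XL$ is supported strictly farther below the diagonal than $L$; this terminates because $X$ is nilpotent, and produces $L\in\mathfrak u_\beta$ and $u=I+L\in U_\beta$ with $u^{-1}(\beta+X)u=\beta$. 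Since $\beta$ is diagonal, $\beta+X$ is therefore semisimple. The same computation applies verbatim with $\beta$ replaced by the Einstein derivation $\beta+\|\beta\|^2 I$, as the two differ by a central scalar and define the same blocks.

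The only real content is the linear-algebraic normalization of the preceding paragraph, so I expect no serious obstacle; the one point requiring care is verifying that the conjugating element is genuinely unipotent and lies in $U_\beta$ (equivalently, that the inductive solution of $\mathrm{ad}_\beta(L)=-X-XL$ keeps $L$ strictly block-lower-triangular). As an alternative route that makes the role of $\mathrm{Der}(\mu)$ explicit, one may instead invoke the fact that the Jordan--Chevalley decomposition of a derivation consists of derivations: writing $\beta+\|\beta\|^2 I + X\in\mathrm{Der}(\mu)$ and noting its semisimple part has the same (distinct) block eigenvalues, the block-diagonalization above identifies the nilpotent part as zero. I would present the direct conjugation argument as the main proof, as it is shortest and self-contained.
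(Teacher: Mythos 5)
Your proof is correct and takes essentially the same approach as the paper's, which is only a sketch: the paper likewise argues that $\beta+X$ can be conjugated to $\beta$ via $U_\beta$, in analogy with diagonalizing a triangular matrix whose diagonal entries are distinct. Your inductive solution of $\mathrm{ad}_\beta(L)=-X-XL$ (using that $\mathrm{ad}_\beta$ is invertible on the strictly block-lower-triangular matrices because the block eigenvalues of $\beta$ are pairwise distinct) simply supplies the computation the paper leaves to the reader.
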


\begin{proof}[Sketch of proof]
The proof of this fact is analogous to showing that any upper triangular matrix with non-zero distinct entries on the diagonal can be diagonalized.  One carries out similar computations in this case (as the entries of $\beta$ are all positive and $\mathfrak u^\beta$ has an appropriate block structure) to show that $\beta +X$ can be conjugated to $\beta$ via $U_\beta$.
\end{proof}

\section{Pre-Einstein derivations}\label{sec: pre-einstein derivations}

Let $\mathfrak n_\mu$ be an Einstein nilradical with Ricci tensor $Ric_\mu = cId + D$, for some $c\in \mathbb R$ and $D\in Der (\mu)$.  This derivation is semisimple with real, positive eigenvalues and satisfies the condition
    $$tr(D\psi) = -c\ tr\ \psi  $$
for any $\psi \in Der(\mu)$.  The derivation $D$ satisfying $Ric=cId + D$ is called an \textit{Einstein derivation} as its existence is necessary for solvable extensions of $\mathfrak n_\mu$ to admit an Einstein metric.  As such, we make the following definition.

\begin{defin}\label{def: pre-einstein derivation}A derivation $\phi$ of a Lie algebra $\mathfrak s_\mu$ is called pre-Einstein, if it is semisimple, with all eigenvalues real, and
    \begin{equation}\label{eqn: pre-Einstein definition}tr (\phi \psi) = tr\ \psi \mbox{ , for any }  \psi \in Der(\mu)\end{equation}
\end{defin}

The so-called Einstein derivation $D$ gives the pre-Einstein derivation $\phi = \frac{D}{-c}$, and viceversa.  Remarkably, determining the pre-Einstein derivation almost completely determines when a nilpotent Lie algebra admits a nilsoliton metric.  This derivation first appeared in \cite{Nikolayevsky:EinsteinSolvmanifoldsandPreEinsteinDerivation}.

\begin{thm}\cite{Nikolayevsky:EinsteinSolvmanifoldsandPreEinsteinDerivation}\label{thm: niko-EinsteinSolv}
    \begin{enumerate}
    \item (a) Any Lie algebra $\mathfrak s_\mu$ admits a pre-Einstein derivation $\phi_\mu$.\\
        (b) The derivation $\phi_\mu$ is determined uniquely up to automorphism of $\mathfrak s_\mu$.\\
        (c) All the eigenvalues of $\phi_\mu$ are rational numbers.
    \item Let $\mathfrak n_\mu$ be a nilpotent Lie algebra, with $\phi$ a pre-Einstein derivation. If $\mathfrak n_\mu$ is an Einstein nilradical, then its Einstein derivation is positively proportional to $\phi$.
    \end{enumerate}
\end{thm}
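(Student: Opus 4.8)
The plan is to exploit that $Der(\mu)\subset\mathfrak{gl}(n,\mathbb R)$ is an algebraic Lie algebra and to pin $\phi$ down inside a maximal split torus, where the trace form is positive definite. First I would fix a Mostow decomposition $Der(\mu)=\mathfrak a\ltimes\mathfrak u$, with $\mathfrak u$ the unipotent radical (nilpotent endomorphisms of $\mathfrak n_\mu$) and $\mathfrak a$ maximal fully reducible, and split $\mathfrak a=\mathfrak t_s\oplus\mathfrak t_c\oplus[\mathfrak a,\mathfrak a]$, where $\mathfrak t_s,\mathfrak t_c$ are the real- and imaginary-eigenvalue parts of the center $\mathfrak z(\mathfrak a)$. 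The key reduction is that for a candidate $\phi\in\mathfrak t_s$ the identity $tr(\phi\psi)=tr\,\psi$ only has to be verified on $\mathfrak t_s$: on $[\mathfrak a,\mathfrak a]$ both sides vanish since these are sums of commutators and $\phi$ is central; on $\mathfrak t_c$ both sides vanish since the relevant eigenvalues are purely imaginary while the traces are real; and on $\mathfrak u$ both sides vanish by a weight-space argument. For the last point, writing $\mathfrak n_\mu=\bigoplus_a V_a$ for the $\phi$-eigenspaces, only the $ad\,\phi$-weight-zero part $\psi_0$ of $\psi$ contributes to $tr(\phi\psi)$; since $\mathfrak u$ is an $ad\,\phi$-invariant ideal, $\psi_0\in\mathfrak u$ is nilpotent, preserves each $V_a$, and hence has vanishing trace there. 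On $\mathfrak t_s$ the form $(\psi_1,\psi_2)\mapsto tr(\psi_1\psi_2)$ is positive definite (a sum of squares of real eigenvalues), so the functional $\psi\mapsto tr\,\psi$ is represented by a unique $\phi\in\mathfrak t_s$, which is automatically semisimple with real eigenvalues. This proves (1a).

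For rationality (1c) I would make this $\phi$ explicit. Diagonalizing $\mathfrak t_s$ on $\mathfrak n_\mu$ in a basis $\{e_i\}$, maximality forces $\mathfrak t_s$ to be exactly the space $W\subset\mathbb R^n$ of real diagonal derivations, which is cut out by the integer relations $\psi_i+\psi_j=\psi_k$ whenever the structure constant $c_{ij}^k\neq 0$; thus $W$ is a rational subspace. Writing $\phi=(d_1,\dots,d_n)$, the condition $\sum_i d_i\psi_i=\sum_i\psi_i$ for all $\psi\in W$ says exactly that $\phi$ is the orthogonal projection of $\mathbf{1}=(1,\dots,1)$ onto $W$. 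Since both $W$ and $\mathbf{1}$ are rational, so is $\phi$, and its eigenvalues $d_i$ are rational.

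Uniqueness up to automorphism (1b) I would obtain from two facts. The pre-Einstein condition is $Aut(\mu)$-invariant: for $g\in Aut(\mu)$ one has $tr(g\phi g^{-1}\psi)=tr(\phi\,g^{-1}\psi g)=tr(g^{-1}\psi g)=tr\,\psi$, using $g^{-1}\psi g\in Der(\mu)$. Moreover any pre-Einstein derivation, being semisimple with real eigenvalues, lies in the split part of the center of some maximal fully reducible subalgebra of $Der(\mu)$, and all such are conjugate under the identity component of $Aut(\mu)$ (Mostow conjugacy). Hence two pre-Einstein derivations can both be carried into the fixed $\mathfrak t_s$, where the representing vector is unique by the positive-definiteness above; they therefore agree there and so are conjugate by an automorphism.

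Part 2 is then immediate from the properties already recorded for the Einstein derivation: if $\mathfrak n_\mu$ is an Einstein nilradical with $Ric=cId+D$, then $D$ is semisimple with positive real eigenvalues and $tr(D\psi)=-c\,tr\,\psi$ for all $\psi\in Der(\mu)$, whence $-c>0$ and $\phi':=D/(-c)$ satisfies $tr(\phi'\psi)=tr\,\psi$; thus $\phi'$ is a pre-Einstein derivation. By (1b) it is conjugate to $\phi$ by an automorphism, so $D=(-c)\phi'$ is positively proportional to $\phi$. I expect the main obstacle to be the structural input rather than the final bookkeeping: establishing that $Der(\mu)$ is algebraic with a usable Mostow decomposition, and the conjugacy of maximal split tori needed for (1b). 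Once the problem is pushed into $\mathfrak t_s$, positive-definiteness of the trace form makes existence, rationality, and uniqueness fall out almost formally.
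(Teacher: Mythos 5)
Your overall route coincides with the paper's presentation of Nikolayevsky's argument (given inside the proof of Proposition \ref{prop: pre-einstein deriv and dropping semisimple condition}): a Levi--Mostow decomposition of $Der(\mu)$, reduction of the identity $tr(\phi\psi)=tr\,\psi$ to the split torus $\mathfrak t_s$, where the trace form is positive definite (existence); Mostow conjugacy for uniqueness; and the rescaling $\phi=D/(-c)$ for part (ii). Your rationality argument is a genuine addition, since the paper only cites that part. However, there is one genuine gap, in (1b). You assert that any pre-Einstein derivation, ``being semisimple with real eigenvalues, lies in the split part of the center of some maximal fully reducible subalgebra of $Der(\mu)$.'' Semisimplicity with real eigenvalues does not imply this, and as a statement about such elements it is false: take $\mathfrak n_\mu$ abelian, so that $Der(\mu)=\mathfrak{gl}(n,\mathbb R)$ is itself the unique maximal fully reducible subalgebra, with center $\mathbb R\, Id$; then $diag(1,2,\dots,n)$ is semisimple with real eigenvalues but is central in no maximal fully reducible subalgebra. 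What is true---and what the argument needs---is that a pre-Einstein $\phi'$ lies in \emph{some} maximal fully reducible subalgebra $\mathfrak a'$ (the line $\mathbb R\phi'$ is fully reducible, hence contained in a maximal one), and then the pre-Einstein equation itself forces centrality: for $\psi\in[\mathfrak a',\mathfrak a']$ one has $tr(\phi'\psi)=tr\,\psi=0$; the center of $\mathfrak a'$ is trace-orthogonal to $[\mathfrak a',\mathfrak a']$ by ad-invariance of the trace form, and the trace form is nondegenerate on the semisimple algebra $[\mathfrak a',\mathfrak a']$ (it acts faithfully), so the $[\mathfrak a',\mathfrak a']$-component of $\phi'$ vanishes; finally, having real eigenvalues kills the compact part of the central component. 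Without this step your uniqueness applies only to the pre-Einstein derivations produced by the existence construction, whereas (1b)---and your deduction of part (ii) from it---requires uniqueness among \emph{all} pre-Einstein derivations. (The paper's sketch is equally terse at this point, but this is exactly the content needed to make Mostow conjugacy applicable.)

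A smaller inaccuracy occurs in (1c): the claim that ``maximality forces $\mathfrak t_s$ to be exactly the space $W$ of real diagonal derivations'' is false (in the same abelian example $\mathfrak t_s=\mathbb R\,Id$ while $W$ consists of all diagonal matrices), but it is harmless. Your projection argument needs only that $\phi\in\mathfrak t_s\subseteq W$ and that $tr(\phi\psi)=tr\,\psi$ holds for every $\psi\in W\subseteq Der(\mu)$; this already says $\phi$ is the orthogonal projection of $\mathbf 1$ onto $W$, and $W$ is a rational subspace because its defining equations $\psi_i+\psi_j=\psi_k$ (whenever $c_{ij}^k\neq 0$) have integer coefficients regardless of the values of the structure constants. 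With these two repairs, the proposal is a complete proof along the paper's lines, and indeed proves slightly more than the paper writes out.
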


\begin{remark*}
Part ii. above is particularly useful as it can be difficult to determine, a priori, which stratum $\mu$ belongs to (cf. Section \ref{sec: stratifying V}).  Moreover, we will see that determining the pre-Einstein derivation reduces to solving a system of linear equations (the condition of semi-simplicity can be discarded, cf. Proposition \ref{prop: pre-einstein deriv and dropping semisimple condition}).
\end{remark*}

Let $\mathfrak n_\mu$ be a nilpotent Lie algebra with a choice of pre-Einstein derivation $\phi_\mu$.  Associated to $\phi_\mu$ we have the following subalgebra
    $$\mathfrak g_\mu = \mathfrak z(\phi_\mu) \cap ker(T) \subset \mathfrak{sl}(n,\mathbb R)$$
where $\mathfrak z(\phi_\mu)$ is the centralizer of $\phi_\mu$ and $T(A) = tr(A\phi_\mu)$.  Let $G_{\phi_\mu}\subset SL(n,\mathbb R)$ be the Lie group with algebra $\mathfrak g_{\phi_\mu}$, this is an algebraic group.

\begin{thm}\cite{Nikolayevsky:EinsteinSolvmanifoldsandPreEinsteinDerivation}\label{thm: niko- Einstein nilrad vs. closed orbit} For a nilpotent Lie algebra $\mathfrak n_\mu$  with a pre-Einstein derivation $\phi$, the following
conditions are equivalent:
    \begin{enumerate}
    \item $\mathfrak n_\mu$ is an Einstein nilradical
    \item the orbit $G_\phi \cdot \mu \subset  V$ is closed
    \end{enumerate}
\end{thm}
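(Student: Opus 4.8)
The plan is to derive the equivalence from the two engines already available: the nilsoliton/distinguished-orbit dictionary of Theorem \ref{thm: nilsoliton vs disting point}, and the centralizer reduction of Theorem \ref{thm: G dist iff ZGH dist}. The entire content is to \emph{upgrade} a distinguished $GL_n\mathbb R$-orbit to a genuinely \emph{closed} orbit for the smaller group $G_\phi$. First I would reduce to the case that $\phi$ is symmetric: since $\phi$ is semisimple with real eigenvalues it is conjugate to a diagonal matrix, and conjugating $\mu$ by $h\in GL_n\mathbb R$ replaces $\phi$ by $h\phi h^{-1}$, replaces $G_\phi$ by $hG_\phi h^{-1}$, and carries $G_\phi\cdot\mu$ to $h\cdot(G_\phi\cdot\mu)$; both the Einstein-nilradical property and closedness of the orbit survive this. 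With $\phi$ symmetric, $\mathfrak g_\phi=\mathfrak z(\phi)\cap\ker T\cap\mathfrak{sl}(n,\mathbb R)$ is $\theta$-stable, so $G_\phi$ is a reductive $\theta$-stable algebraic group and the machinery of Sections \ref{sec: variety of Lie brackets}--\ref{sec: bracket flow} applies.

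The key structural set-up is to compare $G_\phi$ with the full centralizer. I set $H=\exp(\mathbb R\phi)\subset Aut(\mu)$, which is $\theta$-stable, and let $Z=Z_{GL_n\mathbb R}(H)$, whose Lie algebra is $\mathfrak z(\phi)$. Theorem \ref{thm: G dist iff ZGH dist} then says $GL_n\mathbb R\cdot\mu$ is distinguished if and only if $Z\cdot\mu$ is, and that $m_{GL_n\mathbb R}=m_Z$ along $Z\cdot\mu$. The groups $Z$ and $G_\phi$ differ only by the central two-torus $A=\exp(\mathbb R\,Id)\exp(\mathbb R\phi)$: one has $\mathfrak z(\phi)=\mathfrak g_\phi\oplus\mathbb R\,Id\oplus\mathbb R\phi$, because the functionals $X\mapsto tr\,X$ and $X\mapsto tr(X\phi)$ are independent on $\mathfrak z(\phi)$ (here $tr\,\phi=tr(\phi^2)=||\phi||^2$ by the pre-Einstein relation applied to $\psi=\phi$, so the relevant Gram determinant $tr\,\phi\,(n-tr\,\phi)$ is nonzero for $\mathfrak n_\mu$ non-abelian). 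The decisive elementary fact is that $A$ moves $\mu$ only by scaling: $\pi(Id)\mu=-\mu$ while $\pi(\phi)\mu=0$ since $\phi\in Der(\mu)$. Hence $Z\cdot\mu=\mathbb R^{>0}(G_\phi\cdot\mu)$, any element of $\operatorname{span}\{Id,\phi\}$ acts on $\mu$ as a scalar, and at the level of moment maps $m_{G_\phi}=\operatorname{proj}_{\mathfrak p_\phi}m_{GL_n\mathbb R}$ and $m_Z=\operatorname{proj}_{\mathfrak z(\phi)\cap symm(n)}m_{GL_n\mathbb R}$ differ exactly in the two directions $Id,\phi$, with $\mathfrak z(\phi)\cap symm(n)=\mathfrak p_\phi\oplus\mathbb R\,Id\oplus\mathbb R\phi$.

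With this in hand both implications are short. For (ii)$\Rightarrow$(i): if $G_\phi\cdot\mu$ is closed it contains a minimal vector $\mu'$ with $m_{G_\phi}(\mu')=0$ (the real Kempf--Ness theorem, cf. \cite{RichSlow} and Theorem \ref{thm: limit under grad flow}); since $G_\phi$ centralizes $\phi$ we still have $\phi\in Der(\mu')$, so $m_Z(\mu')\in\operatorname{span}\{Id,\phi\}$ and therefore $\pi(m_Z(\mu'))\mu'$ is a scalar multiple of $\mu'$, i.e.\ $\mu'$ is a critical point of $||m_Z||^2$. Thus $Z\cdot\mu$ is distinguished, hence $GL_n\mathbb R\cdot\mu$ is distinguished, hence $\mathfrak n_\mu$ is an Einstein nilradical by Theorem \ref{thm: nilsoliton vs disting point}. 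For (i)$\Rightarrow$(ii): Theorems \ref{thm: nilsoliton vs disting point} and \ref{thm: G dist iff ZGH dist} furnish a critical point $\nu\in Z\cdot\mu$ of $||m_Z||^2$ with $m_Z(\nu)=m_{GL_n\mathbb R}(\nu)=4\,Ric_\nu$, so $N_\nu$ is a nilsoliton. Since $\nu=z\cdot\mu$ with $z$ centralizing $\phi$, the pre-Einstein derivation of $\nu$ is again $\phi$, and now Theorem \ref{thm: niko-EinsteinSolv}(ii) gives $Ric_\nu=c\,Id+\kappa\phi\in\operatorname{span}\{Id,\phi\}$; consequently $m_{G_\phi}(\nu)=\operatorname{proj}_{\mathfrak p_\phi}(4\,Ric_\nu)=0$. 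Thus $\nu$ is a minimal vector, $G_\phi\cdot\nu$ is closed, and since $\nu\in\mathbb R^{>0}(G_\phi\cdot\mu)$ with scaling central, $G_\phi\cdot\mu$ itself is closed.

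The step I expect to be the main obstacle is exactly the passage from \emph{distinguished} to \emph{closed}: the abstract correspondence only produces a critical point of $||m||^2$ with an a priori nonzero critical value, whereas (ii) demands critical value $0$. The device resolving this is the choice of $G_\phi$, which quotients out precisely the two directions $Id$ and $\phi$ that carry the critical value; the load-bearing input is Theorem \ref{thm: niko-EinsteinSolv}(ii), pinning the nilsoliton's Ricci operator into $\operatorname{span}\{Id,\phi\}$ so that its $G_\phi$-moment map vanishes. Without this proportionality one would only conclude that $G_\phi\cdot\mu$ is distinguished, not closed. The degenerate abelian case $\phi=Id$ (where $tr\,\phi=n$) falls outside this set-up and should be recorded separately as trivially flat.
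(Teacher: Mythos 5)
First, a remark on the comparison itself: the paper does not prove this theorem --- it is quoted from Nikolayevsky's work --- so there is no internal proof to measure you against, and I am judging your argument on its own merits. Your route (reduce to $\phi$ symmetric, compare $G_\phi$ with $Z=Z_{GL_n\mathbb R}(\exp \mathbb R\phi)$ via Theorem \ref{thm: G dist iff ZGH dist}, and translate between ``critical point of $||m_Z||^2$ with critical value in $\mathrm{span}\{Id,\phi\}$'' and ``zero of $m_{G_\phi}$,'' using Richardson--Slodowy for the closed-orbit/minimal-vector dictionary) is sound, and both implications are organized correctly.

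There is, however, one claim doing real load-bearing work that is false: that the Gram determinant $tr\,\phi\,(n-tr\,\phi)$ is nonzero whenever $\mathfrak n_\mu$ is non-abelian. Since $tr\,\phi = tr\,\phi^2=\sum\lambda_i^2$, one has $tr\,\phi=0$ exactly when $\phi=0$, and $\phi=0$ \emph{is} the pre-Einstein derivation of any nilpotent algebra all of whose derivations are traceless --- in particular of every characteristically nilpotent Lie algebra (all derivations nilpotent). These are non-abelian, and they are precisely a basic class of non--Einstein nilradicals, so they cannot be excluded; your set-up collapses there ($H$ is trivial, $G_\phi=SL_n\mathbb R$), and you record only the other degenerate case $\phi=Id$. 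The gap is patchable, but not by the claim you made: in the direction (i)$\Rightarrow$(ii) you may assume $\mathfrak n_\mu$ is a non-abelian Einstein nilradical, and then $\phi\neq 0$ is automatic because the Einstein derivation is nonzero (it has positive eigenvalues) and is positively proportional to $\phi$; one then gets $0<tr\,\phi<n$ (Cauchy--Schwarz gives $tr\,\phi\leq n$ with equality only for $\phi=Id$), so nondegeneracy holds \emph{under} hypothesis (i). In the direction (ii)$\Rightarrow$(i) no nondegeneracy is needed at all, since the orthogonal splitting $\mathfrak z(\phi)\cap symm(n)=(\mathfrak g_\phi\cap symm(n))\oplus\mathrm{span}\{Id,\phi\}$ persists even when $\phi=0$ (and the abelian case $\mu=0$ is trivial on both sides). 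A second, smaller point: Theorem \ref{thm: niko-EinsteinSolv}(ii) pins the Einstein derivation to $\phi$ only up to $Aut$-conjugacy, because pre-Einstein derivations are unique only up to automorphism, whereas your projection argument needs $D=\kappa\phi$ on the nose for your specific $\nu$. This is true and quick to justify --- $D/(-c)$ and $\phi$ are both \emph{symmetric} pre-Einstein derivations of $\nu$, so $\delta=D/(-c)-\phi$ is a symmetric derivation with $tr(\delta\psi)=0$ for every $\psi\in Der(\nu)$, and taking $\psi=\delta$ gives $||\delta||^2=0$ --- but it should be said, since as written you are citing the theorem for more than it literally guarantees.
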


In this way, we see that the property of a nilpotent Lie group being an Einstein nilradical is intrinsic to its Lie algebra.  We will build on this result to obtain an algorithm which determines the condition of being an Einstein nilradical using only local data, see Section \ref{sec: alg for nilsoliton}.  To simplify our work, 
we present the following reduction.

\begin{prop}\label{prop: pre-einstein deriv and dropping semisimple condition}If $\mathfrak n_\mu$ is an Einstein nilradical, then any solution to Equation (\ref{eqn: pre-Einstein definition}) will automatically be a pre-Einstein derivation, i.e. it is automatically semi-simple with real, positive eigenvalues.
\end{prop}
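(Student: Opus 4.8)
The plan is to reduce to the nilsoliton representative and then exploit the grading of $Der(\mu)$ by the Einstein derivation. Equation (\ref{eqn: pre-Einstein definition}), together with the conditions of being semisimple with real positive eigenvalues, is invariant under conjugating $\mu$ by $GL(n,\mathbb R)$: such a conjugation carries $Der(\mu)$ to $Der(g\cdot\mu)$, preserves traces, and preserves spectra. Since $\mathfrak n_\mu$ is an Einstein nilradical, its orbit contains a critical point of $F=||m||^2$, so I may assume $\mu$ is that critical point and work with the nilsoliton metric. Let $\phi_0 = \frac{1}{-c}D$ be the pre-Einstein derivation arising from $Ric = cId + D$; by the discussion preceding Definition \ref{def: pre-einstein derivation} it solves Equation (\ref{eqn: pre-Einstein definition}) and is semisimple with real eigenvalues, and for a critical point in the stratum $S_\beta$ it is a positive multiple of $\beta + ||\beta||^2 I$.

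Next I would study the difference. Let $\phi$ be an arbitrary solution of Equation (\ref{eqn: pre-Einstein definition}) and set $E = \phi - \phi_0 \in Der(\mu)$; subtracting the two instances of the equation gives $tr(E\psi)=0$ for every $\psi \in Der(\mu)$. Grade $Der(\mu)$ by the eigenvalues of $ad\ \beta$ (equivalently of $ad\ \phi_0$). By Proposition \ref{prop: Aut at soliton} and Lemma \ref{lemma: beta plus unipotent part is semisimple}, this grading carries only nonnegative weights: $Der(\mu) = \mathfrak g^\beta \oplus \mathfrak u^\beta$, where the weight-zero piece $\mathfrak g^\beta = Der(\mu)\cap\mathfrak g_\beta$ is the reductive, $\theta$-stable centralizer of $\beta$, and $\mathfrak u^\beta$ is the sum of the strictly positive weight spaces.

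The decisive observation is that the trace form $tr(XY)$ pairs the weight-$\lambda$ space with the weight-$(-\lambda)$ space; because $Der(\mu)$ has no negative weights, the positive-weight component $E_+ \in \mathfrak u^\beta$ of $E$ pairs to zero against all of $Der(\mu)$ automatically. Hence the constraint $tr(E\psi)=0$ collapses to $tr(E_0\psi_0)=0$ for all $\psi_0\in\mathfrak g^\beta$, where $E_0$ is the weight-zero component of $E$. Since $\mathfrak g^\beta$ is $\theta$-stable we have $E_0^t\in\mathfrak g^\beta$, and choosing $\psi_0=E_0^t$ yields $||E_0||^2 = tr(E_0E_0^t)=0$, so $E_0=0$. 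Therefore $E = E_+\in\mathfrak u^\beta$ and $\phi = \phi_0 + E_+$.

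Finally I would conclude semisimplicity and positivity. Writing $\phi_0 = r(\beta + ||\beta||^2 I)$ with $r>0$, we have $\phi = r(\beta + r^{-1}E_+) + r||\beta||^2 I$. Lemma \ref{lemma: beta plus unipotent part is semisimple} shows $\beta + r^{-1}E_+$ is semisimple and $U_\beta$-conjugate to $\beta$; adding the scalar $r||\beta||^2 I$, which commutes with everything, leaves $\phi$ semisimple and conjugate to $\phi_0$. Thus the eigenvalues of $\phi$ equal $r$ times those of $\beta + ||\beta||^2 I$, which are positive since $\beta + ||\beta||^2 I$ is positive definite on nilpotent strata (Theorem \ref{thm: stratification of V}). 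I expect the main obstacle to be the bookkeeping of the third paragraph: recognizing that the absence of negative weights in $Der(\mu)$ is exactly what confines the constraint to the reductive part $\mathfrak g^\beta$, where $\theta$-stability can be invoked, and pinning down the identification of the weight-zero algebra with $\mathfrak g^\beta$ and the proportionality $\phi_0 \propto \beta + ||\beta||^2 I$ precisely.
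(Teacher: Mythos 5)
Your proposal is correct, and its logical skeleton is the same as the paper's: reduce to the soliton point, observe that the difference $E$ of two solutions of Equation (\ref{eqn: pre-Einstein definition}) satisfies $tr(E\psi)=0$ for all $\psi\in Der(\mu)$, show the nilpotent directions of $Der(\mu)$ are automatically in the kernel of this pairing, use $\theta$-stability of the reductive part (available at the soliton by Proposition \ref{prop: Aut at soliton}) to kill the reductive component via $tr(E_0E_0^t)=0$, and finish with Lemma \ref{lemma: beta plus unipotent part is semisimple}. Where you genuinely diverge is in the decomposition that organizes step two. The paper reproduces Nikolayevsky's argument and works with a Levi decomposition $Der(\mu)=\mathfrak s\oplus\mathfrak t\oplus\mathfrak n$, quoting the general fact that the nilpotent part $\mathfrak n$ of the radical lies in the kernel of the trace form; it must then (implicitly) identify $\mathfrak n$ with a subspace of $\mathfrak u^\beta$ before Lemma \ref{lemma: beta plus unipotent part is semisimple} can be invoked. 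You instead use the parabolic/weight decomposition $Der(\mu)=\mathfrak g^\beta\oplus\mathfrak u^\beta$ of Proposition \ref{prop: Aut at soliton} and derive the needed orthogonality from $ad$-invariance of the trace form plus the absence of negative weights; this lands the difference $E_+$ directly in $\mathfrak u^\beta$, where the lemma applies verbatim, and your $U_\beta$-conjugacy observation also makes the positivity of the eigenvalues explicit (via Theorem \ref{thm: stratification of V}, part \textit{ii}), a point the paper leaves implicit. What the paper's route buys in exchange is generality: by re-deriving the Levi-decomposition argument it simultaneously establishes existence and uniqueness of the pre-Einstein derivation for an arbitrary Lie algebra, whereas you take the existence of $\phi_0$ and the proportionality $\phi_0\propto\beta+||\beta||^2 I$ at the critical point as given from the soliton structure. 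Both versions lean on the same two external inputs (Proposition \ref{prop: Aut at soliton} and Lemma \ref{lemma: beta plus unipotent part is semisimple}), so yours is a legitimate and in fact somewhat more self-contained variant of the argument rather than a fundamentally different proof.
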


\begin{remark}In this way, we see that if a solution to Equation (\ref{eqn: pre-Einstein definition}) is not semi-simple, then the nilpotent group in question is not an Einstein nilradical.
\end{remark}

\begin{proof}The proof amounts to analyzing Nikolayevki's proofs and combining those details with Lemma \ref{lemma: beta plus unipotent part is semisimple}.  For the sake of completeness,  we present Nikolayevski's proof of existence and uniqueness (up to conjugation in $Aut$) of the pre-Einstein derivation.\bigskip

First we find one pre-Einstein derivation.
Let $\mathfrak s_\mu$ be a Lie algebra and denote by $Der(\mu)$ its algebra of derivations; this is an algebraic Lie algebra (meaning it is the Lie algebra of an algebraic group).  Consider a Levi decomposition $Der(\mu) = \mathfrak s \oplus \mathfrak t \oplus \mathfrak n$ where $\mathfrak t \oplus \mathfrak n$ is the radical of $Der(\mu)$, $\mathfrak s$ is semisimple, and $\mathfrak n$ is the set of  nilpotent elements (the nilradical) of $\mathfrak t \oplus \mathfrak n$, $\mathfrak t$ is a torus (abelian subalgebra with semisimple elements), and $[\mathfrak t, \mathfrak s ]=0$.

Recall, for $\psi \in \mathfrak{gl}(n,\mathbb R)$ a semisimple endomorphism, there exist semisimple endomorphisms $\psi^\mathbb R$ and $\psi^{i\mathbb R}$ (the real and imaginary parts) which have real, resp. purely imaginary, eigenvalues such that $\psi = \psi^\mathbb R + \psi^{i\mathbb R}$ and all three endomorphisms  commute.  Moreover, the subspaces $\mathfrak t_c = \{ \psi^{i\mathbb R} \ | \ \psi \in \mathfrak t\}$ and $\mathfrak t_s = \{ \psi^{\mathbb R} \ | \ \phi\in \mathfrak t \}$ are the compact and the fully R-reducible tori (the elements of $\mathfrak t_s$ are simultaneously diagonalizable) with $\mathfrak t_s \oplus \mathfrak t_c = \mathfrak t$.

We will find a pre-Einstein derivation contained in $\mathfrak t_s$. Consider the quadratic form $b$ on $Der(\mu)$ defined by $b(\psi_1,\psi_2) = tr(\psi_1\psi_2)$.  It is a general fact that $\mathfrak n$ is in the kernel of this quadratic form, hence
    $$b(\mathfrak t, \psi) = 0 = tr(\psi)$$
for any $\psi \in \mathfrak n$.  Using the ad-invariance of $b$ (that is, $b(\psi_1, [\psi_2,\psi_3]) = b([\psi_3,\psi_1],\psi_2)$) and that $\mathfrak s=[\mathfrak s,\mathfrak s]$ is semisimple, we see that
    $$b(\mathfrak t, \psi) = 0 = tr(\psi)$$
for any $\psi \in \mathfrak s$.  Thus it suffices to solve Equation (\ref{eqn: pre-Einstein definition}) with $\phi, \psi \in \mathfrak t$.  Additionally, observe that
    $$b(\mathfrak t_s, \psi) = 0 = tr(\psi)$$
for any $\psi \in \mathfrak t_c$.  Lastly, as the quadratic form $b$ restricted to $\mathfrak t_s$ is positive definite, the existence (and uniqueness in $\mathfrak t$) follows.

To obtain the uniqueness of the pre-Einstein derivation up to conjugation in $Aut$, Nikolayevski exploits a theorem of Mostow \cite[Theorem 4.1]{Mostow:FullyReducibleSubgrpsOfAlgGrps} which says that all fully reducible subalgebras of $Der(\mu)$ are conjugate via an inner automorphism of $Der(\mu)$.  Lastly, as the center of a reducible algebra is uniquely defined, we have the desired result.\bigskip

Now we analyze this proof to study all solutions to Equation (\ref{eqn: pre-Einstein definition}).  Let $A \in \mathfrak s\oplus \mathfrak t$ be a solution to $tr(A \psi)=0$ for all $\psi\in Der(\mu)$.  We will show that $A=0$.  To see this, first assume that our Lie algebra $\mathfrak n_\mu$ is endowed with an inner product so that $\mathfrak s \oplus \mathfrak t$ is stable under the transpose operation.  This is always possible; when $\mathfrak n_\mu$ is an Einstein nilradical such a metric is explicitly given in Proposition \ref{prop: Aut at soliton}.  Using this inner product, $\psi = A^t \in Der(\mu)$ and $0=tr(A\psi)=tr(AA^t)$ implies $A=0$.

Let $\phi \in \mathfrak t$ be a pre-Einstein derivation of $\mathfrak n_\mu$, the above work shows that any solution to Equation (\ref{eqn: pre-Einstein definition}) is of the form $\phi + X$ where $X\in \mathfrak n$ (the nilpotent part of the radical of $Der(\mu)$).  And applying Lemma \ref{lemma: beta plus unipotent part is semisimple} we are finished.
\end{proof}

\section{Algorithm to determine Einstein nilradical}\label{sec: alg for nilsoliton}

In this section, we demonstrate how the existence of a nilsoliton on a nilpotent Lie group can be read off from local data.  More precisely, let $N$ be a nilpotent Lie group of interest with Lie algebra $\mathfrak n$.  To determine if $N$ admits a nilsoliton, one only needs to analyze $Der(\mathfrak n)$ and certain infinitesimal deformations of any initial left-invariant metric on $N$.

\begin{thm}\label{thm: existence of nilsoliton} The existence of a nilsoliton metric on a nilpotent Lie group $N$ is intrinsic to the underlying Lie algebra $\mathfrak n$.  More precisely, one can determine the existence of such a metric by analyzing the derivation algebra $Der(\mathfrak n)$ and infinitesimal deformations of any initial metric on $\mathfrak n$.
\end{thm}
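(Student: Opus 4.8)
The plan is to combine the moment-map description of nilsolitons with Nikolayevsky's closed-orbit criterion, and then to argue that every ingredient entering that criterion is extracted from $Der(\mathfrak n)$ together with the behavior of one auxiliary deformation flow. First I would recall that, by Theorem \ref{thm: nilsoliton vs disting point}, $N = N_\mu$ admits a nilsoliton precisely when $\mathfrak n_\mu$ is an Einstein nilradical, and that by Theorem \ref{thm: niko- Einstein nilrad vs. closed orbit} this is equivalent to closedness of the orbit $G_\phi \cdot \mu \subset V$, where $\phi$ is a pre-Einstein derivation and $G_\phi$ is the algebraic group with Lie algebra $\mathfrak z(\phi) \cap \ker T$, $T(A) = tr(A\phi)$. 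The reduction to local data then has two stages. The derivation $\phi$ is produced purely algebraically: by Theorem \ref{thm: niko-EinsteinSolv} it exists and is unique up to automorphism, and by Proposition \ref{prop: pre-einstein deriv and dropping semisimple condition} it can be found by solving the linear system (\ref{eqn: pre-Einstein definition}) inside $Der(\mathfrak n)$, discarding the semisimplicity requirement and testing it afterward; if the resulting solution fails to be semisimple with real eigenvalues, then $\mathfrak n$ is not an Einstein nilradical and the procedure terminates. Hence $\phi$, and therefore the reductive $\theta$-stable group $G_\phi$, is read off from $Der(\mathfrak n)$ alone.

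Next I would address the genuinely analytic point: deciding whether $G_\phi \cdot \mu$ is closed. Fixing the inner product and deforming $\mu$ within $G_\phi \cdot \mu$ corresponds, via the identification at the start of Section \ref{sec: variety of Lie brackets}, to fixing the bracket and deforming the left-invariant metric; the tangent directions $\pi(X)\mu$ with $X \in \mathfrak g_\phi$ are exactly the infinitesimal deformations of the initial metric, and the instantaneous velocity of the negative gradient flow of $||m_{G_\phi}||^2$ is such a deformation, governed by the Ricci tensor through $m(\mu) = 4\,Ric$. By Proposition \ref{prop: unique limit point} this flow has a unique limit $\mu_\infty$, and by Theorem \ref{thm: orbit closure in statum} together with Theorem \ref{thm: limit under grad flow} the orbit is closed if and only if $\mu_\infty$ lies in that same orbit, equivalently if and only if the orbit meets the critical set and so contains a minimal vector for $G_\phi$. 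To make this effective I would replace $G_\phi$ by the finer algebraic subgroup $I_{G_\phi}(H)$ of Proposition \ref{prop: iG(H)}, with $H$ the stabilizer at $\mu$, whose orbit coincides with the relevant one, so that the closedness test is carried out inside an explicitly computed reductive group acting on a computable deformation space.

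The main obstacle, I expect, is precisely this last step: orbit-closedness is a priori a global condition, yet it must be certified from local data. The resolution rests on three structural facts that have to be assembled carefully, namely the algebraicity and $\theta$-stability of $G_\phi$ and of $I_{G_\phi}(H)$, which is what lets the real Kempf--Ness theory apply; the uniqueness of the gradient-flow limit; and the dictionary between critical points of $||m_{G_\phi}||^2$, minimal vectors, and closed orbits. The semisimplicity checkpoint furnished by Proposition \ref{prop: pre-einstein deriv and dropping semisimple condition} is equally essential, since it converts the nonlinear constraint defining $\phi$ into a linear computation followed by a single diagonalizability test; without it the claim of locality would fail.
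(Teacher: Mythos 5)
Your reduction to Nikolayevsky's closed-orbit criterion (Theorem \ref{thm: niko- Einstein nilrad vs. closed orbit}), your treatment of the pre-Einstein derivation via the linear system plus an after-the-fact semisimplicity test (Proposition \ref{prop: pre-einstein deriv and dropping semisimple condition}), and your idea of passing to the subgroup $I_{G_\phi}(H)$ of Proposition \ref{prop: iG(H)} all match the paper's Steps 1 and 3. But your actual closedness test --- run the negative gradient flow of $||m_{G_\phi}||^2$ to its unique limit $\mu_\infty$ and check whether $\mu_\infty \in G_\phi\cdot\mu$ --- is exactly the kind of global computation the theorem claims can be avoided: integrating a flow for infinite time and then deciding orbit membership is not an analysis of $Der(\mathfrak n)$ and infinitesimal deformations. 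The three ``structural facts'' you offer as the resolution (algebraicity and $\theta$-stability, uniqueness of the limit, the critical-point dictionary) do not convert this into a local criterion. The missing idea, which is the heart of the paper's Step 3, is the real Hilbert--Mumford criterion of Birkes \cite{Birkes:OrbitsofLinAlgGrps}: since the stabilizer of $I_{G_\phi}(H_\mu)$ at $\mu$ is \emph{finite} --- this, not mere computability, is the real reason for replacing $Z_{G_\phi}(H_\mu)$ by $I_{G_\phi}(H_\mu)$, because the limit of a non-closed one-parameter orbit is fixed by that one-parameter subgroup and so cannot lie in an orbit whose stabilizer is finite --- the orbit $I_{G_\phi}(H_\mu)\cdot\mu$ is closed if and only if the curve $\{\exp(tX)\cdot\mu \ | \ t\in\mathbb R\}$ is closed for every $X\in\mathcal D\cap\mathfrak i_{\mathfrak g_\phi}(\mathfrak h_\mu)$, and the latter is decided by writing $\mu=\sum a_i\mu_i$ in an eigenbasis of $X$ and checking the signs of the eigenvalues $\lambda_i$ with $a_i\neq 0$. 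That sign test is the local criterion; without it, nothing in your argument certifies closedness from local data.

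Two further points. First, you omit the paper's Step 2, the check that $\mathfrak h_\mu=\mathfrak g_\phi\cap Der(\mathfrak n)$ is reductive; this is both a stopping criterion (a closed orbit forces a reductive stabilizer, by \cite{RichSlow}) and a hypothesis needed before Proposition \ref{prop: iG(H)} and the Mostow conjugation making $\mathfrak h_\mu$ and $\phi$ simultaneously $\theta$-stable can be invoked at all. Second, your chain ``closed iff $\mu_\infty$ lies in the orbit, equivalently iff the orbit meets the critical set and so contains a minimal vector'' conflates distinguished orbits (critical points of $||m_{G_\phi}||^2$, possibly with nonzero critical value) with closed ones (zeros of $m_{G_\phi}$, i.e.\ minimal vectors). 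These notions genuinely differ: for the full $GL_n\mathbb R$-action no nonzero nilpotent bracket has a closed orbit, yet nilsoliton brackets have distinguished orbits (Theorem \ref{thm: nilsoliton vs disting point}). So even the backward implication in your first equivalence --- limit in the orbit implies the orbit is closed --- would require a separate argument for the $G_\phi$-action, which you do not supply.
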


\begin{remark} The existence of a nilsoliton being intrinsic to the Lie algebra was first shown by Nikolayevsky \cite{Nikolayevsky:EinsteinSolvmanifoldsandPreEinsteinDerivation}.  Here it was shown that the existence of such a metric is equivalent to an orbit of a particular reductive group being closed in the space of Lie brackets (see Theorem \ref{thm: niko- Einstein nilrad vs. closed orbit}).  However, it was not shown that this could be determined by measuring local data.

Before Nikolayevsky's result, it was shown by Lauret \cite{LauretNilsoliton} that the existence of such a metric is equivalent  to the full $GL_n\mathbb R$-orbit in the space of Lie brackets being so-called distinguished .  However, it was not known before the present work that this condition may be determined locally.
\end{remark}

\subsection*{Algorithm to determine if $N$ is an Einstein nilradical}

\noindent
\textbf{Step 1:} Find a solution $\phi$ to
    $$tr(\phi \psi)=tr(\psi)  \quad \mbox{ for all } \psi \in Der(\mathfrak n)$$

If the solution is $\phi$ is not semisimple (i.e. diagonalizable) with (positive) real eigenvalues  then stop, $\mathfrak n$ is not an Einstein nilradical.

If $\phi$ is semisimple with (positive) real eigenvalues, then continue; this is a  pre-Einstein derivation of $\mathfrak n$ (cf. Definition \ref{def: pre-einstein derivation} and Proposition \ref{prop: pre-einstein deriv and dropping semisimple condition}).  (Remark: positivity of the eigenvalues will be automatic if the following steps are valid.)

\bigskip

\noindent
\textbf{Step 2:} Consider the subalgebra $\mathfrak h_\mu := \mathfrak g_\phi \cap Der(\mathfrak n)$.  These are the derivations which are traceless and commute with $\phi$, see paragraph following Theorem \ref{thm: niko-EinsteinSolv}.

If $\mathfrak h_\mu$ is not reductive, then stop, $\mathfrak n$ is not an Einstein nilradical.

If $\mathfrak h_\mu$ is reductive then continue.\bigskip

To determine if this algebra is reductive: 1) compute its radical, then 2) compute the set of nilpotent elements of this radical.  The algebra is reductive if and only if the set of such nilpotent elements (in the radical) is trivial.\bigskip

\noindent
\textbf{Step 3:}  Consider the subalgebra $i_{\mathfrak g_\phi}(\mathfrak h_\mu) = \{X\in \mathfrak z_{\mathfrak g_\phi}(\mathfrak h_\mu) \ | \  tr(XY)=0 \mbox{ for all } Y\in \mathfrak z_{\mathfrak g_\phi}(\mathfrak h_\mu) \cap \mathfrak h_\mu \}$ (cf. Proposition \ref{prop: iG(H)}), where $\mathfrak z_a(b)$ denotes the centralizer of $b$ in $a$.  Let $\mathcal D$ denote the matrices of $\mathfrak{gl}_n\mathbb R$ which are diagonalizable over $\mathbb
R$; i.e., $\mathcal D =\displaystyle \bigcup_{g\in
GL_n\mathbb R} g\mathfrak t g^{-1}$, where $\mathfrak t =$ diagonal matrices of $\mathfrak
{gl}_n\mathbb R$.

Let $\mathfrak n = \mathfrak n_\mu$ corresponding to some point $\mu \in V = \wedge^2(\mathbb R^n)^*\otimes \mathbb R^n$ (see Section \ref{sec: variety of Lie brackets}).
For $X\in i_\mathfrak g(\mathfrak h_\mu) \cap \mathcal D$, write $\mu = \sum a_i\mu_i$, where $\mu_i$ is an eigen basis for $X$, i.e., $X\cdot \mu = \sum \lambda_i a_i\mu_i$.

If there is some $X\in i_{\mathfrak g_\phi}(\mathfrak h_\mu) \cap \mathcal D$ such that $\lambda_i \geq 0$ whenever $a_i\not = 0$, then $\mathfrak n$ is not an Einstein nilradical.

If for every $X$ above there exists $i$ with $\lambda_i <0$ and $a_i \not = 0$, 
then $\mathfrak n$ is an Einstein nilradical.

\begin{remark}In Step 3,

1) The identification of $\mathfrak n $ with $\mu\in V$ is made by picking a basis of the vector space.  This is tantamount to prescribing $\mathfrak n$ with an orthonormal basis, and hence, endowing $N$ with a choice of left-invariant metric.

2) The $X\cdot \mu$, with $X\in \mathfrak{gl}_n\mathbb R$, precisely represent infinitesimal deformations of the above choice of left-invariant metric.

3) The algebra $\mathfrak h_\mu$ is reductive (once getting to Step 3).  If the inner product from $\mathfrak n_\mu$ makes $\mathfrak h_\mu$  stable under the metric adjoint (and there will always be such a $\mu$ with this property), then Step 3 may be replaced by the following.
\end{remark}

\noindent
\textbf{Step 3':} Assuming $\mathfrak h_\mu$ is  stable under the adjoint relative to the inner product on $\mathfrak n_\mu$, we may reduce the collection of $X$ considered in Step 3 to those $X\in \mathfrak i_{\mathfrak g_\phi}(\mathfrak h_\mu) \cap \mathfrak p$, where $\mathfrak p =\{ Y\in \mathfrak h_\mu \ | Y^t = Y \}$. 

\begin{remark} The verification of Steps 1 and 2 above can done by a computer.  It is not immediately clear to the author if Step 3 can be adapted to be implemented by a computer.
\end{remark}

\subsection*{Proof of the algorithm above}
\textbf{Step 1:}  This is the content of Proposition \ref{prop: pre-einstein deriv and dropping semisimple condition}.\bigskip

\noindent
\textbf{Step 2:} To prove this portion of the algorithm, we will go ahead and identify $\mathfrak n$ with $\mathfrak n_\mu$, for some $\mu \in V$.  The algebra $\mathfrak h = \mathfrak g_\phi \cap Der(\mathfrak n)$ is precisely the stabilizer subalgebra of $\mathfrak g_\phi$ at $\mu$.  As we have fixed a basis of our Lie algebra, we may view $\mathfrak h \subset \mathfrak{gl}(n,\mathbb R)$.

In Theorem \ref{thm: niko- Einstein nilrad vs. closed orbit}, it was shown that $\mathfrak n_\mu$ is an Einstein nilradical if and only if $G_\phi \cdot \mu$ is closed, where $G_\phi$ is the (alegbraic) Lie group with Lie algebra $\mathfrak g_\phi$.  It is well-known that an orbit being closed implies the stabilizer subgroup is reductive, see \cite{RichSlow}.  Lastly, the stabilizer subgroup is reductive if and only if its Lie algebra $\mathfrak h$ is reductive.\bigskip

\noindent
\textbf{Step 3:} As $\mathfrak h$ and $\phi$ are reductive, there exists $g\in GL(n,\mathbb R)$ such that $g\mathfrak h g^{-1}$ and $g \phi g^{-1}$ are simultaneously $\theta$-stable, i.e. closed under the transpose operation, see \cite{Mostow:SelfAdjointGroups}.  Observe that
    \begin{quote}
    $gDer(\mu) g^{-1} = Der(g\cdot \mu)$,\\
    $g\phi g^{-1}$ is a pre-Einstein derivation of $g\cdot \mu$,\\
    $\mathfrak g_{g\phi g^{-1}} =g\mathfrak g_\phi g^{-1}$,\\
    $\mathfrak h_{g\cdot \mu} = g\mathfrak h_\mu g^{-1}$,\\
    $\mathfrak z_{\mathfrak g_{g\phi g^{-1}}} (\mathfrak h_{g\cdot \mu} ) = g \mathfrak z_{\mathfrak g_\phi} (\mathfrak h_\mu  ) g^{-1}$, and\\
    $\mathfrak i_{\mathfrak g_{g\phi g^{-1}}} (\mathfrak h_{g\cdot \mu} ) = g \mathfrak i_{\mathfrak g_\phi} (\mathfrak h_\mu  ) g^{-1}$
    \end{quote}
Moreover, $G_{g\phi g^{-1}}\cdot (g\cdot \mu) = gG_\phi g^{-1} g \mu = g (G_\phi \cdot \mu) $ is closed if and only if $G_\phi \cdot \mu$ is closed.  As such, we may reduce to the case that $\mathfrak h_\mu$ and $\phi $ are $\theta$-stable.

Since $\phi$ is $\theta$-stable, we immediately have that $\mathfrak g_\phi$ is $\theta$-stable.  Similarly, $\mathfrak z_{\mathfrak g_\phi}(\mathfrak h_\mu)$ is $\theta$-stable.  Now $\mathfrak i_{\mathfrak g_\phi} (\mathfrak h_\mu  )$ is precisely the Lie algebra of the algebraic reductive group $I_{G_\phi}(H_\mu)$ from Proposition \ref{prop: iG(H)}, where $H_\mu$ is the Lie group with Lie algebra $\mathfrak h_\mu$.

By Theorem \ref{thm: G dist iff ZGH dist}, $G_\phi \cdot \mu$ is closed if and only if $Z_{G_\phi}(H_\mu)$ is closed.  And since $I_{G_\phi}(H_\mu) \cdot \mu = Z_{G_\phi}(H_\mu)\cdot \mu$, we see that $\mathfrak n_\mu$ is an Einstein nilradical if and only if $I_{G_\phi}(H_\mu)\cdot \mu$ is closed, see Proposition \ref{prop: iG(H)} and Theorem \ref{thm: niko- Einstein nilrad vs. closed orbit}.

Observe that the stabilizer subalgebra of $\mathfrak i_{\mathfrak g_\phi}(\mathfrak h_\mu)$ at $\mu$ is trivial since it is contained in the stabilizer of $\mathfrak g_\phi$ at $\mu$ (which equals $\mathfrak h_\mu$) and $\mathfrak i_{\mathfrak g_\phi}(\mathfrak h_\mu)$ is orthogonal to $\mathfrak h_\mu$ under the inner product $\ip{A,B}=tr(AB^t)$.  Hence, the stabilizer of $I_{G_\phi}(H_\mu)$ is finite (as it is discrete and algebraic).\bigskip

As the stabilizer of $I_{G_\phi}(H_\mu)$ at $\mu$ is finite, we may apply the `Hilbert-Mumford criterion' to determine when $I_{G_\phi}(H_\mu) \cdot \mu$ is closed.  This criterion was adapted to the real setting in \cite{Birkes:OrbitsofLinAlgGrps} which states (in our setting)
    \begin{quote}$I_{G_\phi}(H_\mu)\cdot \mu$ is closed if and only if $\displaystyle \bigcup_{t\in \mathbb R} exp(tX)\cdot \mu$ is closed  for all $X\in \mathcal D \cap \mathfrak i_{\mathfrak g_\phi}(\mathfrak h_\mu)$
    \end{quote}
Roughly speaking, this criterion says that closedness of an orbit is equivalent to closedness of the orbits of all algebraic reductive 1-parameter subgroups.

To finish, we write $exp(tX)\cdot \mu = expt(tX) \sum a_i\mu_i = \sum e^{t\lambda_i} a_i\mu_i$ where $\mu_i$ is the eigenvector of $X$ above.  This set is not closed if and only if for all $i$ such that $a_i\not = 0$, either all $\lambda_i\geq 0 $ or $\lambda_i \leq 0$.  Observe that replacing $X$ with $-X$ changes the sign of the eigenvalues above and this step is proven.\bigskip

\noindent
\textbf{Step 3':} Reducing the Hilber-Mumford criterion to this smaller set of symmetric elements of $\mathfrak h_\mu$ is the content of \cite{RichSlow}.

\section{Algorithm to determine if a solvable Lie group admits a left-invariant Einstein metric}\label{sec: alg for Einstein metric}
In this section, we show that the existence of an Einstein metric on a solvable Lie group can be determined by purely local data, as in the case of nilsolitons and nilpotent Lie groups.
A similar algorithm can be written to test for the existence of solsoliton metrics.

\begin{thm}\label{thm: existence of Einstein of solvable} Let $S$ be a solvable Lie group with Lie algebra $\mathfrak s$.  The existence of a left-invariant Einstein metric on $S$ can be determined by analyzing the following: 1) adjoint action of $\mathfrak s$ on itself, 2) the commutator subalgebra $\mathfrak n = [\mathfrak s,\mathfrak s]$, 
and 3) infinitesimal deformations of any initial metric on $\mathfrak n$.
\end{thm}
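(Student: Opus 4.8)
The plan is to reduce the global existence question to the listed algebraic and infinitesimal data by first splitting along the scalar-curvature dichotomy and then invoking the structural results already assembled. By the Alekseevskii--Kimel'fel'd--Dotti theorem, any Einstein metric on a solvable group has scalar curvature whose sign is determined by unimodularity: zero, hence flat, when $S$ is unimodular, and negative when $S$ is non-unimodular. Unimodularity is read off the adjoint action, since $tr\ ad\ X$ determines it, so the first step is to compute $tr\ ad\ X$ and branch.

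In the unimodular case I would appeal to Milnor's classification of flat Riemannian Lie groups (Theorem \ref{thm: Milnor flat}) and its algebraic reformulation (Proposition \ref{prop: classification of solv admitting flat}): $S$ admits an Einstein (necessarily flat) metric if and only if its nilradical is abelian and a complement acts by an abelian reductive family of derivations all of whose eigenvalues are purely imaginary. Every ingredient here --- the nilradical, whether it is abelian, and the eigenvalues and reductivity of the operators $ad\ A$ --- is visible from the adjoint action of $\mathfrak s$ on itself, so this case is settled by item (1) alone.

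The substantive case is non-unimodular, where we seek a negative Einstein metric; here I would set $\mathfrak n = [\mathfrak s,\mathfrak s]$. First observe that if $S$ admits such a metric then, by the standard structure of Heber and Lauret (Theorem \ref{thm: lauret solsolitons criteria}) together with the positivity of the eigenvalues of the pre-Einstein derivation, $\mathfrak n$ coincides with the nilradical: the pre-Einstein derivation lies in $\mathfrak a$ and acts invertibly on the nilradical, so $[\mathfrak s,\mathfrak s]$ already exhausts it. Thus $\mathfrak n = [\mathfrak s,\mathfrak s]$ is the correct nilpotent algebra to analyze. By Corollary \ref{cor: classification of solv admitting negative Einstein}, $\mathfrak s$ admits a negative Einstein metric exactly when (i) $\mathfrak n$ is an Einstein nilradical and (ii) a complement $\mathfrak a$ to $\mathfrak n$ acts on $\mathfrak n$ as an abelian reductive subalgebra of $Der(\mathfrak n)$, no element of which has only purely imaginary eigenvalues, and which contains a pre-Einstein derivation. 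Condition (i) is decided precisely by the nilsoliton algorithm of Section \ref{sec: alg for nilsoliton} (Theorem \ref{thm: existence of nilsoliton}), using only $Der(\mathfrak n)$ and infinitesimal deformations of an arbitrary metric on $\mathfrak n$, which are items (2) and (3). For condition (ii) I would compute the pre-Einstein derivation by solving the linear system $tr(\phi\psi)=tr\ \psi$ over $\psi\in Der(\mathfrak s)$, dropping the semisimplicity requirement by Proposition \ref{prop: pre-einstein deriv and dropping semisimple condition} and invoking Theorem \ref{thm: niko-EinsteinSolv} for its existence, uniqueness up to automorphism, and rationality; then, reading the operators $(ad\ A)|_{\mathfrak n}$ off the adjoint action (item (1)), check that their span is abelian, can be placed in $\theta$-stable position via Mostow's theorem, has no element with only purely imaginary eigenvalues, and contains $\phi$. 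If every check passes, Corollary \ref{cor: classification of solv admitting negative Einstein} produces the metric; if $\mathfrak n$ fails to be an Einstein nilradical, or $[\mathfrak s,\mathfrak s]$ is not the full nilradical, or the abelian, eigenvalue, or pre-Einstein conditions fail, then no Einstein metric exists.

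The main obstacle is conceptual rather than computational: a priori the existence of an Einstein metric is the global vanishing of $\|Ric - \tfrac{sc}{n}Id\|^2$ somewhere on the open cone of inner products, and it is exactly the content of the theorem that this can be replaced by finitely many algebraic checks. This reduction rests on the rigidity supplied by the standard property (Theorem \ref{thm: lauret solsolitons criteria}) and on the validity of the nilsoliton algorithm, which itself depends on the Hilbert--Mumford criterion and the stratification of the variety of brackets. The genuinely delicate step within the present argument is condition (ii): one must guarantee that the abelian reductive complement required by Corollary \ref{cor: classification of solv admitting negative Einstein} can be detected, and the pre-Einstein derivation matched, from the adjoint representation and the nilsoliton data on $\mathfrak n$ alone, rather than by searching over complements and inner products. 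The $\theta$-stable positioning via Mostow and the finiteness built into Proposition \ref{prop: iG(H)} are what render this feasible.
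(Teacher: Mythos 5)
Your overall architecture matches the paper's: split along the flat/negative dichotomy, settle the flat (unimodular) case by Milnor's theorem and Proposition \ref{prop: classification of solv admitting flat}, and in the non-unimodular case reduce to the characterization of Corollary \ref{cor: classification of solv admitting negative Einstein}, testing the nilradical $\mathfrak n=[\mathfrak s,\mathfrak s]$ with the algorithm of Section \ref{sec: alg for nilsoliton}. The gap is in how you verify condition (ii), and it is exactly the step you flag as ``delicate'' but never resolve. First, the pre-Einstein derivation relevant to Corollary \ref{cor: classification of solv admitting negative Einstein} is a derivation of $\mathfrak n$, so the trace system should run over $\psi\in Der(\mathfrak n)$, not $Der(\mathfrak s)$; more importantly, solving that system over all of $Der(\mathfrak n)$ and then testing whether \emph{that particular} solution lies in the span of the operators $(ad\ A)|_{\mathfrak n}$ can produce false negatives. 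Pre-Einstein derivations are unique only up to $Aut(\mathfrak n)$, and the solution set of the linear system is an affine subspace; the solution your linear-algebra routine returns need not be the one realized by the adjoint action of $\mathfrak s$ even when one exists. The paper avoids this by imposing the constraint from the start: its Step 2 solves $tr(\phi\psi)=tr\,\psi$, $\psi\in Der(\mathfrak n)$, \emph{within the subspace} $ad\ \mathfrak s|_{\mathfrak n}\subset Der(\mathfrak n)$, which is still a linear problem and yields an element $X_\phi\in\mathfrak s$.

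Second, your test ``their span is abelian \dots and contains $\phi$'' is performed on ``a complement $\mathfrak a$ to $\mathfrak n$,'' but this span is not complement-independent: replacing $A$ by $A+n$ with $n\in\mathfrak n$ changes $(ad\ A)|_{\mathfrak n}$ by an inner derivation, so an arbitrary complement will generically fail the abelian test even when a good complement exists, and searching over all complements is precisely what a ``local'' procedure must avoid. The paper's key device, which is absent from your proposal, is to make the complement canonical once $X_\phi$ is in hand: take $\mathfrak a:=\mathfrak z_{\mathfrak s}(X_\phi)$, the centralizer of $X_\phi$ in $\mathfrak s$ (computable from the adjoint action), check that it is abelian with $\dim \mathfrak z_{\mathfrak s}(X_\phi)+\dim\mathfrak n=\dim\mathfrak s$, and then check that no element of it has only purely imaginary eigenvalues; Lauret's structure theorem (\cite[Theorem 4.8]{Lauret:SolSolitons}) guarantees that when an Einstein metric exists this centralizer \emph{is} the required complement, and Theorem \ref{thm: classification of solv admitting negative Einstein} converts the eigenvalue check into existence. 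The tools you cite to close the gap --- Mostow's $\theta$-stable positioning and Proposition \ref{prop: iG(H)} --- do not do this job: the former only conjugates a reductive algebra into symmetric position after it has been identified, and the latter concerns the centralizer group used inside the nilsoliton algorithm, not the selection of $\mathfrak a$ inside $\mathfrak s$.
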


\subsection*{Flat Einstein metrics}
Here we prove Theorem \ref{thm: existence of Einstein of solvable} in the case that scalar curvature is zero (such a Lie algebra is necessarily unimodular).  This amounts to showing that the solvable Lie algebra in question has the rigid algebraic structure described by Milnor, see Proposition \ref{prop: classification of solv admitting flat}.  Note, this case does not require any infinitesimal deformations of metrics on $\mathfrak n$.

Consider the adjoint action $ad\ \mathfrak s \subset Der(\mathfrak s)$ on $\mathfrak s$.  Compute the nilradical $\mathfrak n$ of $\mathfrak s$, i.e., the set of nilpotent elements.  Compute a Levi decomposition $ad\ \mathfrak s = \mathfrak T + \mathfrak N$, and let $\mathfrak t \subset \mathfrak s$ be such that $ad\ \mathfrak t=\mathfrak T$ and $\dim \mathfrak t = \dim \mathfrak T$.

\begin{lemma}If $\mathfrak s$ admits a flat metric, then $\mathfrak t$ is an abelian subalgebra, $ad\ T$ has only purely imaginary eigenvalues for $T\in \mathfrak t$, and $\dim \mathfrak t+ \dim \mathfrak n = \dim \mathfrak s$.
\end{lemma}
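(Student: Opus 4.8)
The plan is to read the three assertions as a re-encoding of Milnor's structure theorem (Theorem \ref{thm: Milnor flat}) in terms of the algorithmic data $\mathfrak n, \mathfrak T, \mathfrak N, \mathfrak t$. Since $\mathfrak s$ admits a flat metric, Milnor's theorem supplies an orthogonal splitting $\mathfrak s = \mathfrak a \oplus \mathfrak n$ in which $\mathfrak n$ is an abelian ideal equal to the nilradical and $\mathfrak a$ is abelian with $ad\ A$ skew-symmetric for every $A \in \mathfrak a$; as the nilradical is intrinsic, this $\mathfrak n$ is exactly the one the algorithm computes. First I would record the two facts used throughout: for $A\in\mathfrak a$ the operator $ad\ A$ is semisimple with purely imaginary eigenvalues, while for $X\in\mathfrak n$ one checks $(ad\ X)^2=0$ so $ad\ X$ is nilpotent, and these two families meet only in $0$ inside $\mathfrak{gl}(\mathfrak s)$ since an operator that is simultaneously semisimple and nilpotent vanishes. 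This exhibits $ad\ \mathfrak s = ad\ \mathfrak a \oplus ad\ \mathfrak n$ as a Levi-type decomposition whose torus is $\mathfrak T_0 := ad\ \mathfrak a$ and whose nilradical is $\mathfrak N = ad\ \mathfrak n$.

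For the dimension and eigenvalue claims the argument should be routine. The center $Z(\mathfrak s)$ is an abelian ideal, hence contained in the nilradical $\mathfrak n$; therefore $\mathfrak a \cap Z(\mathfrak s) \subseteq \mathfrak a \cap \mathfrak n = 0$, so $ad|_{\mathfrak a}$ is injective and $\dim \mathfrak T_0 = \dim \mathfrak a$. Since the algorithm's torus $\mathfrak T$ is conjugate to $\mathfrak T_0$ (all maximal tori of $ad\ \mathfrak s$ are conjugate), this gives $\dim \mathfrak t = \dim \mathfrak T = \dim \mathfrak a$, and combined with the orthogonal splitting $\dim \mathfrak s = \dim \mathfrak a + \dim \mathfrak n$ it yields $\dim \mathfrak t + \dim \mathfrak n = \dim \mathfrak s$. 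For eigenvalues, every element of $\mathfrak T_0$ is skew-symmetric and hence has purely imaginary eigenvalues; conjugation preserves eigenvalues, so every element of $\mathfrak T = ad\ \mathfrak t$ does too, which is exactly the assertion that $ad\ T$ has purely imaginary eigenvalues for $T\in\mathfrak t$.

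The main obstacle will be the claim that $\mathfrak t$ is an \emph{abelian subalgebra}, because a priori $\mathfrak t$ is only a vector-space lift of $\mathfrak T$ and $[\mathfrak t,\mathfrak t]$ is only seen to land in $Z(\mathfrak s)$ rather than visibly vanishing. My plan here is to pass to the full preimage $\widetilde{\mathfrak t} := ad^{-1}(\mathfrak T)$, a genuine subalgebra containing both $\mathfrak t$ and $Z(\mathfrak s)$, and to show $\widetilde{\mathfrak t}$ is abelian; any lift $\mathfrak t$ then inherits this. For the reference torus it is immediate, since $ad^{-1}(\mathfrak T_0) = \mathfrak a \oplus Z(\mathfrak s)$ with $\mathfrak a$ abelian and $Z(\mathfrak s)$ central. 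To transfer this to the algorithm's $\mathfrak T$, I would use that maximal tori of the solvable algebra $ad\ \mathfrak s$ are conjugate by the unipotent part, i.e. by $\exp(ad\ X)$ with $X\in\mathfrak n$, and that such a conjugation is realized by the inner automorphism $\exp(ad\ X)\in Aut(\mathfrak s)$; hence $\widetilde{\mathfrak t}=ad^{-1}(\mathfrak T)$ is the image of $\mathfrak a \oplus Z(\mathfrak s)$ under an automorphism of $\mathfrak s$ and is therefore abelian. The delicate points to verify carefully are precisely this realization of the torus-conjugacy by an automorphism of $\mathfrak s$ (equivalently, that the relevant unipotent conjugations descend from $GL(\mathfrak s)$ to $Aut(\mathfrak s)$) and the confirmation that $\mathfrak T_0 = ad\ \mathfrak a$ is a maximal torus of $ad\ \mathfrak s$, so that the conjugacy statement applies to the algorithm's $\mathfrak T$.
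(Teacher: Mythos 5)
Your proposal is correct and takes essentially the same route as the paper: decompose $\mathfrak s = \mathfrak a \oplus \mathfrak n$ via Milnor's theorem, observe that $ad\ \mathfrak a$ and $\mathfrak T$ are both maximal reductive subalgebras (tori) of the Levi-type decomposition of $ad\ \mathfrak s$, invoke Mostow-type conjugacy realized by automorphisms of $\mathfrak s$, and transfer the properties of $\mathfrak a$ to $\mathfrak t$ across the central ambiguity. Your preimage trick $ad^{-1}(\mathfrak T)$ and the explicit realization of the conjugacy by inner automorphisms $\exp(ad\ X)$ with $X \in \mathfrak n$ are simply more detailed versions of the paper's terser assertions that the conjugation lies in $Aut(\mathfrak s)$ and that elements of $\mathfrak t$ differ from elements of $\mathfrak a$ only by elements of the center.
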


Proving this lemma proves the theorem as verifying the conditions on $\mathfrak t$ in the lemma amount to simply analyzing the adjoint representation of $\mathfrak s$, and  any algebra of this type admits a flat metric by Proposition \ref{prop: classification of solv admitting flat}.

\begin{proof}[Proof of lemma] Assume $\mathfrak s$ admits a flat Einstein metric.  Decompose $\mathfrak s =\mathfrak a + \mathfrak n$ where $\mathfrak n$ is the nilradical and $\mathfrak a$ is an abelian subalgebra such that $ad\ A$ has only purely imaginary eigenvalues for $A\in \mathfrak a$, cf. Proposition \ref{prop: classification of solv admitting flat}.

Observe that $ad\ \mathfrak s =ad\ \mathfrak a + ad\ \mathfrak n$ is a Levi-decomposition of $ad\ \mathfrak s$. Thus $ad\ \mathfrak a$ and $\mathfrak T = ad\ \mathfrak t$ are equal up to conjugation by $Aut(\mathfrak s)$ as they are both maximal reductive subalgebras of $ad\ \mathfrak s$ (conjugacy of such subalgebras is the main result of \cite{Mostow:FullyReducibleSubgrpsOfAlgGrps}).  As the relevant properties of $\mathfrak a$ do not change after applying an automorphism, we may assume $ad\ \mathfrak a = ad\ \mathfrak t$.  Now, the elements of $\mathfrak t$ differ from the elements of $\mathfrak a$ by only elements of the center.  Hence $\mathfrak t$ has precisely the same properties of $\mathfrak a$ and the lemma is proven.\end{proof}

\subsection*{Negative Einstein metrics}
Here we prove Theorem \ref{thm: existence of Einstein of solvable} in the case that scalar curvature is negative (such a Lie algebra is necessarily non-unimodular). Let $S$ be the solvable group in question with Lie algebra $\mathfrak s$.  Denote by  $\mathfrak n$ the commutator subalgebra $[\mathfrak s, \mathfrak s]$ of $\mathfrak s$.  (Note: when $\mathfrak s$ admits an Einstein metric, this will be the full nilradical.)

\bigskip

\noindent
\textbf{Step 1:}
    \begin{quote}
    If $\mathfrak n$ is not an Einstein nilradical, then stop, $S$ cannot admit a negative Einstein metric.\\
    If $\mathfrak n$ is an Einstein nilradical, then continue.
    \end{quote}
This step can be determined using the algorithm of Section \ref{sec: alg for nilsoliton}.\bigskip

\noindent
\textbf{Step 2:} Find a solution $\phi$ to
    $$tr(\phi \psi)=tr(\psi)  \quad \mbox{ for all } \psi \in Der(\mathfrak n)$$
within the set $ad\ \mathfrak s=\{ ad\ X \ | \ X\in \mathfrak s \} \subset Der(\mathfrak n)$.
    \begin{quote}
    If there is no non-trivial solution in this subset, or the solution is not semisimple with (positive) real eigenvalues, then stop; $S$ cannot admit a negative Einstein metric.\\
    If there is a non-trivial solution $\phi = ad\ X_\phi$, and this solution is semisimple with (positive) real eigenvalues, then continue.  Fix this choice of $X_\phi$.
    \end{quote}
This step can be verified using a computer for a given solvable Lie algebra of interest. As before, positivity of the eigenvalues will follow if the remaining steps are valid.\bigskip

\noindent
\textbf{Step 3:} Compute $\mathfrak z_{\mathfrak s}(X_\phi)=\{Y\in \mathfrak s \ | \ [Y, X_\phi ]=0 \}$
    \begin{quote}
    If $\mathfrak z_{\mathfrak s}(X_\phi)$ is not 
    abelian or $\dim \mathfrak z_{\mathfrak s}(X_\phi) + \dim \mathfrak n < \dim \mathfrak s$, then stop, $S$ does not admit a negative Einstein metric.\\
    If $\mathfrak z_{\mathfrak s}(X_\phi)$ is 
    abelian, and  $\dim \mathfrak z_{\mathfrak s}(X_\phi) + \dim \mathfrak n = \dim \mathfrak s$, then continue.
    \end{quote}
Recall, $\mathfrak z_{\mathfrak s}(X_\phi)$ is automatically reductive as $X_\phi$ is reductive, and $\mathfrak z_{\mathfrak s}(X_\phi)$ being reductive abelian implies that no element is nilpotent.  This step may be verified using a computer.  \bigskip

\noindent
\textbf{Step 4:}
    \begin{quote}
    If some element of $\mathfrak z_{\mathfrak s}(X_\phi)$ has only purely imaginary eigenvalues, then stop; $S$ does not admit a negative Einstein metric.\\
    If no element of $\mathfrak z_{\mathfrak s}(X_\phi)$ has only purely imaginary eigenvalues, then $S$ admits a negative Einstein metric.
    \end{quote}

\subsection*{Proof of the algorithm above.}

\noindent
\textbf{Step 1:} This fact is well-known, see \cite{LauretStandard}.\bigskip

\noindent
\textbf{Step 2:}  This is the content of a theorem of Nikolayevsky, see Theorem \ref{thm: niko-EinsteinSolv}, and \cite[Proposition 4.3]{Lauret:SolSolitons}.\bigskip

\noindent
\textbf{Step 3:} This follows immediately from \cite[Theorem 4.8]{Lauret:SolSolitons}.\bigskip

\noindent
\textbf{Step 4:} This is the content of Theorem \ref{thm: classification of solv admitting negative Einstein}.

\appendix
\section{Appendix: Closed orbits for general representations.}\label{sec: appendix}
The above work concerning the geometry of orbits holds in the more general framework of representations of reductive groups.  We state this result and provide only a sketch of the proof, as the proof is similar to the above case.  We do not know of this statement appearing in the literature before.

\subsection*{Closed Orbits}
\begin{thm} Let $G$ be a real reductive algebraic group acting linearly and rationally on a vector space $V$.  Determining whether an orbit $G\cdot v$ is closed in $V$ can be determined using only data from the induced representation of the Lie algebra $\mathfrak g$ at the point $v\in V$.
\end{thm}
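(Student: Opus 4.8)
The plan is to show that the argument behind the Einstein-nilradical algorithm of Section \ref{sec: alg for nilsoliton} (its Steps 2 and 3) never used anything specific to the representation $V=\wedge^2(\mathbb R^n)^*\otimes\mathbb R^n$, so that it transcribes to an arbitrary rational representation. First I would realize $G$ inside some $GL(N,\mathbb R)$ through the given representation $\pi$; its image is a real reductive algebraic subgroup, and by Mostow \cite{Mostow:SelfAdjointGroups} I may conjugate so that $G$ is $\theta$-stable. All the machinery of Sections \ref{sec: bracket flow} and \ref{sec: stratifying V} is then available: the moment map $m_G$, the function $F=\|m_G\|^2$, and the structural results quoted there. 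The foundational fact I would rely on is that $G\cdot v$ is closed if and only if it contains a zero of $m_G$ (a minimal vector), i.e.\ if and only if it is distinguished with critical value $0$; see \cite{RichSlow,Marian} and Theorem \ref{thm: limit under grad flow}.

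Next I would run the local reduction in two preliminary steps. The stabilizer $H=G_v$ has Lie algebra $\mathfrak g_v=\{X\in\mathfrak g : \pi(X)v=0\}$, which is manifestly read off from the infinitesimal action at $v$. A closed orbit forces $H$ to be reductive (Matsushima--Richardson \cite{RichSlow}), and reductivity of $\mathfrak g_v$ is a local computation: find its radical and check that the radical has no nonzero nilpotent element. If $H$ is not reductive, $G\cdot v$ is not closed. If it is reductive, conjugate (again by Mostow) so that $\mathfrak h$ is $\theta$-stable, form the centralizer $Z_G(H)$, and pass to the reductive subgroup $I_G(H)$ of Proposition \ref{prop: iG(H)}. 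This group is $\theta$-stable, satisfies $I_G(H)\cdot v = Z_G(H)\cdot v$, has finite stabilizer at $v$, and by Theorem \ref{thm: G dist iff ZGH dist} carries $m_G=m_{Z_G(H)}=m_{I_G(H)}$ along this orbit.

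The crux is the equivalence: $G\cdot v$ is closed if and only if $I_G(H)\cdot v = Z_G(H)\cdot v$ is closed. One direction is immediate, since a zero $w$ of $m_{Z_G(H)}$ in $Z_G(H)\cdot v$ is, by the identity $m_G=m_{Z_G(H)}$, a zero of $m_G$ in $G\cdot v$, so $G\cdot v$ is closed. For the converse, if $G\cdot v$ is closed it is distinguished, so Theorem \ref{thm: G dist iff ZGH dist} provides a critical point $w$ of $\|m_{Z_G(H)}\|^2$ in $Z_G(H)\cdot v$; since $w$ also lies in the closed orbit $G\cdot v$, Theorem \ref{thm: limit under grad flow} forces $\|m_G(w)\|^2=\min_{G\cdot v}\|m_G\|^2=0$, whence $m_{Z_G(H)}(w)=m_G(w)=0$ and $Z_G(H)\cdot v$ is closed. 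Finally, because $I_G(H)$ is reductive with finite stabilizer at $v$, the real Hilbert--Mumford criterion of Birkes \cite{Birkes:OrbitsofLinAlgGrps} applies: $I_G(H)\cdot v$ is closed if and only if, for every $X\in\mathcal D\cap\mathfrak i_G(H)$, the one-parameter orbit $\{\exp(tX)\cdot v\}$ is closed. Writing $v=\sum a_i v_i$ in an eigenbasis of $\pi(X)$ with $\pi(X)v_i=\lambda_i v_i$, this one-parameter orbit fails to be closed exactly when the $\lambda_i$ with $a_i\neq 0$ all share one weak sign. As each $\pi(X)v$ is an infinitesimal deformation of $v$, this is data of the Lie algebra action at $v$ alone.

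I expect the main obstacle to be the descent equivalence of the previous paragraph: one must combine Theorem \ref{thm: G dist iff ZGH dist} with the fact (Theorem \ref{thm: limit under grad flow}) that every critical point inside a closed orbit is already a minimal vector, and one must arrange the $\theta$-stability needed to invoke those theorems by a simultaneous conjugation that does not disturb closedness of the orbit --- precisely the bookkeeping carried out in the proof of Step 3 in Section \ref{sec: alg for nilsoliton}. The remaining ingredients, namely the reductivity test and the eigenvalue reading, are routine and identical to the nilpotent case, which is why the theorem follows with essentially the same proof.
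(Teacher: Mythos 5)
Your proposal is correct and takes essentially the same route as the paper's sketch: test reductivity of the stabilizer $\mathfrak g_v$, pass to $Z_G(H)$ and then to the finite-stabilizer group $I_G(H)$ of Proposition \ref{prop: iG(H)}, and finish with Birkes' real Hilbert--Mumford criterion and the eigenvalue reading --- precisely the Section \ref{sec: alg for nilsoliton} argument that the appendix says to follow. The only cosmetic differences are that you arrange $\theta$-stability by embedding via $\pi$ and conjugating (Mostow) so as to use the trace form, whereas the paper works intrinsically with the Killing form (for semisimple $G$, or a suitable invariant form in the reductive case) to define $\mathfrak i_{\mathfrak g}(\mathfrak h)$, and that your explicit moment-map justification of the descent equivalence (closedness of $G\cdot v$ versus closedness of $Z_G(H)\cdot v$) spells out what the paper leaves implicit in its citations of Theorems \ref{thm: G dist iff ZGH dist} and \ref{thm: limit under grad flow}.
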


In the following, we will only consider $G$ which is semi-simple and use the Killing form $B$ which is $Ad(G)$-invariant and symmetric.  More generally, for  a reductive group, one could use any bilinear form $B:\mathfrak g\times \mathfrak g \to \mathfrak g$ which is $Ad(G)$-invariant, symmetric, and has the property that $\{ X\in \mathfrak g \ | \ [X,\alpha]=0 \quad and \quad B(X,\alpha)=0\}$ is the Lie algebra of an algebraic group for any $\alpha \in \mathfrak g$ which is tangent to a reductive, algebraic 1-parameter subgroup.  The Killing form satisfies this condition.

\begin{proof}[Sketch of proof]  We follow the same argument as in Section 9.\bigskip

The first requirement is that $\mathfrak h = \mathfrak g_v$ be reductive.  Let $\mathfrak z_\mathfrak g(\mathfrak h)=\{X\in \mathfrak g \ | \ [X,\mathfrak h]=0\}$ denote the centralizer of $\mathfrak h$ in $\mathfrak g$. As before, consider
    $$\mathfrak i_\mathfrak g (\mathfrak h) = \{ X\in \mathfrak z_\mathfrak g(\mathfrak h) \ | \ B(X,Y)=0 \mbox{ for all } Y\in \mathfrak z_\mathfrak g(\mathfrak h) \cap \mathfrak h \}$$
These subalgebras are the Lie algebras of algebraic groups $Z_G(H)$ and $I_G(H)$, respectively, where $H=G_v$.  The orbit $G\cdot v$ is closed if and only if $Z_G(H)\cdot v = I_G(H)\cdot v$ is closed.  As $I_G(H)$ has finite stabilizer and we may apply the Hilbert-Mumford criterion.

Let $\mathcal D$ denote the matrices of $\mathfrak{gl}_n\mathbb R$ which are diagonalizable over $\mathbb R$; i.e., $\mathcal D =\displaystyle \bigcup_{g\in GL_n\mathbb R} g\mathfrak t g^{-1}$, where $\mathfrak t =$ diagonal matrices of $\mathfrak{gl}_n\mathbb R$.  Given $X\in \mathfrak i_\mathfrak g(\mathfrak h) \cap \mathcal D$, write $v=\sum a_i v_i$ where $\{v_i\}$ is an eigenvector basis of $V$ with $X\cdot v_i=\lambda_iv_i$.  The Hilbert-Mumford criterion states: $I_G(H)\cdot v$ is not closed if and only if there exists $X\in \mathfrak i_\mathfrak g(\mathfrak h)$ satisfying $\lambda_i \geq 0$ for all $i$ such that $a_i \not =0$.

In this way, we see that determining the closedness of $G\cdot v$ reduces to analyzing the stabilizer subalgebra $\mathfrak g_v$ and the representation of $\mathfrak g$ at $v$.
\end{proof}

\small
\providecommand{\bysame}{\leavevmode\hbox to3em{\hrulefill}\thinspace}
\providecommand{\MR}{\relax\ifhmode\unskip\space\fi MR }
\providecommand{\MRhref}[2]{%
  \href{http://www.ams.org/mathscinet-getitem?mr=#1}{#2}
}
\providecommand{\href}[2]{#2}

\end{document}